\numberwithin{equation}{section}
\newtheorem{theorem}{Theorem}[section]
\newtheorem{lemma}[theorem]{Lemma}
\newtheorem{proposition}[theorem]{Proposition}
\theoremstyle{definition}
\newcommand{\bean}{\begin{eqnarray*}}
\newcommand{\eean}{\end{eqnarray*}}
\newcommand{\be}{\begin{equation}}
\newcommand{\ee}{\end{equation}}
\newcommand{\bd}{\begin{displaymath}}
\newcommand{\ed}{\end{displaymath}}
\newcommand{\ep}{\varepsilon}
\newcommand{\cals}{{\mathcal{S}}}
\newcommand{\beq}{\begin{equation}}
\newcommand{\eeq}{\end{equation}}
\newcommand{\bea}{\begin{eqnarray}}
\newcommand{\eea}{\end{eqnarray}}
\newcommand{\EUep}{E_\ep}
\newcommand{\R}{\mathbb{R}}
\newcommand{\e}{\varepsilon}
\newcommand{\C}{\mathbb{C}}
\newcommand{\calF}{\mathcal{F}}
\newcommand{\calV}{\mathcal{V}}
\def\logeps{{|\!\log\varepsilon|}}
\newcommand{\pp}{\partial}
\newcommand{\Z}{{\mathbb Z}}
\newcommand{\N}{{\mathbb N}}
\newcommand{\mass}{{\bf M}}
\newcommand{\calB}{{\mathcal{B}}}
\newcommand{\calD}{{\mathcal{D}}}
\newcommand{\calR}{{\mathcal{R}}}
\newcommand{\calL}{{\mathcal{L}}}
\newcommand{\calG}{{\mathcal{G}}}
\newcommand{\calH}{{\mathcal{H}}}
\newcommand{\calM}{{\mathcal{M}}}
\newcommand{\dist}{\operatorname{dist}}
\newcommand{\spt}{\operatorname{spt}}
\def\rest{\hskip 1pt{\hbox to 10.8pt{\hfill
\vrule height 7pt width 0.4pt depth 0pt\hbox{\vrule height 0.4pt
width 7.6pt depth 0pt}\hfill}}}
\newcommand{\vol}{\,\mbox{vol}}
\definecolor{darkgreen}{rgb}{0,0.55,0} 
\title{Solutions of the Ginzburg-Landau equations with vorticity concentrating near a nondegenerate geodesic}
\date{\today}
\begin{document}
\renewcommand\Authfont{\small}
\renewcommand\Affilfont{\itshape\footnotesize}

\author[1]{Andrew Colinet\footnote{andrew.colinet@mail.utoronto.ca}}
\author[1]{Robert L. Jerrard\footnote{rjerrard@math.toronto.edu}}
\author[2]{Peter Sternberg\footnote{sternber@indiana.edu}}
\affil[1]{Department of Mathematics, University of Toronto, Toronto, ON M5S 2E4 Canada}
\affil[2]{Department of Mathematics, Indiana University, Rawles Hall
831 East 3rd St.,
Bloomington, IN 47405}

\maketitle

\noindent {\bf Abstract}
It is well-known that under suitable hypotheses, for a sequence
of solutions of the (simplified) Ginzburg-Landau equations
$-\Delta u_\ep +\ep^{-2}(|u_\ep|^2-1)u_\ep = 0$,
the energy and vorticity concentrate as $\ep\to 0$ around a 
codimension $2$ stationary varifold --- a (measure theoretic) minimal surface.
Much less is known about the question of whether, given a 
codimension $2$ minimal surface, there exists a sequence of
solutions  for  which the given minimal surface is
the limiting concentration set.  The corresponding question
is very well-understood for minimal hypersurfaces and the 
scalar Allen-Cahn equation, and for the Ginzburg-Landau equations
when the minimal surface is locally area-minimizing, but otherwise 
quite open.

We consider this question on a $3$-dimensional closed Riemannian manifold $(M,g)$,
and we prove that any embedded nondegenerate closed geodesic can be realized as the asymptotic energy/vorticity concentration set of a sequence of solutions of the 
Ginzburg-Landau equations.


\section{Introduction}

In this paper we construct certain geometrically meaningful solutions of the Ginzburg-Landau equations
\beq\label{GL}
 - \Delta u_\ep + \frac 1{\ep^2}(|u_\ep|^2-1)u_\ep = 0 
\eeq
for $u_\ep:M\to \C$, where $(M,g)$ is a closed $n$-dimensional Riemannian manifold, 
with $n=3$ in our main results.
Such solutions are critical points of the Ginzburg-Landau functional
\[
\EUep(u_\ep) := \frac 1{\pi\logeps}\int_M e_\ep(u_\ep) \vol_g, \qquad 
e_\ep(u_\ep) := \frac 12 |\nabla u_\ep|^2
+ \frac 1{4\ep^2}(|u_\ep|^2-1)^2.
\]
If  $M$ is simply connected, then
given a sequence of solutions $(u_\ep)$ of \eqref{GL} 
satisfying the energy bound 
\beq\label{uniformE}
\EUep(u_\ep)\le C,
\eeq 
the rescaled energy density
$\logeps^{-1}e_\ep(u_\ep)$ is known to concentrate as $\ep\to 0$,
after possibly passing to a subsequence, around an
$(n-2)$-dimensional stationary varifold --- a weak, measure-theoretic
minimal surface. This is proved in an appendix in \cite{Stern}, following earlier results 
in simply-connected Euclidean domains, such as those in
\cite{BBO, LinRiv1, LinRiv2}.
Similar but more complicated results hold
when $M$ is not simply connected; in this case,
the limiting energy measure may have a diffuse part, but
any concentrated part must again be an $(n-2)$-dimensional  stationary varifold.


In this paper we address a sort of converse question: 

\begin{center}
{\it When can a given  codimension $2$
minimal surface  be realized as the energy concentration set of a sequence of 
solutions of \eqref{GL}? }
\end{center}
A first answer is provided by Gamma-convergence results, see \cite{JSon,ABO},
that relate the Ginzburg-Landau functional and, roughly speaking, the $(n-2)$-dimensional area (with multiplicity) of a limiting vorticity concentration set, where the vorticity associated to a wave function $u$, 
denoted $Ju$, is the $2$-form defined by
\beq\label{Ju.def}
Ju := du^{1}\wedge{}du^{2}, \qquad
\mbox{where $u=u^{1}+iu^{2}$ and $u^1, u^2$ are real-valued.}
\eeq
(We will also sometimes refer to $Ju$ as the Jacobian of $u$.)
These results  imply as a general principle that one should be  able to find solutions  $u_\ep$ of \eqref{GL} whose energy and vorticity
concentrate around a  {\em locally area-minimizing} minimal surface of codimension 2. In the Euclidean setting,
specific
instances of this general principle, for particular compatible choices of boundary conditions on 
the minimal surface and the solutions $u_\ep$ of \eqref{GL}, 
have been established in \cite{ABO, Sand,MSZ}. However,  arguments based on Gamma-convergence are of limited use for capturing the behaviour of non-minimizing critical points. 

The corresponding question is also very well-understood for minimal {\em hypersurfaces} and the Allen-Cahn equation, {\em i.e.} the scalar counterpart of \eqref{GL}, see for example \cite{KS, PR,Kow, dPKW} among many others. Many of these results are based on gluing techniques and elliptic PDE arguments, which can be used to construct a great variety of solutions and establish detailed descriptions of them. These techniques seem to be hard to implement for the Ginzburg-Landau equation in $3$ or more dimensions.

A particularly basic case in which our question remains open
concerns the Ginzburg-Landau equation \eqref{GL} on a  smooth bounded domain $\Omega\subset \R^3$ 
containing an unstable geodesic with respect
to natural boundary conditions, {\em i.e.}  a line segment  in $\Omega$ meeting $\partial \Omega$ orthogonally at both ends, admitting perturbations that decrease the arclength quadratically, and satisfying a natural nondegeneracy condition.

In this situation one would like to prove the existence of a sequence $(u_\ep)$  of solutions of the Ginzburg-Landau equations, also with natural (Neumann) boundary conditions, whose energy and vorticity concentrate around the given line segment. Such solutions
would satisfy
\beq\label{energyL}
\lim_{\ep\to 0} \EUep(u_\ep) = L =: \mbox{  the length of the geodesic}.
\eeq
Partial progress toward this goal was achieved in \cite{JSt}, which develops 
a general framework for using Gamma-convergence to study
convergence, not of {\em critical points}, but of {\em critical values},
then uses this framework to prove the existence of solutions of \eqref{GL},
in the situation described above, that satisfy \eqref{energyL}, but without control
over the limiting concentration set. An example in the same paper (Remark 4.5)
shows that the general framework is too weak to  characterize asymptotic behaviour of critical points --- in this context, to determine where the energy and vorticity concentrate.
For this, more detailed information about the sequence of solutions is needed.

The results of \cite{JSt} were extended to the Riemannian setting
in the Ph.D. thesis of Jef\mbox{}frey Mesaric in \cite{Mesaric} which, starting with
a nondegenerate unstable closed geodesic on a closed, oriented $3$ dimensional Riemannian manifold $(M,g)$,
uses machinery from \cite{JSt} to construct solutions to \eqref{GL}
satisfying  \eqref{energyL}.
Again, this result  does not establish whether the energy of the solutions concentrates along the geodesic.

In the main result of this paper, we fill in this gap in the Riemannian case.
Our main result is the following theorem.

\begin{theorem}\label{thm:1a}
Let $(M,g)$ be a closed oriented $3$-dimensional Riemannian manifold, and 
let $\gamma$ be a closed, embedded, nondegenerate geodesic of length $L$.
Assume in addition that 
$\gamma = \partial S$ in the sense of 
Stokes' Theorem for some $2$-dimensional submanifold $S$ of $M$. 

Then there exists $\varepsilon_{1}>0$ such that for every $0<\varepsilon <\varepsilon_1$,
there is a solution $u_{\varepsilon}$ of the Ginzburg-Landau equation \eqref{GL}
such that
\begin{equation*}
\frac 1 \pi \int_M \varphi \wedge Ju_\varepsilon  \to  \int_\gamma \varphi 
\qquad\mbox{ for every smooth $1$-form $\varphi$ on $M$}
\end{equation*}
and
\[
\frac 1{\pi |\log\varepsilon|} \int_M \phi\, e_\varepsilon(u_\varepsilon)
\to \int_\gamma \phi \, d\calH^1 \qquad\mbox{ for every }\phi \in C^\infty(M)
\]
as $\varepsilon \to 0$, 
where $\calH^k$ denotes $k$-dimensional Hausdorff measure.
\end{theorem}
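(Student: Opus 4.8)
The plan is to construct $u_\ep$ by a Lyapunov--Schmidt reduction built on an explicit approximate solution whose vorticity is concentrated on $\gamma$, and then use the nondegeneracy of $\gamma$ to solve the reduced finite-dimensional problem. First I would set up Fermi coordinates in a tubular neighborhood of $\gamma$: writing points near $\gamma$ as $(s,z)$ with $s\in \R/L\Z$ the arclength parameter and $z=(z^1,z^2)\in\R^2$ the normal coordinates, the metric $g$ takes the form $ds^2 + |dz|^2 + O(|z|^2)$, and the geodesic equation for $\gamma$ means there are no linear-in-$z$ terms in the expansion of $g_{ss}$. The leading approximate solution is the standard degree-one Ginzburg--Landau vortex profile $U(z/\ep) = f(|z|/\ep)e^{i\theta(z)}$ transplanted along $\gamma$, cut off away from the tube; far from $\gamma$ one takes $u_\ep\approx$ a fixed harmonic-map-type phase (this is where the hypothesis $\gamma=\partial S$ enters — it guarantees the existence of a global complex-valued map of the right homotopy type, i.e. that the class $[\gamma]$ bounds so the approximate solution extends to all of $M$). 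One then adds lower-order corrections to the profile — a $\log$-order "cell problem" correction absorbing the curvature of $\gamma$ and the metric error — to produce an approximate solution $u_\ep^{(0)}$ with $\|-\Delta u_\ep^{(0)} + \ep^{-2}(|u_\ep^{(0)}|^2-1)u_\ep^{(0)}\|$ small in a suitable weighted norm.

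Next I would carry out the gluing/fixed-point step. One looks for a genuine solution of the form $u_\ep = u_\ep^{(0)} + \phi$, leading to a nonlinear equation $\mathcal{L}_\ep \phi = -E_\ep + Q_\ep(\phi)$ where $E_\ep$ is the error of the approximate solution, $\mathcal{L}_\ep$ is the linearized Ginzburg--Landau operator at $u_\ep^{(0)}$, and $Q_\ep$ collects the quadratic and higher terms. The operator $\mathcal{L}_\ep$ has an approximate kernel coming from the two translational zero modes of the planar vortex (corresponding to moving the vortex center in the two normal directions $z^1,z^2$), parametrized by a normal vector field $\Phi:\R/L\Z\to\R^2$ along $\gamma$; there is also a phase/rotation mode, but that is handled by working in a class of maps with a fixed normalization. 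Modulo this approximate kernel, $\mathcal{L}_\ep$ is invertible with norm blowing up at most polynomially in $\logeps$, uniformly over admissible $\Phi$ with $\|\Phi\|$ small; this is the technical core of the linear theory and requires careful weighted estimates (in the spirit of the Allen--Cahn constructions in \cite{dPKW} and the Ginzburg--Landau analyses underlying \cite{JSt, Mesaric}), handling separately the inner region near the curve and the outer region. A standard contraction-mapping argument then produces, for each small $\Phi$, a solution $\phi = \phi(\Phi)$ of the projected equation, depending smoothly on $\Phi$, with $\|\phi(\Phi)\|$ controlled by the size of the error.

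Finally I would solve the reduced equation: the full equation holds provided the projection of $E_\ep - Q_\ep(\phi(\Phi)) + \mathcal{L}_\ep\phi(\Phi)$ onto the approximate kernel vanishes, which is a system of two scalar equations (indexed by the position $s$ along $\gamma$) for the unknown $\Phi$. The leading-order reduced operator is, up to a normalizing constant and lower-order terms, the Jacobi operator of $\gamma$ as a geodesic — the second variation of length — acting on the normal field $\Phi$, plus a forcing term of size $o(1)$ coming from the residual error. \emph{Nondegeneracy of $\gamma$} is precisely the statement that this Jacobi operator is invertible on normal fields, so the implicit function theorem yields a unique small $\Phi=\Phi_\ep\to 0$ solving the reduced problem, hence a genuine solution $u_\ep = u_\ep^{(0)} + \phi(\Phi_\ep)$ of \eqref{GL}. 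The stated convergence of $\frac1\pi\int_M\varphi\wedge Ju_\ep$ to $\int_\gamma\varphi$ and of the rescaled energy density to $\mathcal{H}^1\rest\gamma$ then follows because $u_\ep$ is, by construction, close in energy norm to the explicit profile concentrating on $\gamma$: the vortex profile contributes exactly $\pi\logeps$ of energy per unit length with Jacobian converging to $\pi\delta_\gamma$, and the corrections $\phi(\Phi_\ep)$ and the curve displacement $\Phi_\ep$ are too small to affect the limit.

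I expect the \textbf{main obstacle} to be the linear theory — establishing uniform invertibility of $\mathcal{L}_\ep$ modulo the translational kernel with polynomial-in-$\logeps$ loss. The difficulty is genuinely three-dimensional: unlike the planar vortex, where the linearized operator's mapping properties are classical, here one must control the interaction between the vortex-core scale $\ep$, the curvature scale of $\gamma$, and the global geometry of $(M,g)$, and the logarithmically divergent energy of the vortex makes the choice of weighted norms delicate. A secondary technical point is ensuring the approximate kernel is correctly identified and that the phase/gauge mode does not obstruct the reduction — this is where the topological hypothesis $\gamma = \partial S$ and a careful choice of the ambient phase must be used consistently throughout.
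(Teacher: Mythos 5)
Your proposal outlines a Lyapunov--Schmidt / gluing construction, which is a genuinely different strategy from the one the paper actually uses. The paper instead proceeds variationally and parabolically: it selects solutions of the Ginzburg--Landau \emph{heat flow} whose energy stays trapped between the asymptotic min-max levels coming from the saddle-point geometry of $E_V$ at $T_\gamma$ (Section~\ref{sec:4}), invokes a Riemannian extension of the Bethuel--Orlandi--Smets theorem (Theorem~\ref{T:BOS}) to show that any limiting concentration set is a stationary $1$-varifold with lower density bounds, and then rules out any such stationary varifold other than the one carried by $\Gamma$ via a delicate Allard--Almgren-style analysis near the nondegenerate geodesic (Proposition~\ref{lem:noVstar}). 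A genuine solution of the elliptic equation \eqref{GL} is finally extracted by choosing a time slice of the flow at which the $L^2$ norm of $\partial_t u_\ep$ is small. None of this involves an approximate vortex ansatz, a weighted linear theory near the core, or a reduced equation.

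The decisive gap in your proposal is precisely what you flag as the ``main obstacle'': the uniform invertibility of $\mathcal L_\ep$ modulo the approximate translation kernel, with controlled loss in $\logeps$. The authors state in the introduction that this is not a technicality they skipped for convenience: gluing techniques ``seem to be hard to implement for the Ginzburg-Landau equation in $3$ or more dimensions.'' To date no such uniform linear theory for a thin vortex filament in a three-dimensional manifold has been established, and your proposal does not supply it; without it the contraction-mapping step does not close and the reduced problem is not well posed. There is also a secondary imprecision in the reduced equation: because of the $\log(1/\ep)$ divergence of the per-unit-length energy, the coefficient multiplying the Jacobi operator and the size of the forcing are each logarithmically scaled, so even identifying the correct leading-order reduced system requires the sharp estimates you have deferred. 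In short, what you have written is a plausible research program, not a proof; the step you postpone is exactly the step the paper's authors chose to avoid entirely by taking the heat-flow and varifold route.
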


In fact we will prove a slightly stronger result; see Theorem \ref{thm:1} for the full statement.

We briefly sketch the main ideas, {\em not} in the
order in which they appear in the body of the paper. 
Terminology such as ``nondegenerate" and ``stationary varifold" are defined 
in Section \ref{sec:backnot} below.

\begin{itemize}
\item In Section 4 we show that for any $\delta>0$, 
there exists $\ep_0>0$ such that for $0<\ep<\ep_0$ and any $\tau>0$,
one can find a solution $u_\ep$ of the Ginzburg-Landau
heat flow whose vorticity is initially concentrated near the geodesic $\Gamma:=\gamma([0,L))$,
and such that 
\[
L-\delta \le \EUep(u_\ep(\cdot, t)) \le L+\delta \qquad\mbox{ for all }t\in [0,\tau].
\]
See Proposition \ref{prop:goodtrajectory}.
This relies heavily on tools developed in the earlier
papers \cite{JSt, Mesaric}.

The main point of the proof of Theorem \ref{thm:1a} is to strengthen this by showing 
that for such solutions, if $\ep$ and $\delta$ are 
small enough, the vorticity $\frac 1 \pi J u_\ep(\cdot,t)$ does not stray very far from
$\Gamma$ for any $t\in [0,\tau]$. 

\item We carry this out in Section \ref{sec:5}, using an argument by contradiction 
and passing to limits
to obtain a stationary $1$-dimensional varifold that is close, but not equal, to the varifold associated to $\Gamma$. This argument requires, among other ingredients,
an extension to the Riemannian setting of an important theorem of Bethuel, Orlandi, and Smets \cite{BOS}. The extension we need is stated in Theorem 
\ref{T:BOS} and is proved in a companion paper, see \cite{Colinet}.
The stationary varifold satisfies additional good properties, notably including lower density bounds.

\item 
To obtain a contradiction, we prove  that this stationary varifold cannot exist.
This is the content of Proposition \ref{lem:noVstar}, which
is a measure theoretic strengthening of the classical fact that a nondegenerate closed geodesic is isolated; it is the only closed geodesic in a tubular neighborhood of itself.  The proof relies, among other ingredients, on results from \cite{AA}  about the structure of stationary $1$-dimensional varifolds on Riemannian manifolds.

\end{itemize} 

We believe that something like Theorem \ref{thm:1a} should be valid in much greater generality, including on higher-dimensional manifolds and on smooth, bounded  subsets of $\R^n$, $n\ge 3$, with natural boundary conditions both for the geodesic $\Gamma$ (or codimension $2$ minimal surface, for $n\ge 4$) and the Ginzburg-Landau equation. 
Our proof does not adapt in a straightforward way to either of these settings.

\begin{itemize}
\item Our strategy requires a sufficiently good version of Theorem \ref{T:BOS}. On a bounded set $\Omega\subset \R^n$, even for $n=3$, such a result is not known. If $\Omega$ is convex, a result of this type for the scalar parabolic Allen-Cahn equation was proved several years ago in \cite{MizunoTonegawa}.
A similar strategy could probably be pursued for the Ginzburg-Landau heat flow, but global convexity is not a natural assumption for any analog of Theorem \ref{thm:1a}. 

\item Our reliance on results from \cite{AA} about stationary $1$-dimensional varifolds would seriously complicate any effort to adapt our argument to dimensions $n\ge 4$,
where one would confront stationary varifolds of dimension $n-2\ge 2$.
\end{itemize}

\section{Background and notation}\label{sec:backnot}

\subsection{Geometric notions regarding a non-degenerate geodesic}\label{sec:geom.notions}\hspace{5pt}{}\vspace{5pt}

Throughout this document we use $M$ or $(M,g)$ to denote a closed oriented three dimensional Riemannian manifold where ``closed'' means compact
and without boundary.
We let $TM$ be the bundle over $M$ whose fiber $T_{p}M$ at $p\in{}M$ is the tangent space to $M$ at $p$.
We use the notation $(\cdot,\cdot)_{g}$ to denote the inner product on $TM$ given by $g$.
We also use $\left|\cdot\right|_g$ to denote the corresponding norm, where we will omit mention of $g$ when no confusion will arise.
We write $\vol_g$ to denote the Riemannian volume form associated to the metric $g$.

We will write $r_0>0$ to be a number, fixed throughout this paper, such that
\beq\label{rzero.def}
r_0 < \frac 12\, (\mbox{injectivity radius of $M$}).
\eeq

Throughout this paper, a central role will be played by a geodesic $\gamma$ that we take to be parametrized by arclength. That is, we will assume the existence of an injective map $\gamma:\R/L\Z\to M$ whose range consists of a simple closed curve $\Gamma :=  \{ \gamma(t) : t\in \R/L\Z\} $ of length $L$
such that
\begin{equation}
 |\gamma'|=1, \qquad \nabla_{\gamma'}\gamma' = 0\qquad\mbox{ everywhere in }\R/L\Z.\label{geoODE}
\end{equation}

We will insist that this curve $\Gamma$ bounds an orientable smooth surface $S_\Gamma\subset M$, i.e. \begin{equation}
    \Gamma=\partial S_{\Gamma}.\label{gambdS}
\end{equation}

We introduce here the notation 
\[
d_\Gamma(x):=\dist(x,\Gamma):=\inf \left\{
\int_0^1 |\lambda'(t)|dt : \lambda\in Lip([0,1]; M), \lambda(0) = x, \lambda(1)\in \Gamma\right\}
\]
as well as
\begin{equation}
 K_{r}:=\left\{x\in{}M:d_\Gamma(x)<r\right\}   \label{neigh}
\end{equation}
for a neighborhood of $\Gamma.$

For $t\in \R/L\Z$, we then let
\[
N_{\gamma(t)}\Gamma :=  \{ u \in T_{\gamma(t)}M : (u, \gamma'(t))_g  = 0\}.
\]
A \emph{normal vector field along $\gamma$} is a map $\xi:\R/L\Z\to TM$
such that $\xi(t)\in N_{\gamma(t)}\Gamma$ for every $t$.
We also introduce the coordinates $\psi:B_{r}(0)\times(0,L)\to{}K_{r}$
defined by
\beq\label{TubularCoordinates}
\psi(y,t):=\exp_{\gamma(t)}\left(\sum_{i=1}^{2}y^{i}\Xi_{i}(t)\right)
\eeq
where $\Xi_{1},\Xi_{2}$ are fixed normal vector
fields which are orthogonal for each $t\in(0,L)$.
We note for $r<r_0$, this map is smoothly invertible. 
For future use, we will use the notation
$\psi^{-1}(x) = (y(x), \tau(x)) \in B_r(0)\times (0,L)$, so that for $x\in K_r$,
\beq\label{y.tau.def}
\psi(y,t)=x \qquad\iff \qquad y(x) = y \quad \mbox{ and }\quad \tau(x) = t.
\eeq
We observe that the mapping $\tau$ simply assigns to an $x\in K_r$ the parameter value $t$ corresponding to the closest point on $\Gamma$ to $x$.

Given two normal vector fields along $\gamma$, denoted by $\xi, \tilde{\xi}$, we can define the $L^2$ inner
product in the natural way:
\[
(\xi, \tilde{\xi})_{L^2}  := \int_{\R/L\Z} (\xi(t), \tilde{\xi}(t))_g\, dt.
\]
We will write $L^2(N\Gamma)$ to denote the space of square integrable
normal vector fields, a Hilbert space with the above inner product.\vspace{5pt}

For $\xi\in L^2(N\Gamma)$, we will use the notation
\begin{equation}
\gamma_\xi(t) := \exp_{\gamma(t)} \xi(t),
\label{gamexp}
\end{equation}
where $\text{exp}$ denotes the exponential map.

We next recall the {\em Jacobi operator} $L_J$ which acts on smooth normal vector fields $\xi$ along $\gamma$, 
and is defined by
\begin{equation}
    L_J \xi := - \xi'' + R(\xi, \gamma')\gamma',\label{jacobi}
\end{equation}
where $R$ denotes the curvature tensor. We say that a geodesic is {\em nondegenerate}  if $0$ is not an eigenvalue of $L_J$.

With this notion in hand, we add another crucial hypothesis on the geodesic by assuming henceforth that
\beq\label{gamma.def}
\mbox{$\gamma:\R/L\Z\to M$ is a simple, closed, nondegenerate geodesic with $|\gamma'|\equiv 1$, }
\eeq

One says that $\gamma$ has {\em finite index} if the total number (algebraic multiplicity)
of negative eigenvalues of $L_J$ is finite. Since $M$ is closed,
this is always true, as a consequence of standard Sturm-Liouville theory.
Our standing assumption \eqref{gamma.def} that $\gamma$ is nondegenerate
then imples
there exists some $\ell\ge 0$ and a nondecreasing sequence of eigenvalues 
\beq\label{index.ell}
\lambda_1 \le 
\ldots \le \lambda_\ell < 0 < \lambda_{\ell+1} \le\ldots
\eeq
of $L_J$, together with an associated orthonormal basis of $L^2(N \Gamma)$
consisting of (smooth) eigensections $\{ \xi_j\}_{j=1}^\infty$. We will always assume that
$\ell>0$, since otherwise the results presented here admit much simpler proofs.
We define
\beq\label{H+-.def}
H_- := \mbox{span}\{\xi_1,\ldots \xi_\ell\},
\qquad
H_+ := H_-^\perp.
\eeq
We will say that $\xi$ is Lipschitz, and we will write $\xi\in Lip$, if $\gamma_\xi$ is Lipschitz continuous. It is clear that 
\[
H_-(r_0) := \{ \xi\in H_-: \| \xi \|_{L^\infty}\le r_0\} \subset Lip
\]
for $r_0$ and $H_-$  from  \eqref{rzero.def} and \eqref{H+-.def} respectively.\vspace{5pt}

The standard fact that the Jacobi operator, cf. \eqref{jacobi}, is the second variation of arclength, together with the definition \eqref{H+-.def} of $H_-$,  implies that there exist
$c_0, r_0>0$ such that for all $\|\xi\|_{L^\infty}\le r_0$ one has
\beq\label{gxi.length}
\begin{aligned}
\int_{\R/L\Z} |\gamma_\xi'(t)|\, dt &
\le   L- c_0 \|\xi \|_{L_2}^2\quad\mbox{ if }\xi\in H_-(r_0)
\\
\int_{\R/L\Z} |\gamma_\xi'(t)|\, dt &
\ge  L+ c_0 \|\xi\|_{L_2}^2\quad\ \, \  \mbox{ if }\xi\in H_+\cap Lip .
\end{aligned}
\eeq

\subsection{Forms and currents}

We denote, for $k\in\N\cup\left\{0\right\}$, the space of smooth $k$-forms on $M$ by
\[
\calD^{k}(M):=\left\{\phi\in{}C^{\infty}(M;\wedge^{k}M)\right\}
\]
where $\wedge^{k}M$ is an abbreviated notation for $\wedge^{k}T^{*}M$.
We denote the dual space of $\calD^{k}(M)$, for $k\in\N\cup\left\{0\right\}$, by
\[
\calD_k(M) := \{\mbox{$k$-currents on $M$} \}.
\]
We refer to the elements of $\calD_{k}(M)$ as \emph{$k$-currents}.
For a $k$-current $T$, we define
\[
\mbox{ the {\em mass} of $T$} =\mass(T) := \sup\{ T(\phi) : \|\phi\|_\infty \le 1\} \in [0,+\infty].
\]
We will be most interested in $1$-currents. A basic class of examples consists of
$1$-currents we shall write as $T_{\lambda}$ whose action on $\phi\in \calD^1(M)$ takes the form
\beq\label{Tlambda.def}
T_\lambda(\phi) := \int_{\lambda}\phi ,\qquad\mbox{ where }\lambda:(a,b)\to M
\mbox{ is a Lipschitz curve}.
\eeq
We will say a $1$-current is {\em integer multiplicity rectifiable} if it can be written as a finite or
countable sum of currents of the form \eqref{Tlambda.def}. We will write
\[
\calR_1(M) := \{ T\in \calD_1(M) \ : \ \mass(T)<\infty, \ \  \mbox{$T$ is  integer multiplicity rectifiable }\}.
\]
For a $1$-current $J$, we write $\|J\|$ to denote the associated total variation measure, defined through its action on continuous, nonnegative functions $f:M\to\R$ via
\begin{equation}
 \int f \, d\|J\| := \sup \{ J(\phi): \phi\in \calD^1(M),\; |\phi|_g\le f\}.  \label{tovar}  
\end{equation}

For a $k$-current $S$, the {\em boundary of $S$} is the $(k-1)$-current $\partial S$ defined by
\[
\partial S(\phi) := S (d\phi), \qquad \mbox{for all}\;\phi \in \calD^{k-1}(M).
\]
We define
\[
\calF_1'(M) := \{  T\in \calD_1(M):  \ \ T=\partial S\mbox{ for some }S\in \calD_{2}(M),\, \mass(S)<\infty \}.
\]
and for $T\in \calF_1'(M)$, we will write
\[
\|  T\|_{\calF} := \inf\{ \mass(S) : T=\partial S\} .
\]
We also define
\[
\calR_1'(M) :=   \calR_1(M)\cap \calF_1'(M).
\]

We note that the $1$-current $T_\gamma$ associated with the geodesic $\gamma$ via \eqref{Tlambda.def}, in particular, bounds a finite mass $2$-current; that is,
\begin{equation}
 T_\gamma\in \calR_1'(M),\label{Tbounds}   
\end{equation}
in light of the assumption \eqref{gambdS}.

Lastly, we will at times wish to identify the Jacobian ({\em i.e.} vorticity) of a map $u\in H^1(M;\C)$ with an element of $\calD_1(M)$, which we
denote $\star J(u)$, and which is defined through its action on
$1$-forms $\phi$ by
\begin{equation}
 \star J(u)(\phi) = \int \phi \wedge J(u),\label{starJu}
\end{equation}
where $J(u)=du^{(1)}\wedge du^{(2)}$ for $u=u^{(1)}+iu^{(2)}$ where $u^{(1)},u^{(2)}$ are real-valued.

\subsection{Gamma-limit of the Ginzburg-Landau functional}

Below we state the version we will need of  standard Gamma-convegence results for the Ginzburg-Landau functional.

We first fix the notation $V = \calF_1'(M)$, with the flat norm $\| v\|_V := \|v\|_{\calF}$.
We also define the functional
\beq\label{EV.def}
E_V(T) := 
\begin{cases}
\mass(T) &\mbox{ if }T\in \calR_1'(M)\\
+\infty &\mbox{ if not}.
\end{cases}
\eeq
Thus $E_V$ is an extension to $V$ of the ``arclength functional" in the sense that if  $\lambda:(a,b)\to M$ is an injective Lipschitz continuous curve and $T_\lambda$ is the corresponding current, then $E_V(T_\lambda) = \mbox{ arclength of }Image(\lambda)$.

The following result is deduced in \cite{Mesaric}, Theorem 5.1 from corresponding Euclidean results, cf. \cite{ABO, JSon}.

\begin{theorem}\label{thm:Mesaric-Gamma} Let $(M,g)$ be a closed $3$-dimensional Riemannian manifold.

\smallskip

{\bf 1}. Let  $(u_\ep)_{0<\ep<\ep_0}$ be a sequence in $H^1(M;\C)$. If there exists $C>0$ such that 
$\EUep(u_\ep)\le C$ for all $\ep\in (0,\ep_0)$, then $(\frac 1\pi \star Ju_\ep)_{0<\ep<\ep_0}$ is precompact in V, and
any limit as $\ep\to 0$ belongs to $\calR_1'(M)$

\smallskip

{\bf 2}. Let  $(u_\ep)_{0<\ep<\ep_0}$ be a sequence in $H^1(M;\C)$. If $T\in V$ and\\ $\|\frac 1\pi \star Ju_\ep - T\|_\calF \to 0$
as $\ep\to 0$, then $\liminf_{\ep\to 0} \EUep(u_\ep)\ge E_V(T)$.

\smallskip

{\bf 3}. For any $T\in V$, there exists a sequence $(u_\ep)_ {0<\ep<\ep_0}$ in $H^1(M;\C)$
such that $\|\frac 1\pi \star Ju_\ep - T\|_\calF \to 0$ and $\limsup_{\ep\to 0} \EUep(u_\ep)\le E_V(T)$.

\end{theorem}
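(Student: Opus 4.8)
The plan is to obtain all three parts of Theorem~\ref{thm:Mesaric-Gamma} from the corresponding Euclidean $\Gamma$-convergence statements of \cite{ABO, JSon} by a localization-and-patching argument; the only point requiring real attention is the passage between the Euclidean and the Riemannian settings. To set up, fix a finite atlas of geodesic charts $\varphi_a\colon \Omega_a\to B_1\subset\R^3$ which, after shrinking, may be taken so that the pushed-forward metric $(\varphi_a^{-1})^*g$ is $\Lambda_a$-bi-Lipschitz to the Euclidean metric with $\Lambda_a$ as close to $1$ as desired, and take a subordinate partition of unity $\{\chi_a\}$. Three elementary facts follow: (i) the energy densities compare,
\[
\Lambda_a^{-c}\int_{B_1}e^{\mathrm{eucl}}_\ep(\tilde u)\,dx\ \le\ \int_{\Omega_a}e_\ep(u)\,\vol_g\ \le\ \Lambda_a^{c}\int_{B_1}e^{\mathrm{eucl}}_\ep(\tilde u)\,dx, \qquad \tilde u=u\circ\varphi_a^{-1};
\]
(ii) the $1$-current $\star J(u)$ is defined in \eqref{starJu} purely through the pairing $\phi\mapsto\int\phi\wedge J(u)$ and is therefore metric-independent, with $\varphi_{a*}(\star J(u)) = \star J(\tilde u)$; (iii) masses of currents, hence the norms $\|\cdot\|_\calF$, are comparable across $\Omega_a$ with constants tending to $1$ as $\Lambda_a\to1$. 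Observe also that, since $J(u)=d\bigl(u^{(1)}du^{(2)}\bigr)$, one has $\star J(u)=\partial T_{u^{(1)}du^{(2)}}$, where $T_{u^{(1)}du^{(2)}}$ is the $2$-current $\phi\mapsto\int u^{(1)}du^{(2)}\wedge\phi$ of mass at most $\|u^{(1)}\|_{L^2}\|\nabla u^{(2)}\|_{L^2}<\infty$; in particular $\star J(u)\in\calF_1'(M)$ for every $u\in H^1(M;\C)$, and $\partial(\star J(u))=0$ on the closed manifold $M$.

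\textbf{Part 1.} From $\EUep(u_\ep)\le C$ we get $\int_{B_1}e^{\mathrm{eucl}}_\ep(\tilde u_\ep^a)\,dx\le C_a\logeps$ in each chart, so the Euclidean Jacobian/compactness estimate of \cite{JSon, ABO} yields, on a slightly smaller ball, integer-multiplicity rectifiable $1$-currents $T^a_\ep$ with $\mass(T^a_\ep)\le C'$ and $\|\tfrac1\pi\star J(\tilde u^a_\ep)-T^a_\ep\|\to0$ in the flat topology. Patching with $\{\chi_a\}$ and pulling back produces $\tilde T_\ep\in\calR_1(M)$ with $\mass(\tilde T_\ep)\le C''$, $\mass(\partial\tilde T_\ep)\to0$ (using $\partial(\star J(u_\ep))=0$), and $\|\tfrac1\pi\star J(u_\ep)-\tilde T_\ep\|_\calF\to0$. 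Federer--Fleming compactness and the closure theorem give a subsequence $\tilde T_\ep\to T$ with $T\in\calR_1(M)$, $\partial T=0$, whence $\tfrac1\pi\star J(u_\ep)\to T$ in $V$. Finally $T\in\calF_1'(M)$: writing $T-\partial T_{u^{(1)}_\ep du^{(2)}_\ep}=A_\ep+\partial B_\ep$ with $\mass(A_\ep)+\mass(B_\ep)\to0$ and $\partial A_\ep=0$, the mass-minimizing representative of the class $[T]=[A_\ep]$ in $H_1(M;\R)$ has mass $\le\mass(A_\ep)\to0$, so $[T]=0$ and $T=\partial S$ for a finite-mass $S$. Thus $T\in\calR_1'(M)$.

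\textbf{Parts 2 and 3.} For the lower bound, if $\liminf\EUep(u_\ep)=+\infty$ there is nothing to prove; otherwise a subsequence has $\EUep(u_\ep)\le C$, so Part~1 gives $T\in\calR_1'(M)$ and $E_V(T)=\mass(T)$. The measures $\mu_\ep:=\tfrac1{\pi\logeps}e_\ep(u_\ep)\,\vol_g$ have bounded mass; pass to a weak-$*$ limit $\mu$. The Euclidean $\Gamma$-$\liminf$ inequality of \cite{JSon, ABO}, which has sharp constant, transferred chartwise with $\Lambda_a\to1$, gives $\mu\ge\|T\|$ as measures, hence $\liminf\EUep(u_\ep)\ge\mass(T)=E_V(T)$. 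For the upper bound, first, using that $\calR_1'(M)$ is $\|\cdot\|_\calF$-dense in $V$ (approximate a finite-mass $2$-current filling of $T$ in the currents flat norm by polyhedral chains) and a diagonal argument, reduce to $T\in\calR_1'(M)$; then $\partial T=\partial\partial S=0$, so $T$ is an integral cycle, and the approximation theorem for integral currents reduces matters further to $T=T_\Lambda$ for a finite union $\Lambda$ of polygonal curves with integer multiplicities and $\mass(T_\Lambda)$ arbitrarily close to $\mass(T)$. Around such $\Lambda$, in a tubular neighbourhood covered by finitely many of the charts above, perform the classical construction: insert in the normal $2$-planes the optimal radial profile $z\mapsto f(|z|/\ep)\,z/|z|$ of the appropriate degree, glued to the constant $1$ outside a fixed neighbourhood. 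As in the Euclidean case this yields $\EUep(u_\ep)\le\mass(T_\Lambda)+o(1)$ and $\tfrac1\pi\star J(u_\ep)\to T_\Lambda$ in $V$; diagonalizing over finer $\Lambda$ finishes the proof.

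\textbf{Main obstacle.} The difficulty is not conceptual but resides in the transfer step: one must arrange the chart-by-chart distortions $\Lambda_a$ to tend to $1$ while keeping the partition of unity and the covering multiplicity under control, so that the sharp constant --- equivalently, the normalization making $E_V(T)=\mass(T)$ with the factor $\tfrac1{\pi\logeps}$ --- survives simultaneously in the lower bound of Part~2 and the upper bound of Part~3; and one must verify that the rectifiable approximants built chartwise in Part~1 glue into a global finite-mass current whose flat limit genuinely lies in $\calF_1'(M)$ rather than being merely a rectifiable cycle. Both points are handled by the remarks above --- metric comparison with constants $\to1$, and the identity $\star J(u_\ep)=\partial T_{u^{(1)}_\ep du^{(2)}_\ep}$ together with the mass-minimizing-representative argument --- but they are where the care is needed.
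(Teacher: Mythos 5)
Your approach is essentially the same one the paper itself uses: Theorem~\ref{thm:Mesaric-Gamma} is not proved here but cited from Mesaric's thesis (Theorem 5.1), which deduces it from the Euclidean $\Gamma$-convergence results of \cite{ABO,JSon} by precisely the chart-by-chart localization you describe, exploiting that $\star J(u)$ is metric-independent and that metric distortions in geodesic normal coordinates can be made arbitrarily close to $1$. Your additional observations --- in particular the identity $\star J(u)=\partial T_{u^{(1)}du^{(2)}}$, which shows $\star J(u)\in\calF_1'(M)$ for every $u\in H^1$, and the mass-minimizing-in-homology argument to conclude $T\in\calF_1'(M)$ --- fill in details the paper leaves implicit, and are correct; the one place you gloss over is the "patching" of the chartwise rectifiable approximants in Part~1 (multiplying by the $\chi_a$ destroys integer multiplicity, so the gluing must be done by localizing the Euclidean compactness estimates and matching limits on chart overlaps rather than by partition-of-unity averaging of the currents themselves), but this is the same care that \cite{Mesaric} must take, not a different approach.
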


The geodesic $\gamma$ is a saddle point of the arclength with respect to smooth perturbations, as reflected in \eqref{gxi.length}. For use in combination with Theorem \eqref{thm:Mesaric-Gamma}, 
one needs to identify a sense in which the corresponding current $T_\gamma$ is a saddle point of $E_V$. We defer a discussion of this and related issues to Section \ref{sec:4}.

\subsection{Varifolds}\label{sec:Var}
We briefly recall the definition of a rectifiable $1$-varifold and introduce some notation that will be used later.
After doing this we will introduce the definition of a general $1$-varifold.
We note that the general definition will only be used in the proof of Proposition $\ref{lem:noVstar}$.
For general varifolds we will follow \cite{AA} with some terminology from \cite{LS}.

For any $1$-dimensional rectifiable set 
$\Sigma$,  basic theory (see for example \cite{LS}, Lemma 11.1) shows that there exists a
countable family of $C^1$ curves $(\Lambda_j)_{j\in \N}$ in $M$ such that
\[
\Sigma \subset N_0 \cup \left( \cup_{j\in N}\ \Lambda_j\right),
\qquad\mbox{ and } \calH^1(N_0) = 0,
\]
and every point in $\Sigma \setminus N_0$ is contained in exactly one $\Lambda_j$. We then define, for $x\in \Sigma \setminus N_0$ 
\[
\mbox{ap} T_x\Sigma = T_x\Lambda_j\mbox{ for the unique $j$ such that }x\in \Lambda_j.
\]
We will write $\tau_\Sigma(x)$ to denote a unit vector in $\mbox{ap} T_x\Sigma$.

First, we recall that if $\cals$ is a countably $1$-rectifiable, $\calH^{1}$ measurable subset of $M$ and $\Theta:\cals\to(0,\infty)$ is a locally
$\calH^{1}$-integrable function on $\cals$ then we can use the pair $(\cals,\Theta)$ to form the measure $\calH^{1}\rest\Theta$, where we have extended
$\Theta$ to be zero outside of $\cals$.
We refer to such a measure as a \emph{rectifiable $1$-varifold}.
We also refer to the function $\Theta$ as the \emph{multiplicity function} of this rectifiable $1$-varifold and, at times,  we will write $\Theta_{\cals}$ to emphasize the association. We will also sometimes use the alternate notation $\Theta \calH^1\rest \cals$ for $\calH^1\rest \Theta_\cals$.
If $\Theta$ happens to be integer-valued $\calH^{1}$-almost everywhere then we will say this rectifiable varifold is of
\emph{integer multiplicity}.
Finally, if there is a $\lambda>0$ such that $\Theta\ge\lambda$ at $\calH^{1}$-almost every point then we say that the rectifiable varifold has
 \emph{density bounded below}.
A particular example of an integer multiplicity rectifiable $1$-varifold that we will be interested in will be integration over a countable collection
of geodesics.

\vskip.1in

Next, for a smooth Riemannian manifold, $M$, we let $PM$ be the bundle whose fiber $P_a M$ at $a\in{}M$ consists of the
lines through the
origin in $T_{a}M$.
If $x\in M$ and $\xi$ is a unit vector in $T_xM$, we will sometimes abuse notation slightly and write $(x,\xi)$
to denote the element of $PM$
\beq\label{repPM}
(x,\xi) \sim \{ s\xi : s\in \R\} \subset T_xM.
\eeq
Thus $(x,\xi)$ and $(x,-\xi)$ correspond to the same element of $PM$. Suppose that $\eta$ is a smooth
function on $M$. When representing points in $PM$ as described above, a
mapping such as $(x,\xi)\in PM\mapsto |\nabla_\xi \eta(x)|^2$ is well-defined as a function
$PM\to \R$, since it is independent of the choice of sign
for the unit vector $\xi$.

We let $\pi:PM\to{}M$ be the bundle projection.

We refer to a measure $\calV\in\calM(PM)$ as a \emph{$1$-varifold}.

Observe that to a rectifiable $1$-varifold $V = \calH^1\rest \Theta_\cals$, we may associate a $1$-varifold $\calV$
defined by
\beq\label{VvscalV}
\calV(A) := V\left(\{ a\in M : \mbox{ap}T_a \cals \in A  \} \right)
=\int_{\{a\in \cals \,:\, {\rm{ap}}T_a \cals \in A  \} } \Theta_\cals(a) \, d\calH^1.
\eeq
Roughly speaking, the difference between a rectifiable $1$-varifold and the associated general $1$-varifold is that the latter explicitly records information about the approximate tangent spaces to the set $\cals$ on which the former lives.

\subsection{Definitions: first variation, stationarity, Brakke flow}\label{ss:Brakke}


For a rectifiable 1-varifold $\nu$ given by  $\nu =\calH^1 \rest \Theta_\Sigma$, where $\Sigma$ is a 1-rectifiable set, 
the {\em first variation} of $\nu$ is a distribution, denoted $\delta\nu$, whose action on smooth vector fields $X$ is defined by
\beq\label{first.var}
\delta\nu(X) := \int_{\Sigma} ( \tau_\Sigma(x) , \nabla_{\tau_\Sigma(x)} X (x))_g \  \Theta(x) \ d\calH^{1}.
\eeq
(Note that since $\tau_\Sigma$ appears quadratically, the choice of unit vector in $\mbox{ap}T_x\Sigma$ does not matter.)
A $1$-d varifold $\nu$ of the given form is {\em stationary} if 
\beq\label{stationary.def}
\delta\nu = 0.
\eeq
We remark that in light of \eqref{geoODE}, of course it follows from an integration by parts that one can associate a multiplicity-one stationary varifold with the geodesic $\gamma$.
Properties of stationary varifolds will be recalled later as needed.

For simplicity, we discuss Brakke flows and related notions from geometric measure theory only
in the case of $1$-dimensional varifolds in the $3$-dimensional manifold $(M,g)$.

Let
\beq\label{nustar}
\nu_*^t =\Theta_*(x,t) \calH^{1}\rest \Sigma^t_\nu, \qquad t\ge 0
\eeq
be a family of rectifiable $1$-varifolds in $M$.
To say that $(\nu_*^t)_{t>0}$ is a {\it Brakke flow} means that for $t>0$ there exists a 
$\nu^t_*$-integrable vector field $H(\cdot,t)$ along $\Sigma^t_\nu$ 
(that is, $\vec H(x,t)\in T_xM$ for $\nu^t_*$ almost every $x\in \Sigma^t_\nu$)
such that  the following hold.
First,
\[
\delta\nu^t_*(X) = \int_{M} (X ,  H)_g \, d\nu^t_* = \int_{\Sigma^t_\nu}(X ,  H)_g \, \Theta_*(x,t)\, d\calH^{1}
\]
for all $C^1$ vector fields $X$.
Second, for every $t >0$ and every nonnegative $\chi\in C^2(M)$,
\beq\label{Brakke.ineq}
\limsup_{s\to t}\frac{\nu^t_*(\chi) - \nu^s_*(\chi)}{t-s}
\le  - \int_M  \chi | H|^2 d\nu^t_* + \int_M (\nabla\chi, P^\perp (H ))_g d\nu^t_*
\eeq
where at a point $x\in \Sigma^t_\nu$ at which $T_x\Sigma^t_\nu$ exists,
we write $P^\perp (\cdot)$ to denote orthogonal projection onto $(T_x\Sigma^t_\nu)^\perp\subset T_xM$.

For  $(\nu^t_*)_{t>0}$ a Brakke flow in $M$ of the form \eqref{nustar},
it is an immediate consequence of \eqref{Brakke.ineq}
that 
\beq\label{nut.decr}
t\mapsto \nu^t_*(M)\mbox{ is nonincreasing}.
\eeq
Another simple fact we will need is the following.
\begin{lemma}
If there exist numbers $0\le a<b$ such that 
\beq\label{nut.const}
t\mapsto \nu^t_*(M)\mbox{ is constant for $a<t<b$}
\eeq
then
\beq
\label{nut.stationary}
\mbox{$\exists$ a stationary varifold $V_*$ in $M$ such that $\nu^t_* =  V_*$ for all $a<t<b$.} 
\eeq
\end{lemma}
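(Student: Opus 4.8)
The plan is to read inequality \eqref{Brakke.ineq} as a differential inequality for the mass and exploit the fact that the mass is constant on $(a,b)$, which forces the ``error'' terms on the right-hand side to vanish. First I would observe that since $t\mapsto \nu^t_*(M)$ is constant on $(a,b)$, taking $\chi\equiv 1$ in \eqref{Brakke.ineq} (or rather, $\chi$ identically $1$, which is legitimate since $M$ is closed so $\chi\in C^2(M)$) gives
\[
0 = \limsup_{s\to t}\frac{\nu^t_*(M)-\nu^s_*(M)}{t-s} \le -\int_M |H|^2\, d\nu^t_*
\]
for each $t\in(a,b)$, because $\nabla\chi=0$. Hence $\int_M |H|^2\, d\nu^t_* = 0$, so $H(\cdot,t)=0$ for $\nu^t_*$-a.e.\ $x$, for every $t\in(a,b)$. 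Plugging this back into the first defining property of a Brakke flow, $\delta\nu^t_*(X) = \int_M (X,H)_g\, d\nu^t_* = 0$ for all $C^1$ vector fields $X$, so each $\nu^t_*$, $t\in(a,b)$, is a stationary varifold.

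It remains to show the varifolds are all \emph{the same}. For this I would use \eqref{Brakke.ineq} again, now with a general nonnegative $\chi\in C^2(M)$: since $H=0$ $\nu^t_*$-a.e., the right-hand side is $\le 0$, so
\[
\limsup_{s\to t}\frac{\nu^t_*(\chi)-\nu^s_*(\chi)}{t-s}\le 0\qquad\text{for all }t\in(a,b).
\]
Thus for every fixed nonnegative $\chi\in C^2(M)$ the function $t\mapsto \nu^t_*(\chi)$ has nonpositive upper-left-Dini derivative everywhere on $(a,b)$; a standard real-analysis fact (a continuous, or even just suitably semicontinuous, function with everywhere nonpositive Dini derivative is nonincreasing) then shows $t\mapsto\nu^t_*(\chi)$ is nonincreasing on $(a,b)$. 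Applying this both to $\chi$ and — after adding a constant so that $c-\chi\ge 0$, again legitimate on closed $M$ — to $c-\chi$, we get that $t\mapsto \nu^t_*(\chi)$ is also nondecreasing, hence constant on $(a,b)$. Since nonnegative functions in $C^2(M)$ (together with constants) separate Radon measures on $M$, this shows $\nu^t_*=\nu^s_*$ as measures for all $s,t\in(a,b)$; call this common varifold $V_*$. Combined with the stationarity established above, $V_*$ is a stationary varifold with $\nu^t_*=V_*$ for all $a<t<b$, which is \eqref{nut.stationary}.

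The only mildly delicate point — and the step I would be most careful about — is the passage from ``upper Dini derivative $\le 0$ everywhere'' to ``nonincreasing,'' since \eqref{Brakke.ineq} only controls a one-sided limsup and $t\mapsto\nu^t_*(\chi)$ is not assumed continuous a priori. The clean way around this is to note that \eqref{nut.decr} already gives monotonicity (hence, at almost every $t$, continuity and differentiability) of $t\mapsto\nu^t_*(M)$, and that for $\chi$ with $0\le\chi\le 1$ the maps $t\mapsto\nu^t_*(\chi)$ and $t\mapsto\nu^t_*(1-\chi)$ are each controlled by $\nu^t_*(M)$; combining with the Dini-derivative bound for both $\chi$ and $1-\chi$ and the constancy of $\nu^t_*(M)$ pins down continuity and then constancy of $t\mapsto\nu^t_*(\chi)$. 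One could alternatively invoke whichever regularity statement for Brakke flows is available in the references (e.g.\ the almost-everywhere differentiability of $t\mapsto\nu^t_*(\chi)$) to shortcut this; since the lemma is only an auxiliary fact, I expect the authors simply cite such a statement or give the two-line Dini argument. No step here presents a genuine obstacle; the content is entirely in reading off the equality cases of the Brakke inequality.
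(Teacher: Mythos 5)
Your argument is correct and follows essentially the same route as the paper: take $\chi\equiv 1$ to extract $H=0$ and stationarity, then play a nonnegative test function $\chi$ off against a complementary $c-\chi$ to upgrade "nonincreasing" (from the Brakke inequality with $H=0$) to "constant" on $(a,b)$. The paper is silent about the Dini-derivative subtlety you flag, asserting monotonicity directly from \eqref{Brakke.ineq}, so your extra caveat is a harmless elaboration rather than a divergence.
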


\begin{proof}

Clearly, if \eqref{nut.const} holds, then by taking $\chi = 1$ in \eqref{Brakke.ineq}, we find that $ H = 0$ a.e. in $\Sigma^t_\nu$ for every $t\in (a,b)$.
It follows that
$\nu^t_*$ is stationary for such $t$.
It is also easy to see that $t\mapsto \nu^t_*$ is constant for $t\in (a,b)$.
Indeed, given any nonnegative $\chi_1\in C^2(M)$, choose $\chi_2\in C^2(M)$ such that 
$\chi_1+\chi_2$ is constant on $M$. Then it
follows from \eqref{nut.const} that
\[
\int_M(\chi_1+\chi_2)\,d\nu^t_* = \int_M \chi_1 \,d\nu^t_* + \int_M \chi_2\, d\nu^t_*= c \nu^t_*(M) \mbox{ is constant }
\mbox{ for }t\in (a,b).
\]
On the other hand, since $ H = 0$, it follows from \eqref{Brakke.ineq} that 
\[
  \int_M \chi_j \,d\nu^t_* \mbox{ is nonincreasing  for $j=1,2$, for $t\in (a,b)$.}
\]
These together imply that $t\mapsto  \int_M \chi_j \,d\nu^t_*$ is constant for $j=1,2$.
Since this holds for all nonnegative $\chi_1\in C^2(M)$, it easily follows that
$\nu^t_*$ does not depend on $t\in (a,b)$, proving \eqref{nut.stationary}.
\end{proof}

We will make heavy use of results from a paper of Allard and Almgren \cite{AA} on stationary $1$-dimensional varifolds with positive density in a Riemannian manifold. Among other results, they prove  that a stationary $1$-d varifold  with density bounded away from $0$ is supported on a finite or countable union of geodesic segments terminating at singular points. From these singular points multiple segments emanate, with a balance condition on the weighted sum, at each singular point, of the tangent vectors generating the geodesics that meet there. Other results from \cite{AA} will be cited as the need arises.

\subsection{Asymptotic analysis of the Ginzburg-Landau heat flow }

As a last preliminary, we state a recently established extension to the Riemannian setting of
a theorem of  Bethuel, Orlandi and Smets \cite{BOS}, who built on prior work of a number of authors, including \cite{Ilmanen, AmbrosioSoner, LinRiv3}.

The theorem quoted below is  proved in \cite{Colinet}.

\begin{theorem}\label{T:BOS}Assume that $(N,h)$ is a closed Riemannian manifold of dimension $n\ge 3$.
Let $u_\ep: N\times [0,\infty)\to \C$ solve the Ginzburg-Landau heat flow
\[
\pp_t u_\ep - \Delta u_\ep + \frac 1{\ep^2}(|u_\ep|^2-1)u_\ep = 0 \qquad\mbox{ on }N\times(0,\infty)
\]
with initial data $u_\ep(x,0)= u_\ep^0(x)$. Assume that there exists  $M_0>0$ such that 
\[
\EUep(u_\ep^0) \le M_0.
\]
For every $t\ge 0$, let $\mu_\ep^t$ be the measure on $N$ defined by
\[
\mu_\ep^t(A) = \int_A \frac{e_\ep(u_\ep(\cdot, t))}{\logeps}\, \emph{\vol}
 \qquad\mbox{ for every Borel }
A\subset N.
\]
Then after passing to a subsequence (still denoted simply by $\ep$),
there exist measures
$\mu_*^t\in\calM(N)$ for every $t>0$ such that
\[
\begin{aligned}
\mu_\ep^t \rightharpoonup \mu_*^t \quad &\mbox{weakly as measures for every }t > 0.
\end{aligned}
\]

Moreover, there exists a smooth function $\Phi_*$ solving the heat equation on $N
\times(0,\infty)$  and a family of measures $(\nu^t_*)_{t>0}$ on $N$, such that for every $t> 0$
\[
\mu_*^t =  \frac 12 |\nabla \Phi_*(x,t)|^2\,\emph{\vol} + \nu^t_*
\]
with $\nu^t_*$ taking the form
\beq\label{nustar.form}
\nu_*^t =\Theta_*(x,t) \calH^{n-2}\rest \Sigma^t_\nu
\eeq
where $\Sigma_\nu^t$ is an $(n-2)$-dimensional rectifiable subset of $N$ and $\Theta_*$
is a bounded measurable function. In addition, there exists a 
function $\eta:(0,\infty)\to (0,\infty)$ such that 
\beq\label{ldb}
\Theta_*(x,t)  = \lim_{r\to 0} \frac {\nu^t_*(B_{r}(x))}{\omega_n r^{n-2}} \ge \eta(t) 
\eeq
for $\calH^{n-2}$ a.e.  $x\in \Sigma^t_\nu$, for a.e. $t>0$.
Finally, the family $(\nu_*^t)_{t>0}$ is a Brakke flow.
\end{theorem}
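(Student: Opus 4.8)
The plan is to adapt the strategy of Bethuel--Orlandi--Smets \cite{BOS} (itself building on \cite{Ilmanen, AmbrosioSoner, LinRiv3}) to the Riemannian manifold $(N,h)$; the genuinely new points are the curvature and metric-variation error terms that appear in the monotonicity formula and in the energy identities, and carrying out the geometric-measure-theoretic structure results intrinsically (in practice, in normal-coordinate charts, where the heat flow becomes a variable-coefficient perturbation of the Euclidean one). First, since $h$ is time-independent, pairing the flow with $\pp_t u_\ep$ gives the dissipation identity
\[
\tfrac{d}{dt}\EUep(u_\ep(\cdot,t)) = -\tfrac{1}{\pi\logeps}\int_N |\pp_t u_\ep|^2 \,\vol \le 0 ,
\]
so $\EUep(u_\ep(\cdot,t))\le M_0$ for all $t$ and $\int_0^\infty\!\!\int_N |\pp_t u_\ep|^2 \le \pi\logeps\, M_0$. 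The core local tool is a parabolic monotonicity formula: fixing $(x_0,t_0)$ and letting $\rho_{(x_0,t_0)}$ be the (approximate) backward heat kernel of $(N,h)$ based at $(x_0,t_0)$, which on small parabolic scales obeys Gaussian bounds and gradient estimates with $O(1)$ curvature corrections, one shows that with a fixed cutoff $\chi_0$ near $x_0$ the quantity
\[
\EUep(x_0,t_0;r) := \frac{r^{2}}{\pi\logeps}\int_N e_\ep\big(u_\ep(x,t_0-r^2)\big)\,\rho_{(x_0,t_0)}(x,t_0-r^2)\,\chi_0(x)\,\vol
\]
satisfies $\frac{d}{dr}\big(e^{Cr^{2}}\EUep(x_0,t_0;r)\big)\ge 0$ for $r$ small, once the errors coming from curvature, from $\chi_0$, and from the discrepancy between the true and the approximate heat kernel are all bounded by the ambient energy. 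Hence $\Theta_\ep(x_0,t_0):=\lim_{r\to 0}\EUep(x_0,t_0;r)$ exists; this gives uniform upper density bounds (yielding boundedness of $\Theta_*$) and, together with an $\ep$-regularity/clearing-out lemma (small localized energy at scale $r$ forces $|u_\ep|\to 1$ uniformly on a smaller parabolic cylinder, so the vorticity is cleared out there), it drives everything below.

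Next, passage to the limit and the diffuse/concentrated splitting. For $t$ in a countable dense set the uniform bound $\mu_\ep^t(N)\le \pi M_0$ and a diagonal argument extract a subsequence with $\mu_\ep^t\rightharpoonup\mu_*^t$; convergence is then upgraded to every $t>0$ because $t\mapsto\mu_\ep^t(N)$ is nonincreasing with uniformly bounded variation, so the ``bad'' times form a small set and $\mu_*^t$ extends by one-sided continuity. Away from the concentration set --- at points of small limiting density, where clearing-out applies on a whole parabolic neighborhood --- write $u_\ep\approx e^{i\varphi_\ep}$; the potential $\Phi_\ep$ attached to the phase (essentially a primitive of the canonical harmonic-type $1$-form of $u_\ep$) satisfies an approximate heat equation with errors controlled by the energy, and parabolic compactness yields a limit $\Phi_*$ solving $\pp_t\Phi_* - \Lap_h\Phi_* = 0$ on $N\times(0,\infty)$, hence smooth, with $\tfrac12|\nabla\Phi_*(\cdot,t)|^2\,\vol$ capturing exactly the absolutely continuous part of $\mu_*^t$. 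Set $\nu_*^t := \mu_*^t - \tfrac12|\nabla\Phi_*(\cdot,t)|^2\,\vol\ge 0$. The monotonicity formula gives that $\Theta_*(x,t):=\lim_{r\to 0}\nu_*^t(B_r(x))/(\omega_n r^{n-2})$ exists, is bounded, and for a.e.\ $t$ is bounded below by a constant $\eta(t)>0$ at $\calH^{n-2}$-a.e.\ point of its support, which is \eqref{ldb}; a rectifiability criterion of Preiss type, applied in charts and using finiteness of the upper density together with positivity of the lower density, then shows that $\Sigma_\nu^t:=\{\Theta_*(\cdot,t)>0\}$ is $(n-2)$-rectifiable and $\nu_*^t=\Theta_*(x,t)\,\calH^{n-2}\rest\Sigma_\nu^t$ as in \eqref{nustar.form}.

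Finally, the Brakke property. One must (a) produce the generalized mean curvature $H(\cdot,t)$: testing the stationarity identity for the Ginzburg--Landau stress--energy tensor against a $C^1$ vector field $X$ leaves the remainder $\tfrac1{\pi\logeps}\int \pp_t u_\ep\cdot\nabla_X u_\ep$, which is $O(1)$ in $L^1_{loc}$ in $t$ by the dissipation bound, plus curvature terms that are $o(1)$; passing to the limit identifies $\delta\nu_*^t$ with an $L^2(\nu_*^t)$ (hence $\nu_*^t$-integrable) vector field $H(\cdot,t)$. And (b) one must establish the Brakke inequality \eqref{Brakke.ineq}: for nonnegative $\chi\in C^2(N)$ compute $\tfrac{d}{dt}\int\chi\,d\mu_\ep^t = -\tfrac1{\pi\logeps}\int\chi\,|\pp_t u_\ep|^2 + \tfrac1{\pi\logeps}\int(\nabla\chi\cdot\nabla u_\ep)\cdot\pp_t u_\ep + (\text{curvature terms})$, integrate in $t$, pass to the limit, subtract the exact identity satisfied by the smooth diffuse part $\tfrac12|\nabla\Phi_*|^2\,\vol$, and read off \eqref{Brakke.ineq} for $\nu_*^t$. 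The main obstacle --- already the crux in the Euclidean case, now compounded by the geometry --- is step (b): one must show that no ``cross term'' between the diffuse and concentrated energies survives in the limit, which requires the clearing-out lemma to decouple the two on parabolic neighborhoods of a.e.\ point, and one must verify that every curvature-induced error (in the monotonicity formula, in the first-variation remainder, and in $\tfrac{d}{dt}\int\chi\,e_\ep$) is genuinely lower order, i.e.\ bounded by the ambient energy times $o(1)$, so that it disappears after dividing by $\logeps$ and passing to the limit. Controlling these errors uniformly, together with implementing the clearing-out lemma on a manifold, is where the bulk of the work lies.
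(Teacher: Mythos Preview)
The paper does not prove Theorem~\ref{T:BOS}; it is explicitly stated there as a result ``proved in \cite{Colinet}'', a companion paper, and no argument is given in the present text. So there is no in-paper proof to compare your proposal against.

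That said, your outline is consistent with what the paper indicates the proof should be: an extension to closed Riemannian manifolds of the Bethuel--Orlandi--Smets analysis \cite{BOS}, with the expected ingredients (parabolic monotonicity with curvature corrections, clearing-out, diffuse/concentrated splitting via a phase solving the heat equation, Ambrosio--Soner-type rectifiability, and the Brakke inequality). As a sketch this is reasonable, but it is only a sketch; each of the steps you name (the monotonicity formula on a manifold, the clearing-out lemma in charts, the decoupling of diffuse and concentrated parts, and the control of curvature errors in the first-variation and Brakke computations) requires substantial work that you have flagged but not carried out. If you intend this as a summary pointing to \cite{Colinet}, it is appropriate; if you intend it as a self-contained proof, it is far from complete.
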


\section{A non-existence result for stationary $1$-varifolds near a non-degenerate geodesic}

The proof of our main result hinges crucially on showing there is no stationary varifold sitting over a $1$-current that is nearby $T_{\gamma}$, the $1$-current associated with the non-degenerate geodesic $\gamma$. While the non-degeneracy assumption \eqref{gamma.def} easily precludes the existence of another nearby smooth geodesic, it is the need to rule out proximity in the weaker sense of \eqref{rflat}, \eqref{Vstarontop} below and within the larger class of varifolds that makes the result below much more challenging to establish.

\begin{proposition}
\label{lem:noVstar}
Let $T_\gamma$ be the $1$-current  in $M$ corresponding to integration
over the nondegenerate geodesic  $\gamma$, and let $\eta>0$ be given. Then there exists $r_0>0$ depending on $M, \gamma$, and $ \eta$,
such that for $0<r<r_0$ there is no stationary $1$-dimensional rectifiable varifold $V_*$ and  $1$-current $J_1\in \calR_1 \cap \calF_1'(M)$ satisfying the conditions
\beq\label{Vstar}
V_* = \Theta_*(x)\calH^1\rest \Sigma_{*},
\eeq
for $\Sigma_{*}\;1$-rectifiable and $\Theta_*\ge \eta>0\;\calH^1$  
 a.e.  in $\Sigma_{*}$,
 \vskip.05in
\beq
\| J_1 - T_\gamma\|_\calF=r,\label{rflat}
\eeq
and
\beq\label{Vstarontop}
 \qquad V_* \ge \| J_1\|, \qquad V_*(M)\le L.
\eeq
\end{proposition}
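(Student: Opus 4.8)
The plan is to argue by contradiction, assuming that for every $r_0 > 0$ there is some $0 < r < r_0$ admitting such a pair $(V_*, J_1)$, and to extract a limit as $r \to 0$ that contradicts the nondegeneracy of $\gamma$. So I would take a sequence $r_k \to 0$ with associated varifolds $V_*^{(k)} = \Theta_*^{(k)} \calH^1 \rest \Sigma_*^{(k)}$ and currents $J_1^{(k)}$ satisfying \eqref{Vstar}--\eqref{Vstarontop}. Since $\| J_1^{(k)} - T_\gamma \|_\calF = r_k \to 0$, the currents $J_1^{(k)}$ converge to $T_\gamma$ in the flat norm. The mass bound $\mass(V_*^{(k)}) = V_*^{(k)}(M) \le L$ together with the density lower bound $\Theta_*^{(k)} \ge \eta$ gives, by standard compactness for varifolds with bounded mass, a subsequential limit $V_*^{(\infty)}$ in the weak topology on $\calM(PM)$; the limit is again a stationary varifold (stationarity passes to limits under weak convergence), and by the lower density bound $\eta$ combined with the structure theory for stationary $1$-varifolds from \cite{AA}, $V_*^{(\infty)}$ is rectifiable with density $\ge \eta$ and supported on a finite union of geodesic segments meeting at singular points with the balancing condition. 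From $V_*^{(k)} \ge \| J_1^{(k)} \|$ and the flat convergence $\| J_1^{(k)} \| \to \| T_\gamma \| = \calH^1 \rest \Gamma$, I would deduce $V_*^{(\infty)} \ge \calH^1 \rest \Gamma$ (lower semicontinuity of the left side against the convergence of the right side), and from $V_*^{(\infty)}(M) \le \liminf V_*^{(k)}(M) \le L = \calH^1(\Gamma)$ I conclude $V_*^{(\infty)} = \calH^1 \rest \Gamma$ exactly, i.e. the limiting varifold is precisely the multiplicity-one varifold of the geodesic.

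This is where the nondegeneracy must be used in its full strength: the contradiction should \emph{not} simply be that the limit equals $T_\gamma$ (that is consistent), but rather a quantitative statement extracted from the approach. The cleanest route is to show that for $k$ large, $V_*^{(k)}$ is forced by the density bound and the AA structure theory to be supported on a collection of closed geodesics (no free endpoints are allowed because the density is bounded below and $V_*^{(k)}$ is stationary with bounded mass, so any singular point has $\ge 3$ segments emanating, contributing extra mass; with total mass $\le L$ and the support flat-close to $\Gamma$ this eventually forces $\Sigma_*^{(k)}$ to be a single smooth closed geodesic), and then invoke that a nondegenerate geodesic is isolated among closed geodesics: there is a tubular neighborhood $K_{r_0}$ of $\Gamma$ containing no closed geodesic other than $\Gamma$ itself. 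Since $\| J_1^{(k)} - T_\gamma \|_\calF = r_k$ with $\| J_1^{(k)} \| \le V_*^{(k)}$ supported near $\Gamma$, and $J_1^{(k)} = \partial S^{(k)}$, one shows $\spt V_*^{(k)} \subset K_{\delta_k}$ with $\delta_k \to 0$; for $k$ large the only closed geodesic there is $\Gamma$, forcing $V_*^{(k)} = \calH^1 \rest \Gamma$ with multiplicity $1$ — but then $J_1^{(k)}$ is an integer-multiplicity $1$-current with $\| J_1^{(k)} \| \le \calH^1 \rest \Gamma$ and $J_1^{(k)} = \partial S^{(k)}$ of finite mass, hence $J_1^{(k)}$ is either $0$ or $\pm T_\gamma$; in either case $\| J_1^{(k)} - T_\gamma \|_\calF \in \{0, \|2 T_\gamma\|_\calF\}$, contradicting $\| J_1^{(k)} - T_\gamma \|_\calF = r_k \in (0, r_0)$ for $r_0$ small.

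\textbf{Main obstacle.} The delicate point is showing that the support of $V_*^{(k)}$ is trapped in an arbitrarily thin tube $K_{\delta_k}$ around $\Gamma$ and, more importantly, that $V_*^{(k)}$ is eventually \emph{smooth} — i.e. has no genuine singular branch points — so that the classical isolation of nondegenerate geodesics can be applied. Flat-closeness of $J_1^{(k)}$ to $T_\gamma$ controls $\| J_1^{(k)} \|$ only up to the boundary $\partial S^{(k)}$, and $V_*^{(k)}$ may a priori carry extra ``diffuse'' pieces of stationary varifold far from $\Gamma$ as long as the total mass stays $\le L$; ruling these out requires combining the density lower bound $\eta$ (which quantizes mass contributions and bounds the number of singular points) with a monotonicity/constancy argument forcing any connected component disjoint from the part that supports $\| J_1^{(k)} \|$ to have mass $\ge$ some fixed multiple of $\eta$ times its length — but a stationary $1$-varifold on a closed manifold with small total mass must be empty or a short closed geodesic, and short closed geodesics do not exist below the systole. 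Making this counting argument rigorous, with the AA balancing condition doing the work at singular points, is the technical heart of the proof; everything else is compactness and lower semicontinuity.
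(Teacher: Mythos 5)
Your high-level plan is correct, and your endgame is sound: if one can show $V_*$ is the multiplicity-one varifold $\calH^1\rest\Gamma$, then $\|J_1\|\le\calH^1\rest\Gamma$ together with $J_1\in\calR_1$ and $\partial J_1=0$ forces $J_1\in\{0,\pm T_\gamma\}$, contradicting $\|J_1-T_\gamma\|_\calF=r\in(0,r_0)$ for $r_0$ small. Your deduction that the subsequential limit $V_*^{(\infty)}$ equals $\calH^1\rest\Gamma$ is also essentially correct (one needs to note that flat convergence gives $\liminf_k\mass(J_1^{(k)})\ge L$ by lower semicontinuity, whence $\mass(J_1^{(k)})\to L$ and then $\|J_1^{(k)}\|\rightharpoonup\|T_\gamma\|$). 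And you correctly observe that this limiting statement alone proves nothing: it is consistent with the existence of nontrivial $V_*^{(k)}$ at each scale.

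But the ``main obstacle'' paragraph is exactly where the proof lives, and you have left it as a gap. Your proposed mechanism --- that branch points of the AA decomposition ``contribute extra mass'' and are therefore ruled out by the bound $V_*(M)\le L$ --- is an intuition, not an argument: a $Y$-junction with three very short legs need not cost measurably more length than a straight arc, and you have given no quantitative reason why the total mass must exceed $L$ in the presence of singular points. The paper replaces this heuristic with a genuinely different mechanism. It works at a fixed small $r$ (rather than extracting a limit as $r\to0$, though a compactness-in-$r$ argument does appear internally), and the heart is a Fermi-coordinate slicing argument along $\Gamma$: Step 1 traps $\spt V_*$ in the tube $K_{\sqrt[4]{r}}$ via monotonicity and the lower density bound; Step 2 estimates $\nabla\tau$ and $\mathrm{Hess}(\tau)$; Step 3 shows (by a compactness/contradiction argument) that $\calV_*$-most tangent directions are nearly parallel to $\nabla\tau$; Steps 4--5 use the first-variation identity $\delta V_*(X)=0$ with $X\approx H(\tau)\nabla\tau$ to show that the slice mass function $h$ has essentially constant derivative in $\tau$; Step 6 then shows that a.e.\ transverse slice meets $\spt V_*$ in exactly one point --- if some slice met it twice, the density lower bound $\Theta_*\ge\eta$ plus the fact that some point has $\Theta_*\ge1$ (from Lemma \ref{meslem}) would force $h'\ge 1+\eta/2$ a.e., and integrating over $[0,L]$ would violate $V_*(M)\le L$; and Step 7 deduces $S_{V_*}=\varnothing$ by covering $\Gamma$ with small balls in which the Allard--Almgren local characterization applies, after which Morse--Palais gives $\spt V_*=\Gamma$. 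This slicing argument and the single-intersection property are the missing content in your proposal, and your ``counting at singular points'' idea does not substitute for them, since the density quantization plays a different role there (it forces $h'$ too large on bad slices, not that singular points per se cost extra mass). Filling this gap is the bulk of the proposition's proof.

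Two smaller remarks. First, you assert ``one shows $\spt V_*^{(k)}\subset K_{\delta_k}$'' without argument; this is the monotonicity-formula step (the paper's Step 1) and should be spelled out. Second, a stationary $1$-varifold with small total mass on a closed manifold need not be ``empty or a short closed geodesic'' --- geodesic nets with junctions are also possible --- so that sentence of your obstacle paragraph is not a valid simplification.
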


The starting point of the proof is provided by the
following lemma, established by Mesaric \cite{Mesaric}.

\begin{lemma}\label{meslem}
For $T_\gamma$ as above, let $J_1\in \calR_1\cap \calF_1'(M)$ be a current satisfying \eqref{rflat}, and such
that 
\[
\partial J_1 = 0\qquad\mbox{and } \qquad
\mass(J_1)\le L.
\]
Then provided $r$ is taken sufficiently small, there is a $1$-current $J_{1}^{*}\in\calR_{1}(M)$ such that the support of $J_{1}^{*}$, denoted by $\Gamma^{*}$, consists of a single Lipschitz curve with no boundary satisfying
\beq
\Gamma^{*}\subset{}K_{2\sqrt[3]{r}}\cap\spt(J_{1}),
\eeq
and
\beq\label{EachHeight}
\Gamma^{*}\cap\tau^{-1}(t)\neq\varnothing\text{ for all }t\in\R\slash{}L\Z,
\eeq
(cf. \eqref{y.tau.def} for the definition of $\tau$).\\ 

In addition,
\beq
\mass(J_{1}-J_{1}^{*})=\mass(J_{1})-\mass(J_{1}^{*}),
\eeq
there exists a constant $C_{1}>0$ such that
\beq\label{C1}
\mass(J_{1}^{*})\ge L-C_{1}\sqrt[3]{r},
\eeq
and
\begin{align}
J_{1}^{*}-&T_{\gamma}=\partial{}S^{*}\text{ for some }2\text{-current }S^{*}\text{ with}\\
&\spt(S^{*})\subset\overline{K_{5\sqrt[3]{r}}}\text{ and }\mass(S^{*})<\infty.  \nonumber
\end{align}
\end{lemma}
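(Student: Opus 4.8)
## Proof Proposal for Lemma \ref{meslem}

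The plan is to start from the current $J_1$ living in the thin tubular neighborhood near $\Gamma$ and extract from it a connected ``transversal'' piece $J_1^*$ that wraps once around, then estimate how much mass is lost in the extraction. First I would localize: since $\|J_1 - T_\gamma\|_\calF = r$, there is a $2$-current $S$ with $\partial S = J_1 - T_\gamma$ and $\mass(S) \le r + $ (small). A slicing argument (coarea/slicing for currents, applied to the function $d_\Gamma$) shows that for a generic radius $\rho \sim \sqrt[3]{r}$, the slice $\langle S, d_\Gamma, \rho\rangle$ has mass at most $C r^{2/3}$, and hence the portion of $J_1$ outside $K_\rho$ carries mass at most $C r^{1/3}$; this is where the $2\sqrt[3]{r}$ and $5\sqrt[3]{r}$ scales enter. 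So, up to throwing away a piece of mass $O(r^{1/3})$, we may assume $\spt(J_1) \subset K_{2\sqrt[3]{r}}$, and the discarded piece is disjoint in support from what remains, giving $\mass(J_1 - J_1^*) = \mass(J_1) - \mass(J_1^*)$ once $J_1^*$ is taken to be (a sub-current of) this localized piece.

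Next I would use the structure theory of integer rectifiable $1$-currents with no boundary: such a current decomposes into a countable sum of closed Lipschitz curves (indecomposable components), $J_1 = \sum_k T_{\lambda_k}$, with $\mass(J_1) = \sum_k \mass(T_{\lambda_k})$. Now I push these components forward under the tubular projection $\tau: K_{2\sqrt[3]r} \to \R/L\Z$. Since $J_1 - T_\gamma = \partial S$ with $\spt(S) \subset K_{5\sqrt[3]r}$, pushing forward by $\tau$ and using $\tau_\# T_\gamma = [\R/L\Z]$ (degree one) together with $\tau_\# \partial S = \partial(\tau_\# S) = 0$ in $\R/L\Z$ (a closed $1$-manifold has no boundary currents of the relevant type — more precisely $\tau_\#S$ is a $1$-current on $\R/L\Z$ whose boundary is $\tau_\#J_1 - [\R/L\Z]$, forcing $\tau_\# J_1$ to represent the fundamental class), I conclude $\sum_k \tau_\#(T_{\lambda_k})$ has total degree $1$ around $\R/L\Z$. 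Hence at least one component $\lambda_{k_0}$ has nonzero winding, so $\tau \circ \lambda_{k_0}$ is surjective onto $\R/L\Z$; taking $J_1^* := T_{\lambda_{k_0}}$ (or its appropriate multiplicity-one reduction) gives \eqref{EachHeight}, and $\Gamma^* = \spt(T_{\lambda_{k_0}}) \subset K_{2\sqrt[3]r} \cap \spt(J_1)$ is a single closed Lipschitz curve.

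For the mass bound \eqref{C1}: because $\Gamma^*$ hits every level set $\tau^{-1}(t)$ and lies within $K_{2\sqrt[3]r}$, its length is bounded below by the length of its ``shadow'' on $\Gamma$, which is at least $L$, minus a correction controlled by the transversal excursions; in the tubular coordinates \eqref{TubularCoordinates} the metric is $dt^2 + (\text{flat in }y) + O(\sqrt[3]r)$, and $\int |\gamma'| \ge \int |dt\text{-component}| \ge L$ up to a factor $(1 - C\sqrt[3]r)$, yielding $\mass(J_1^*) \ge L - C_1 \sqrt[3]{r}$ after absorbing constants. Finally, for the last assertion, $J_1^* - T_\gamma$ is a closed $1$-current (both are boundaryless) supported in $\overline{K_{5\sqrt[3]r}}$; since this neighborhood deformation-retracts onto $\Gamma$ and $J_1^*$, $T_\gamma$ represent the same class there (both have winding one and the difference pushes to zero under $\tau$, hence is a boundary in $H_1$ of the tube), we get $J_1^* - T_\gamma = \partial S^*$ with $\spt(S^*) \subset \overline{K_{5\sqrt[3]r}}$ and finite mass, by choosing $S^*$ via the explicit homotopy (cone construction) within the tube.

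The main obstacle I anticipate is the selection step combined with the mass accounting: ensuring simultaneously that (i) the retained component is a \emph{single} closed Lipschitz curve with full winding, (ii) the removed part $J_1 - J_1^*$ is genuinely ``mass-disjoint'' so that $\mass(J_1 - J_1^*) = \mass(J_1) - \mass(J_1^*)$ holds exactly, and (iii) the lower bound $L - C_1\sqrt[3]r$ survives — this last requires carefully controlling the length distortion of $\tau$-fibers in the tubular coordinates and checking that multiplicity-one reduction does not destroy the winding. The decomposition of boundaryless rectifiable $1$-currents into closed curves (and keeping track of multiplicities and mass additivity) is the technical heart, and aligning the radii $2\sqrt[3]r$, $5\sqrt[3]r$ so the slicing estimates close is the fiddly part. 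This is presumably why the lemma is attributed to \cite{Mesaric} rather than proved from scratch here.
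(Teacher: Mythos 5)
Your overall strategy --- Federer's decomposition of the boundaryless integral $1$-current $J_1$ into closed curves, a winding argument in the tube, and tubular-coordinate length estimates --- matches the tools the paper attributes to \cite{Mesaric} and \cite{JSt}, so you have identified the right ingredients. The gap is in the localization step. From the slicing bound $\mass\langle S, d_\Gamma, \rho\rangle = O(r^{2/3})$ for a generic $\rho\sim\sqrt[3]{r}$ you conclude that $J_1$ carries mass $O(r^{1/3})$ outside $K_\rho$. This does not follow: slicing controls the slice, not the restriction. Indeed $J_1\rest(M\setminus K_\rho) = \partial\bigl(S\rest(M\setminus K_\rho)\bigr) + \langle S, d_\Gamma, \rho\rangle$, and the mass of the boundary term is not controlled by $\mass(S)$, so the exterior mass of $J_1$ is a priori unbounded. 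Moreover, once you truncate $J_1$ to the tube, the truncated piece is no longer boundaryless (its boundary is the slice $\langle J_1, d_\Gamma, \rho\rangle$), so Federer's decomposition into \emph{closed} curves --- and therefore your $\tau_\#$-degree argument --- cannot be invoked in the form you state.

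The references, as sketched in the paper's remark, reverse the order so as to avoid this circularity: decompose the full boundaryless $J_1$ into closed curves first; slice by $\tau$ (rather than by $d_\Gamma$) and use $\|J_1 - T_\gamma\|_\calF = r$ to show that for $t$ in a set of measure $L - O(\sqrt[3]{r})$ the slice $\langle J_1,\tau,t\rangle$ is a single multiplicity-one point near $\gamma(t)$; identify the unique decomposition component $\Gamma^*$ carrying those points (a second such component would already contribute mass close to $L$, violating $\mass(J_1)\le L$); and only \emph{then} deduce that the remaining components have small total mass and that $\Gamma^*$ stays in $K_{2\sqrt[3]{r}}$. Your $\tau_\#$-degree computation is a reasonable substitute for the slice-by-$\tau$ analysis, but it presupposes exactly the localization that must first be produced by the slicing. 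You correctly flagged the mass accounting as the ``fiddly part''; the precise issue is that it must be carried out on the intact boundaryless $J_1$, not on a truncation of it.
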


This is demonstrated in Lemma $4.4$ and comments following Lemma $4.6$ of \cite{Mesaric}. The proof is an adaptation to the Riemannian setting of arguments from \cite{JSt}, Lemma 5.5.
The idea is that \eqref{rflat} and the definition of the flat norm
imply that a large set of transverse slices to $\Gamma$ must intersect
$J_1$ at exactly one point, and this point must be close to $\Gamma$. Behavior of $J_1$ on other slices is constrained by the assumption that $\partial J_1=0$ and the mass bound. The proof also uses Federer's decomposition of integral $1$-currents, \cite{Federer}, 4.2.25.

\vskip.1in
\noindent{{\it Proof of Proposition \ref{lem:noVstar}}\\
\noindent
{\em{Step 1:}}
First we show that the rectifiable varifold $V_{*}$ does not have any mass outside of $K_{\sqrt[4]{r}}$.
The idea is that if this fails, then the monotonicity formula and \eqref{C1} would contradict the assumption
$\mass(V_*) \le L$. This argument relies crucially on the uniform lower density bound for $V_*$.\newline
\newline
We recall that the Hessian Comparison Theorem, see Theorem $6.6.1$ of \cite{Jos}, gives us that
if $\mu>0$ is an upper bound on the absolute value of the scalar curvature over $M$ and $r>0$ is chosen so that
\[
r<\frac{1}{2}\min\left\{r_{0},\frac{\pi}{2\sqrt{\mu}}\right\},
\]
then for each $p\in{}M$ and all $x\in{}B_{r}(p)$ and $v\in{}T_{x}M$ we have
\beq \label{HessComp}
\sqrt{\mu}\rho(x)\cot\left(\sqrt{\mu}\rho(x)\right)\left|v\right|^{2}\le\text{Hess}_{g}\left(\frac{\rho^{2}}{2}\right)(v,v)
\le\sqrt{\mu}\rho(x)\coth\left(\sqrt{\mu}\rho(x)\right)\left|v\right|^{2}
\eeq
where $\rho(x)=d(x,p)$.
In view of the lower density bound $\Theta_* \ge \eta$,  $V_*$ almost everywhere, it follows from
a Riemannian version of the Monotonicity Formula (established with different notation in \cite{AA} in item $(5)$ of Theorem $1$ on pg. $87$)
that there exists $r_{con}(M)>0$ such that for every $0<s<r_{con}$,
\beq
\eta\le\frac{1}{2s}\int_{B_{s}(p)}\!{}\text{Hess}_{g}\left(\frac{\rho^{2}(x)}{2}\right)(v,v)dV_{*}
\eeq
for $V_*$-almost every $p\in M$, where
$v$ is a unit vector in $\mbox{ap} T_x \Sigma_*$. (Clearly the value of $\text{Hess}_{g}\left(\frac{\rho^{2}(x)}{2}\right)(v,v)$
does not depend on which unit tangent is chosen.)

It follows from $\eqref{HessComp}$ that
\[
2s\eta\le(1+\mu{}s^{2})V_{*}\left(B_{s}(p)\right)
\]
for all $0<s<\frac{1}{2}\min\left\{r_{0},r_{con},\frac{\pi}{2\sqrt{\mu}}\right\}$ and $V_*$-almost every $p\in M$.
We conclude that for all $0<s<\frac{1}{2}\min\left\{r_{0},r_{con},\frac{\pi}{2\sqrt{\mu}},1\right\}$ we have
\beq\label{LowerBoundBall}
V_{*}\left(B_{s}(p)\right)\ge\frac{2s\eta}{1+\mu}, \qquad \mbox{ for } V_* \mbox{ {\em a.e. }}p\in M.
\eeq

We now use this to prove that 
if  $r$ is chosen sufficiently small, then
\begin{equation}
V_{*}\left(M\setminus{}K_{\sqrt[4]{r}} \right)=0.\label{none}
\end{equation}
To verify \eqref{none}, suppose to the contrary; then 
$V_{*}\left(M\setminus{}K_{\sqrt[4]{r}} \right)>0$
 and so there is a point $p$ of
$\spt(V_{*})$ in $M\setminus{}K_{\sqrt[4]{r}}$ for which $\eqref{LowerBoundBall}$ holds.
By $\eqref{LowerBoundBall}$ we have that
\[
V_{*}\left(B_{\frac 12\sqrt[4]{r}} (p)\right)\ge \frac{\eta}{1+\mu} \sqrt[4]{r},
\quad\qquad\mbox{ if }r<\min\left\{r_{con},r_{0},\frac{\pi}{2\sqrt{\mu}},1\right\}^{4}.
\]
By shrinking $r_0$ if necessary, we may  assume that
$B_{\frac 12 \sqrt[4]{r}}(p)\cap K_{2\sqrt[3]{r}}=\varnothing$.
Hence, appealing to \eqref{C1}, we find that
\[
L\ge{}V_{*}(M)\ge{}V_{*}(K_{2\sqrt[3]{r}})+
V_{*}\left(B_{\frac 12\sqrt[4]{r}} (p)\right)
>L-C_{1}\sqrt[3]{r}+\frac{\eta}{1+\mu} \sqrt[4]{r}.
\]
Choosing $r$ smaller if necessary,  depending on $C_1,\eta, \mu$, this yields a contradiction.
We conclude that \eqref{none} holds.
Since $\left\|J_{1}\right\|\le{}V_{*}$, we remark that  $J_{1}$ is also supported in
$K_{\sqrt[4]{r}}$.
\newline

\vskip.1in
{\em{Step 2:}}
Next we demonstrate that
\beq\label{LeadingOrderTau}
\left|\nabla\tau(x)\right|=1+O(\sqrt[4]{r}),\qquad
\left[\text{Hess}_{g}(\tau)(x)\right]\left(v,v\right)=O(\sqrt[4]{r})
\eeq
where $v\in{}T_{x}M$ is a unit vector, $x\in{}K_{\sqrt[4]{r}}$, and $\tau$ is the
mapping defined in \eqref{y.tau.def}.\newline
\newline
We prove only the statement about the Hessian, as the gradient estimate follows by similar
arguments.
In coordinates introduced by $\psi:B_{\sqrt[4]{r}}(0)\times(0,L)\to{}K_{\sqrt[4]{r}}$  as defined in
\eqref{TubularCoordinates}, we can write $\tau$ as
\[ 
\tilde{\tau}(y,t)=t
\] 
where $\tilde{\tau}=\tau\circ\psi$. 
These are what are called Fermi coordinates, and a basic fact, proved for example in
Section V of \cite{MM},
is that
the vectors $\Xi_1,\Xi_2$ in \eqref{TubularCoordinates} can be chosen so that
all Christoffel symbols vanish along the central geodesic, that is, when $y=0$:
\[
\Gamma^k_{ij}(0,t) = 0 \quad\mbox{ for }i,j,k=1,\ldots, 3
\]
where  $x^{k}=y^{k}$ for $k=1,2$, and $x^{3}=t$. 
In general, the expression for the Hessian in coordinates is
\[ 
\text{Hess}_{g}(\tau)(\psi(0,t))=\sum_{i=1}^{3}\sum_{j=1}^{3}
\left(\frac{\partial^{2}\tilde{\tau}}{\partial{}x^{i}\partial{}x^{j}}(0,t)-\sum_{k=1}^{3}\Gamma_{ij}^{k}(0,t)
\frac{\partial\tilde{\tau}}{\partial{}x^{k}}(0,t)\right)\mathrm{d}x^{i}\otimes\mathrm{d}x^{j},
\] 
see for example \cite{Jos}, Definition 4.3.5.
By combining these, we readily deduce that 
\[
\text{Hess}_{g}(\tau)(\psi(0,t))=0
\]
and thus
that $\text{Hess}_{g}(\tau)(\psi(y,t))_{ij}=O_{t}(\left|y\right|)$
for $1\le i,j\le 3$. The Hessian estimate in \eqref{LeadingOrderTau} follows directly.
\newline

\vskip.1in
{\em{Step 3:}}
Let $\calV_*$ be the $1$-varifold associated as in \eqref{VvscalV} to the rectifiable $1$-varifold $V_*$.
We next demonstrate that for each $\delta>0$ there is $r_{1}>0$ such that if $0<r<r_{1}$ in \eqref{rflat} and hence in \eqref{none}, then

\beq
\calV_{*}\left( \left\{ (x,\xi)\in{}PM: |\nabla_\xi\tau(x)|^2  \le (1-\delta)^2  \right\}\right)<\delta ,
\label{bigangle}
\eeq
where we recall our convention that a generic element of $PM$  --- that is, a line in $T_xM$ for some $x\in M$ --- is represented by a pair $(x,\xi)$, where $\xi$ is a unit vector
in $T_xM$ spanning the given line, see \eqref{repPM}. 
This will establish that most tangent vectors to the support of $V_{*}$ are, according to the measure $\calV_{*}$, nearly parallel to $\nabla\tau$.

We suppose toward a contradiction that there is a $\delta>0$, a sequence $\left(r_{k}\right)_{k\in\N}$ tending to $0$ from the right, and a sequence of
stationary rectifiable varifolds $\left(V_{k}\right)_{k\in\N}$ on $M$ satisfying the hypotheses of Proposition \ref{lem:noVstar} with $r$ replaced by $r_k$ in \eqref{rflat},
and 
such that the associated
$1$-varifolds $\calV_k$ satisfy}
\beq\label{convk}
\calV_{k}\left( \left\{ (x,\xi)\in{}PM: |\nabla_\xi\tau(x)|^2  \le (1-\delta)^2  \right\}\right)\ge \delta ,
\eeq
for all $k\in\N$. In particular we have
\beq\label{VarifoldSequenceProperties}
V_{k}(M)\le{}L,\hspace{10pt}
\spt(V_{k})\subset{}K_{\sqrt[4]{r_{k}}},\hspace{10pt}
\text{and }\;\Theta_{V_k}(x)\ge\eta\text{ for }x\in\spt(V_{k}).
\eeq
Since $\left(\calV_{k}\right)_{k\in\N}$ is a sequence of stationary rectifiable varifolds we may combine $\eqref{VarifoldSequenceProperties}$ with the compactness result of Theorem $42.7$, pg. 247 of \cite{LS} to conclude that there is a subsequence $\left(\calV_{k_{j}}\right)_{j\in\N}$ and a rectifiable varifold $V$ with associated multiplicity $\Theta_V$
such that
\begin{enumerate}
\item\label{WeakConvergence}
$\calV_{k_{j}}\rightharpoonup\calV$ weakly as measures
\item\label{LimitLowerBound}
$\Theta_V(x)\ge\eta$ on $\spt(V)$
\item\label{VarifoldSemicontinuity}
$\delta{}V(W)\le\liminf_{j\to\infty}\delta{}V_{k_{j}}(W)$ for all $W\subset\subset{}M$
\end{enumerate}
It follows from $\eqref{VarifoldSemicontinuity}$ and the fact that each $\calV_{k_{j}}$ is stationary that $\calV$ is also stationary.
Then from $\eqref{WeakConvergence}$ and the fact that $\spt(V_{k_{j}})\subset{}K_{\sqrt[4]{r_{k_{j}}}}$ we conclude that $\spt(V)\subset\Gamma$.
Next, we observe that, due to $\eqref{EachHeight}$, each $V_{k_{j}}$ has support that contains a closed curve that meets every level set of $\tau$.
Hence, $\spt(V)=\Gamma$ as a result of $\eqref{WeakConvergence}$.
Observe that since $\calV$ is a stationary varifold with density bounded below and $\spt(V)=\Gamma$, then by the Theorem on page $89$ of \cite{AA} we have that $V$ is simply a constant multiplicity multiple of the stationary varifold $V_{\Gamma}$ associated with the geodesic $\gamma$. 
Applying the weak convergence \eqref{WeakConvergence} to \eqref{convk}, however, we see that
\[
\calV\left( \left\{ (x,\xi)\in{}PM: |\nabla_\xi\tau(x)|^2  \le (1-\delta)^2  \right\}\right)\ge\delta ,
\]
an impossibility given that all tangent vectors $\xi$ along $\Gamma$ coincide with $\pm \nabla \tau$.
\vskip.1in
\noindent{\em{Step 4:}}
Next we introduce three sets corresponding to slices normal to the central geodesic that are in some sense bad. We will argue that two of them correspond to sets of $t$-values of measure zero while the third is of small measure.

We introduce the first such set,  $\calB_1$, through the function $h:(0,L)\to\R$ given by
\[
h(t)=\int_{\pi^{-1}\left(\left\{0\le\tau\le{}t\right\}\cap{}K_{\sqrt[4]{r}}\right)}\!{}\left|\nabla_{\xi}\tau(x)\right|^{2}d\calV_{*}(x,\xi),
\]
with  $\calB_1$ defined by
\beq
\calB_{1}:=\left\{a\in(0,L):\;h\;\text{is not differentiable at}\;a\right\}.\label{B0}
\eeq
Since $h$ is non-decreasing, it is differentiable $\calL^{1}$-almost everywhere and consequently $\calL^{1}(\calB_{1})=0$.

Now we recall that the singular set $S_{V_{*}}$, as defined in \cite{AA}, is the set of points of $M$ near which $\Theta_{V_{*}}$, restricted
to $\spt(V_{*})$, is not constant.
Then we introduce the set $\calB_{2}$ as the set of slices meeting the singular set:
\[
\calB_{2}:=\left\{t\in(0,L):\left\{\tau=t\right\}\cap{}S_{V_{*}}\neq\varnothing\right\}
 =  \{ \tau(x) : x\in S_{V_*}\cap K_{\sqrt[4]{r}} \} .
\]
We claim that $
\calL^{1}\left(\calB_{2}\right)=0
$
as well.

To this end, we note that in the remark following the theorem on page $89$ of \cite{AA}, it is stated that
\beq\label{VarifoldMeasureSingularSet}
V_{*}\left(S_{V_{*}}\right)=0,
\eeq
and so by $\eqref{ldb}$ and $\eqref{VarifoldMeasureSingularSet}$ we have that
\[
0=\int_{S_{V_{*}}}\!{}\Theta(x)d\calH^{1}(x)\ge\eta\,\calH^{1}\left(S_{V_{*}}\right).
\]
We conclude that
\beq\label{HausdorffMeasureSingularSet}
\calH^{1}\left(S_{V_{*}}\right)=0.
\eeq
Since $\calB_2$ is the image of a subset of $S_{V_*}$ by the Lipschitz map $\tau:K_{\sqrt[4]r}\to (0,L)$,
it follows that $\calL^{1}\left(\calB_{2}\right)=0$ as claimed.

The final `bad' set of slices is $\calB_{3}$ defined by
\[
\calB_{3}:=\left\{t\in(0,L):\;\exists
(x,\xi)\in\spt(\calV_{*})\;s.t.\;|\nabla_\xi\tau(x)|^2<(1-\delta)^2,\tau(x)=t\right\}.
\]
Replacing the role of the equality \eqref{VarifoldMeasureSingularSet} by the inequality \eqref{bigangle} in the argument above, the same line of reasoning goes to show that there is a constant $C_{2}>0$ such that
\[
\calL^{1}\left(\calB_{3}\right)<\frac{C_{2}\delta}{\eta},
\]
where $r$ is chosen sufficiently small.
\newline

{\em{Step 5:}}
We now use the results obtained in Steps $1$ and $2$ to show that, for $r$ and $\delta$ chosen sufficiently small, and
$a,b\in(0,L)\setminus\calB_{1}$ we have
\[
h'(b)=h'(a)+O(\sqrt[4]{r})
\]
where $h$ is as defined in Step 4.
\newline
\newline
For $s$ small and positive, we define the function
\[
H_{a,b;s}(t)=
\begin{cases}
1&\text{if }a+s<t<b-s\\
0&\text{if }0<t<a\text{ or }b<t<L\\
\frac{t-a}{s}&\text{if }a\le{}t\le{}a+s\\
\frac{b-t}{s}&\text{if }b-s\le{}t\le{}b,
\end{cases}
\]
and we let $X$ be a smooth vector field on $M$ such that $X(x) = H_{a,b;s}(\tau(x))\nabla\tau(x)$ for $x\in K_{r_{0}\slash2}$.
The fact that $\calV_*$ is stationary implies that $\delta V_*(X)=0$, cf. \eqref{first.var},
and this means that
\begin{eqnarray}
\int_{\pi^{-1}\left(K_{\sqrt[4]{r}}\right)}&&\!{}H_{a,b;s}'(\tau)\left|\nabla_{\xi}\tau\right|^{2}d\calV_{*}(x,\xi)
+\nonumber\\
&&\int_{\pi^{-1}\left(K_{\sqrt[4]{r}}\right)}\!{}H_{a,b;s}(\tau)\text{Hess}_{g}(\tau)\left(\xi,\xi\right)d\calV_{*}(x,\xi)=0
,\label{VariationCondition}
\end{eqnarray}
We have used Step $1$ to obtain that $V_{*}$ is concentrated on $K_{\sqrt[4]{r}}$.
Next, we use Step $2$, $V_{*}(M)\le{}L$, and the fact that
$\left\|H_{a,b;s}\right\|_{L^{\infty}(\mathbb{R})}=1$ to conclude that
\beq\label{HessianBound}
\int_{\pi^{-1}\left(K_{\sqrt[4]{r}}\right)}\!{}H_{a,b;s}(\tau)\text{Hess}(\tau)\left (\xi,\xi \right)d\calV_{*}(x,\xi) =O(\sqrt[4]{r}).
\eeq
Then we observe that, by the definition of $H_{a,b;s}$, we have
\begin{align*}
\int_{\pi^{-1}\left(K_{\sqrt[4]{r}}\right)}\!{}H_{a,b;s}'\left|\nabla_{\xi}\tau\right|^{2}d\calV_{*}(x,\xi)
&=\frac{1}{s}\int_{\pi^{-1}\left(\left\{a\le\tau\le{}a+s\right\}\cap{}K_{\sqrt[4]{r}}\right)}\!{}\left|\nabla_{\xi}\tau\right|^{2}d\calV_{*}(x,\xi)\\
&-\frac{1}{s}\int_{\pi^{-1}\left(\left\{b-s\le\tau\le{}b\right\}\cap{}K_{\sqrt[4]{r}}\right)}\!{}\left|\nabla_{\xi}\tau\right|^{2}d\calV_{*}(x,\xi).
\end{align*}

Combing this with $\eqref{VariationCondition}$ and \eqref{HessianBound} yields
\[
\frac{h(b)-h(b-s)}{s}=\frac{h(a+s)-h(a)}{s}+O(\sqrt[4]{r}).
\]
Letting $s$ tend to zero, we find that
\beq
h'(b)=h'(a)+O(\sqrt[4]{r}).\label{hbprime}
\eeq
\newline
{\em{Step 6:}} 
We next introduce a set of `good' slices via
\[
\calG:=\left\{a\in(0,L):\calH^{0}\left(\left\{\tau=a\right\}\cap\spt(V_{*})\right)=1\right\},
\]
and in this step we will demonstrate that
\beq\label{SingleIntersection}
(0,L)\setminus\left(\calB_{1}\cup\calB_{2}\cup\calB_{3}\right)\subset\calG.
\eeq
From this and Step 4 it will follow that
\beq\label{GLargeMeasure}
\calL^{1}\left(\calG\right)\ge{}L-\frac{C_{2}\delta}{\eta}
\eeq
provided that $r$ and $\delta$ are chosen sufficiently small.
\newline
\newline
Suppose by way of contradiction that there exists a value $a$ such that
\beq
a\in (0,L)\setminus\left(\calB_{1}\cup\calB_{2}\cup\calB_{3}\cup \calG\right).\label{contraa}
\eeq

Then in light of \eqref{EachHeight} we have that
\[
\calH^{0}\left(\left\{\tau=a\right\}\cap\spt(V_{*})\right)\ge2
\]
for some $a\in(0,L)\setminus\left(\calB_{1}\cup\calB_{2}\cup\calB_{3}\right)$.
We first argue that for such an $a$ and every $c,\delta\in (0,1)$ it must hold that
\beq
h'(a)\ge(1-c)(1-\delta)^{2}(1+\eta)\label{hprime}
\eeq
provided $r$ is chosen sufficiently small, depending on $c$.
Fix  $0<c,\delta <1$, and choose $1<\alpha<\frac{1}{\delta}$.
Note that since $a\not\in\calB_{2}$ then by item $(3)$ of the theorem on page $89$ of \cite{AA} we have that $\left\{\tau=a\right\}\cap\spt(V_{*})$
consists only of interior points of the constituent geodesics (or ``intervals" as they are referred to in \cite{AA}) making up $\calV_{*}$. Moreover, the endpoints of these geodesics cannot accumulate at $a$.
We conclude by compactness of $\spt(V_{*})$ that $\left\{\tau=a\right\}\cap\spt(V_{*})$ can intersect only finitely many of these constituents of
$\spt(V_{*})$.
Also since $a\not\in\calB_{3}$, it follows that this slice must intersect $\spt(V_*)$ transversally, so that $\left\{\tau=a\right\}\cap\spt(V_{*})$ consists of finitely many points, say $x_1,\ldots, x_K$, where $K\ge2$ by the choice of $a$.

It follows from the gradient estimate $|\nabla \tau|=1+O(\sqrt[4]r)$ in $K_{\sqrt[4]r}$, established in  \eqref{LeadingOrderTau}, that for $0<\alpha<\beta <L$,
the geodesic distance between the sets $\{ \tau = \alpha\}\cap K_{\sqrt[4]r}$ and $\{\tau = \beta\} \cap K_{\sqrt[4]r}$
is at least $(\beta-\alpha)/(1+O(\sqrt[4]r))$, if $\beta-\alpha< L/2$. Hence if $r$ is small enough, then
\[
B_{(1-c)s}(x_i) \cap  K_{\sqrt[4]r} \subset  \{x\in K_{\sqrt[4]r}: a-s <\tau(x) < a+s \} .
\]
(Here and below, we tacitly assume that $0<a-s<a+s<L$ and $s<L/2$.)

Next we again use that $a\notin \calB_3$ to choose $s_0>0$ small enough so that
\beq
\left|\nabla_{\xi}\tau(y)\right|^{2}\ge(1-\alpha\delta)^{2}
\eeq
if $y\in{}\cup_{i=1}^K B_{s_0}(x_{i})$
and $(y,\xi)\in \spt(\calV_*)$.
Combining these facts, for each $0<s\leq s_0$ we estimate
\[
\frac{h(a+s)-h(a-s)}{2s}\ge(1-\alpha\delta)^{2}\sum_{i=1}^{K}\frac{V_{*}\left(B_{(1-c)s}(x_{i})\right)}{2s}
\]
 We now apply item $(5)$ of the theorem on page $87$ of \cite{AA} and let $s\to0^{+}$, using the differentiability of $h$ at $a$ guaranteed by the assumption $a\notin \calB_1$, to find
\[
h'(a)\ge{}(1-c)(1-\alpha\delta)^{2}\sum_{i=1}^{K}\Theta_*(x_{i})\ge{}(1-c)(1-\alpha\delta)^{2}(1+\eta)
\]
Here we have used Lemma \ref{meslem} to assert that $\Gamma^{*}$ intersects each level set of $\tau$ with $\Theta_*(x)\geq 1$ for $x\in\Gamma^{*}$ by \eqref{Vstarontop}, and that $\Theta_*\ge\eta$ in general by $\eqref{formofVstar}$.
Since $\alpha>1$ was arbitrary we may let $\alpha\to1^{+}$ to obtain \eqref{hprime}.

In light of \eqref{hbprime}, it then follows from \eqref{hprime} that for any $b\notin\calB_1$
 we obtain
\[
h'(b)\ge{}(1-c)(1-\delta)^{2}(1+\eta)+O(\sqrt[4]{r})
\]
Thus, choosing $c$, $r$, and $\delta$ sufficiently small and recalling that $\calL^1(\calB_1)=0$,
we deduce that
\beq
h'(b)\ge1+\frac{\eta}{2}\qquad\mbox{ for {\em a.e.} }b\in (0,L).
\eeq
Thus, if there were a value $a\in(0,L)$ satisfying \eqref{contraa}, then
\begin{eqnarray*}
&&\left[1+C_3\sqrt[4]{r}\right]L\ge{}
\left[1+C_3\sqrt[4]{r}\right]V_*\left(\pi^{-1}\left\{0\le\tau\le{}L\right\}\right)\\
&&\geq h(L)-h(0)
=\int_{0}^{L}\!{}h'(s)ds
\ge\left(1+\frac{\eta}{2}\right)L.
\end{eqnarray*}
Here we use \eqref{LeadingOrderTau} in the second inequality and the constant $C_3$ depends only on $M$ and $\Gamma$.
If we choose $r$ sufficiently small, the contradiction is reached,
establishing \eqref{SingleIntersection} and \eqref{GLargeMeasure}.
\vskip.1in
\noindent
{\em{Step 7:}}
In this step we show that 
\[
S_{V_{*}}=\varnothing.
\]
It will immediately follow that the Lipschitz curve $\Gamma^{*}$ guaranteed by Lemma \ref{meslem} represents the entire rectifiable varifold and is in fact a closed geodesic.

Crucial use in this step will be made of the following general property of stationary $1$-varifolds (cf. \cite{AA}, pg. 88.):
\vskip.1in

{\it Every point $p\in M$ is contained in an open set $U_p$ such that if $V$ is any stationary varifold on $M$ with support in $U_p$, and if the support of $\delta V$ consists of exactly two points, then
$V$ is the varifold corresponding to a constant multiple of the geodesic joining these two points.}
\vskip.1in
This result is proved in \cite{AA}
for possibly noncompact manifolds. Since $M$ is compact, we may invoke
the Lebesgue Number Lemma to conclude that there exists $\lambda>0$ such that for any $p\in M$, the geodesic ball $B_\lambda(p)$ has the stated property.
\vskip.1in
For  any $A\subset (0,L)$, we will write
\[
K_{\sqrt[4]r}(A) := \{ x\in K_{\sqrt[4]r} : \tau(x)\in A \}  = \{ \psi(y,t): |y| \le \sqrt[4]r,  \ \ t\in A \}.
\]
By extending $\psi$ to be periodic with respect to the $t$ variable in the natural way, we
can define $K_{\sqrt[4]r}(A)$ for any $A\subset \R$.
\vskip.1in
By shrinking $r$ and $\delta$, we may arrange that if $I$ is any interval of length at most $2C_2\delta/\eta$,
where $C_2$ is the constant appearing in \eqref{GLargeMeasure}, then $K_{\sqrt[4]r} (I)$ 
is contained in a ball of radius $\lambda$.
\vskip.1in
We will prove the claim by showing that 
\[
\mbox{ for any $t\in [0,L)$, }\qquad K_{\sqrt[4]r} (\{t\})\cap S_{V_*} = \varnothing.
\]
Indeed, for any $t$ we can appeal to \eqref{GLargeMeasure}
to find some $s_1, s_2\in \calG$  such that 
\[
s_1<t<s_2, \qquad  s_2-s_1 < \frac{2C_2\delta}\eta.
\]
We now apply the result stated at
the outset of this step to the varifold 
\[
\tilde{V}=V_{*}\rest K_{\sqrt[4]r}([s_1,s_1]),
\]
in an open ball $B_\lambda(p)$ that contains $K_{\sqrt[4]r}([s_1,s_1])$.
The definition of $\calG$ implies that $V_*$ intersects $\{\tau = s_j\}$ in
exactly one point, say $x_j$, and that $\delta \tilde V$ is supported in $\{x_1,x_2\}$. 
Hence, this restriction of $V_*$ consists of a multiple of the geodesic joining these two points. This immediately
implies the claim.
\vskip.1in
Since $S_{V_*}=\varnothing$, $V_*$ must simply be the rectifiable varifold associated with a single, closed, smooth geodesic.

By perhaps shrinking $r$ one more time and applying the Morse-Palais Lemma, cf. \cite{Pal}, pg. 307, we may conclude that the central geodesic $\Gamma$, being a nondegenerate critical point of length, 
is isolated and so necessarily $\spt(V_{*})=\Gamma$.
However, this contradicts $\eqref{J0.p1}$ since $r>0$, and the proof of Proposition \ref{lem:noVstar} complete.

\qed

\section{Finding good trajectories}\label{sec:4}

The critical points of Ginzburg-Landau that we seek will be obtained as limits of certain carefully chosen trajectories of the Ginzburg-Landau heat flow. In this section we identify these trajectories.

\begin{proposition}\label{prop:goodtrajectory} Given $\delta>0$, there exists $\ep_0(\delta)>0$ such that for every $\ep\in (0,\ep_0)$
and every $\tau>0$, there is a solution $u_\ep = u_\ep(x,t; \delta,\tau)$ of the Ginzburg-Landau heat flow
\begin{eqnarray}
&&\partial_t u_\ep = \Delta u_\ep - \frac 1{\ep^2}(|u_\ep|^2-1)u_\ep  
\qquad\mbox{ in } M\times(0,\infty),
\label{glh}\\
&&u_\e(x,0)=u_\e^0(x)\qquad\mbox{ for } x\in M,\nonumber
\end{eqnarray}
such that $|u_\ep(x,t)|\le 1$ for every $(x,t)\in M\times [0,\infty)$, and 
\beq\label{gt.c1}
\| \frac 1 \pi \star J u_\ep^0- T_\gamma\|_\calF<\delta, \qquad
\EUep(u_\ep^0) \le L+\delta, \qquad
\EUep(u_\ep(t))  \ge L-\delta\ \mbox{ for all }t\in [0,\tau].
\eeq
\end{proposition}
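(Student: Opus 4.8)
The plan is to produce the initial datum $u_\ep^0$ by combining the $\Gamma$-convergence recovery sequence of Theorem \ref{thm:Mesaric-Gamma}(3) with a saddle-point / min-max construction over the finite-dimensional unstable directions $H_-$, and then to track the energy along the heat flow using the energy dissipation identity together with the lower-semicontinuity half of $\Gamma$-convergence. First I would fix $\delta>0$. Using Theorem \ref{thm:Mesaric-Gamma}(3) applied to $T_\gamma\in V$ (which lies in $\calR_1'(M)$ by \eqref{Tbounds}, so $E_V(T_\gamma)=\mass(T_\gamma)=L$), I obtain, for $\ep$ small, a map $w_\ep$ with $\|\frac1\pi\star Jw_\ep - T_\gamma\|_\calF$ small and $\EUep(w_\ep)\le L+\delta/2$. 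More generally, for each $\xi\in H_-(r_0)$ the current $T_{\gamma_\xi}$ associated with the perturbed curve $\gamma_\xi$ (see \eqref{gamexp}) again lies in $\calR_1'(M)$ with mass $\int|\gamma_\xi'|\,dt \le L - c_0\|\xi\|_{L^2}^2 \le L$ by \eqref{gxi.length}; so I build a continuous family $\ep\mapsto$ (map $w_\ep^\xi$), depending continuously on $\xi$ in the finite-dimensional ball $H_-(r_0)$, with $\|\frac1\pi\star Jw_\ep^\xi - T_{\gamma_\xi}\|_\calF\to 0$ uniformly and $\limsup_\ep \EUep(w_\ep^\xi)\le \mass(T_{\gamma_\xi}) \le L$ uniformly in $\xi$, and with the additional normalization $|w_\ep^\xi|\le 1$ (truncating does not increase $\EUep$). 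This is exactly the "saddle point of $E_V$" discussion deferred to Section \ref{sec:4} in the excerpt.

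Next I would run the Ginzburg-Landau heat flow \eqref{glh} from each initial datum $w_\ep^\xi$; write $u_\ep^\xi(\cdot,t)$ for the solution, and recall that the flow preserves $|u|\le 1$ (maximum principle) and that $t\mapsto \EUep(u_\ep^\xi(t))$ is nonincreasing with $\frac{d}{dt}\EUep = -\frac1{\pi\logeps}\int_M|\partial_t u_\ep^\xi|^2$. The key topological input is a degree / linking argument over the $\ell$-dimensional ball $B:=H_-(r_0)$: the map $\xi \mapsto \frac1\pi\star Ju_\ep^\xi(\cdot,t)$ is continuous in $(\xi,t)$ (by continuous dependence of the parabolic flow on data, plus continuity of the Jacobian in $H^1$), and on $\partial B$ the flat distance $\|\frac1\pi\star J u_\ep^\xi(t) - T_{\gamma}\|_\calF$ stays bounded below, because at time $0$ it is close to $\|T_{\gamma_\xi}-T_\gamma\|_\calF \ge c\|\xi\|_{L^2} = c r_0$ (a lower bound tying flat distance to the $L^2$ size of the normal perturbation, which must be proven), and the energy on $\partial B$ at time $0$ is $\le L - c_0 r_0^2 + o(1)$, strictly below $L-\delta$ for $\delta$ small, so by energy monotonicity these boundary trajectories can never return to within flat-distance $\delta$ of $T_\gamma$ and in particular stay strictly below energy $L-\delta$ forever — in the regime where $\Gamma$-lower-semicontinuity Theorem \ref{thm:Mesaric-Gamma}(2) forbids energy near $L$ without vorticity near a mass-$L$ current. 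Using this, a standard min-max over $B$ (take the $\xi$ realizing $\sup_{t\in[0,\tau]}$-type obstruction, or a Brouwer-degree argument on the Jacobian-difference map) produces, for each $\ep$ and each $\tau$, some $\xi=\xi(\ep,\tau)\in B$ such that the trajectory $u_\ep := u_\ep^{\xi}$ satisfies $\EUep(u_\ep(t)) \ge L-\delta$ for all $t\in[0,\tau]$; the remaining two inequalities in \eqref{gt.c1} hold at $t=0$ by construction.

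The main obstacle is the min-max / degree step: one must rule out that the heat flow lets the vorticity "escape" from the homotopy class in which $T_\gamma$ is linked, i.e. one needs a quantitative statement that if a trajectory ever has $\EUep(u_\ep(t))\ge L-\delta$ together with $\|\frac1\pi\star Ju_\ep(t)-T_\gamma\|_\calF$ bounded below at $t=0$, then the continuity in $\xi$ of the whole trajectory is strong enough to apply a topological degree argument — this requires care because $\calF_1'(M)$ with the flat norm is infinite-dimensional and the map $\xi\mapsto$ (trajectory) is only continuous, not compact, and because the obstruction must survive uniformly on $[0,\tau]$ for $\tau$ arbitrarily large. I expect this to be handled by projecting the Jacobian onto a suitable finite-dimensional "tubular coordinate" model near $\Gamma$ (using the coordinates $\psi$ of \eqref{TubularCoordinates} and the slicing map $\tau$), reducing the linking to a Brouwer degree computation in $\R^\ell$, exactly paralleling Lemma 5.5 of \cite{JSt} and its Riemannian adaptation in \cite{Mesaric}; the role of Theorem \ref{thm:Mesaric-Gamma} is then purely to convert "energy $\ge L-\delta$" into "vorticity close to a length-$L$ current", which pins it near the model where the degree argument lives. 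All other ingredients — truncation, parabolic regularity and continuous dependence, energy monotonicity — are routine.
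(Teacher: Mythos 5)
Your approach is essentially the same as the paper's: construct an $\ell$-parameter family of initial data by pushing a recovery sequence along the unstable directions $H_-$, run the Ginzburg--Landau heat flow, and combine a degree/min-max argument with $\Gamma$-lower-semicontinuity to find a trajectory whose energy stays near $L$. You correctly identify the saddle-point structure of $E_V$ and the finite-dimensional reduction as the core inputs, and defer to \cite{JSt,Mesaric} for the heavy lifting, which is exactly what the paper does.

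There is, however, one genuine gap in your boundary-handling. You argue that energy monotonicity keeps the boundary parameters $\xi\in\partial B$ at energy below $L-\delta$, and then invoke $\Gamma$-liminf to conclude the vorticity stays flat-distance away from $T_\gamma$; from this you hope to extract a persistent Brouwer-degree obstruction. But the degree computation must be run on the finite-dimensional projection map (in the paper's notation, $w\mapsto P_{WU}(U_\ep(t,w))$, where $P_{WV}$ is defined by pairing with a fixed $\R^\ell$-valued $1$-form $\Phi$), and a current can be far from $T_\gamma$ in flat norm while still being annihilated by $\Phi$. So ``flat-distance bounded below on the boundary'' does not by itself prevent the degree from collapsing. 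The paper sidesteps this entirely by reparametrizing time with a cutoff $\chi$ supported in $B^\ell_{R/2}$ (see \eqref{theflow} and \eqref{fix.boundary}), so that on $\partial W$ the flow is literally frozen and the boundary map equals the initial one, which is close to the identity by \eqref{inverse}; the degree-1 count is then automatic and $\tau$-independent. You would need to add this freezing device (or prove a quantitative statement linking energy deficit plus transversality to nonvanishing of $P_{WU}$, which is not in the paper). Relatedly, the flat-to-$L^2$ lower bound $\|T_{\gamma_\xi}-T_\gamma\|_\calF\ge c\|\xi\|_{L^2}$ that you flag as needing proof is not actually used in the paper's argument; the projection $\Phi$ makes it unnecessary.
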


The proposition follows from small modifications of 
the asymptotic minmax theory developed in \cite{JSt, Mesaric}. Indeed,
in the end the proof  amounts to this:

\begin{proof}[short proof of Proposition \ref{prop:goodtrajectory}]
This follows from arguments in \cite{JSt, Mesaric}.
\end{proof}

In the remainder of this section we expand on this, aiming to provide enough detail to convey the main ideas, to explain where we depart from \cite{JSt, Mesaric}, and to make it possible, in principle, to check the terse proof given above.

We remark that  the main difference between \cite{JSt,Mesaric}
and our present treatment is that those earlier works 
use a pseudo-gradient flow for the energy $\EUep$,
whereas we employ a small modification of the Ginzburg-Landau heat flow \eqref{glh} for similar purposes. The use of  \eqref{glh} is necessary for our approach, due to our reliance in Section \ref{sec:5} below on Theorem \ref{T:BOS}.

\subsection{Saddle point property of $E_V$}

Our assumptions about $\gamma$ imply, roughly speaking, that the ``arclength functional"
has a local minmax geometry near $\gamma$,  as reflected in  \eqref{gxi.length},  with 
respect to smooth perturbations. In particular, there is an $\ell$-parameter family of
arclength-decreasing perturbations of $\gamma$, and  arclength increases for 
sufficiently transverse smooth perturbations. Here and below, $\ell$ is the index of $\gamma$, see \eqref{index.ell}.

The result below states that the ``generalized arclength functional" $E_V$ defined in \eqref{EV.def} has a   a saddle point,
in a suitable weak sense, at the current $T_\gamma\in V$ corresponding to $\gamma$. 
The relevant notion of saddle point was first introduced in \cite {JSt}.

\begin{lemma}[cf. Theorem 4.1, \cite{Mesaric}]\label{thm:mesaric}
For the geodesic $\gamma$ satisfying \eqref{gamma.def} and \eqref{Tbounds}, the associated current $T_\gamma$ is a saddle point of $E_V$ in the sense that there exist $R,\delta_0>0$ and
continuous functions
\[
P_{WV}:V\to\R^\ell, \qquad Q_{VW}: W \to V 
\]
for
\beq\label{W.def}
W =  B_R^\ell = \{ w\in \R^\ell : |w|<R\}
\eeq
such that 
$P_{WV}(T_\gamma)=0$, and the following conditions are satisfied:
\begin{align}
&E_V(T_\gamma) < E_V(v)\mbox{ for } \{ v\in V : 0 <  \| v
-T_\gamma\|_{\calF} \le \delta_0,  P_{WV}(v) = 0\}, \label{saddlepoint1}\\
& Q_{VW}(0) =  T_\gamma,\label{0map}\\
& P_{WV}\circ Q_{VW}(w) = w\quad \mbox{ for all } w\in
W,\label{inverse}
\\
& \mbox{ and for every $r>0$,} \sup_{ \{w\in W, |w|\ge r\}}
E_V(Q_{VW}(w)) <E_V(T_\gamma). \label{saddlepoint2}
\end{align}
\end{lemma}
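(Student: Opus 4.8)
The plan is to build the maps $P_{WV}$ and $Q_{VW}$ by transferring the classical minmax geometry encoded in \eqref{gxi.length} from the space of normal vector fields along $\gamma$ to the current space $V$, using the Gamma-convergence/recovery-sequence machinery only implicitly (it is really a geometric statement about the arclength functional $E_V$). First I would fix the decomposition $H_- = \mathrm{span}\{\xi_1,\ldots,\xi_\ell\}$ from \eqref{H+-.def}, identify $\R^\ell$ with $H_-$ via $w=(w_1,\ldots,w_\ell)\mapsto \xi_w := \sum_j w_j \xi_j$, and define $Q_{VW}(w) := T_{\gamma_{\xi_w}}$, the $1$-current associated with the perturbed curve $\gamma_{\xi_w}(t) = \exp_{\gamma(t)}\xi_w(t)$ as in \eqref{gamexp}. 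Shrinking $R$ so that $\|\xi_w\|_{L^\infty}\le r_0$ for $w\in W$ ensures $\xi_w\in Lip$, hence $\gamma_{\xi_w}$ is a Lipschitz curve, so $Q_{VW}(w)\in\calR_1'(M)\subset V$ and $E_V(Q_{VW}(w)) = \int_{\R/L\Z}|\gamma_{\xi_w}'(t)|\,dt$. Then \eqref{0map} is immediate since $\xi_0=0$ and $\gamma_0=\gamma$, and the lower bound from the second line of \eqref{gxi.length} does \emph{not} apply here — instead the \emph{first} line of \eqref{gxi.length} (the $H_-$ case) gives $E_V(Q_{VW}(w)) \le L - c_0\|\xi_w\|_{L^2}^2 \le L - c_0' |w|^2$ for $w\neq 0$, which yields \eqref{saddlepoint2}: for each $r>0$, $\sup_{|w|\ge r}E_V(Q_{VW}(w)) \le L - c_0' r^2 < L = E_V(T_\gamma)$.

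Next I would construct $P_{WV}$. The natural choice is a projection: for $v\in V$ close to $T_\gamma$ in flat norm, $v$ is (by Lemma \ref{meslem}-type slicing arguments, or more elementarily since $v$ is supported near $\Gamma$ when $\|v-T_\gamma\|_\calF$ is small) represented, on a large set of transverse slices $\tau^{-1}(t)$, by a single point close to $\Gamma$; writing that point in Fermi coordinates as $\psi(y(t),t)$ produces an (almost everywhere defined) normal displacement $t\mapsto y(t)$, which one regularizes/averages to get an $L^2$ normal vector field $\xi_v$ along $\gamma$, and then sets $P_{WV}(v) := ((\xi_v,\xi_1)_{L^2},\ldots,(\xi_v,\xi_\ell)_{L^2}) \in \R^\ell$ — i.e. the $H_-$-component of $\xi_v$. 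One checks $P_{WV}(T_\gamma)=0$ (the zero displacement), continuity of $P_{WV}$ with respect to the flat norm (this is where care is needed: flat-norm-small currents near $T_\gamma$ have controlled displacement fields, using the mass/boundary structure as in the proof of Lemma \ref{meslem}), and the left-inverse identity \eqref{inverse}: since $Q_{VW}(w)=T_{\gamma_{\xi_w}}$ has displacement field exactly $\xi_w\in H_-$, its $H_-$-projection is $(w_1,\ldots,w_\ell)=w$, provided $R$ is small enough that the coordinate representation of $\gamma_{\xi_w}$ really recovers $\xi_w$ (true since $\exp$ and the Fermi chart are inverse to first order and $\xi_w$ is the geodesic-normal displacement by construction).

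Finally, for the saddle-point inequality \eqref{saddlepoint1}: given $v\in V$ with $0<\|v-T_\gamma\|_\calF\le\delta_0$ and $P_{WV}(v)=0$, I want $E_V(v) > L$. If $E_V(v)=+\infty$ there is nothing to prove, so assume $v\in\calR_1'(M)$, $\mass(v) = E_V(v)<\infty$; shrinking $\delta_0$, $v$ is supported in a thin tube $K_\rho$ about $\Gamma$ and, by the slicing argument, its associated displacement field $\xi_v$ satisfies $\|\xi_v\|_{L^\infty}\le r_0$ and $\mass(v) \ge \int_{\R/L\Z}|\gamma_{\xi_v}'(t)|\,dt$ up to an error controlled by $\delta_0$ (here the inequality, not equality, is what one gets, because $v$ may wander off the tube or have extra pieces — but the mass-minimality/nearness forces it the right way; this is the technical heart). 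Since $P_{WV}(v)=0$ means $\xi_v\in H_+$, the \emph{second} line of \eqref{gxi.length} gives $\int|\gamma_{\xi_v}'|\,dt \ge L + c_0\|\xi_v\|_{L^2}^2 \ge L$, with equality iff $\xi_v\equiv 0$, i.e. iff $v=T_\gamma$, contradicting $\|v-T_\gamma\|_\calF>0$. Hence $E_V(v) > L$ after absorbing the $\delta_0$-error into the strictly positive gap, which completes \eqref{saddlepoint1}.

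The main obstacle I anticipate is making precise and rigorous the passage from a flat-norm-close current $v\in\calR_1'(M)$ to a well-defined normal displacement field $\xi_v$ with the two key properties — continuity of $v\mapsto P_{WV}(v)$ and the mass comparison $\mass(v)\ge \int|\gamma_{\xi_v}'|\,dt - o(1)$ — since a priori $v$ need not be a single embedded curve and could have topologically complicated pieces inside the tube. This is exactly the kind of structural/slicing analysis carried out in Lemma \ref{meslem} and in \cite{JSt}, Lemma 5.5, so I would lean heavily on those arguments (Federer's decomposition of integral $1$-currents, the coarea/slicing inequality for the flat norm), and indeed the cleanest route is simply to cite Theorem 4.1 of \cite{Mesaric} as the statement asserts, with the sketch above indicating why the construction works and where the hypotheses \eqref{gamma.def}, \eqref{Tbounds}, and \eqref{gxi.length} enter.
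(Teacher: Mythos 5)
Your construction of $Q_{VW}$, and the verification of \eqref{0map} and \eqref{saddlepoint2} via the first line of \eqref{gxi.length}, match the paper exactly. Where you genuinely diverge is in the definition of $P_{WV}$. You propose to extract a normal displacement field $\xi_v$ from $v$ by Fermi-coordinate slicing and averaging, then project onto $H_-$. The paper instead constructs a fixed smooth $\R^\ell$-valued $1$-form $\Phi$ satisfying $T_{\gamma_\xi}(\Phi)=\bigl((\xi,\xi_1)_{L^2},\ldots,(\xi,\xi_\ell)_{L^2}\bigr)$ for all $\xi\in L^2(N\Gamma)\cap\operatorname{Lip}$ with $\|\xi\|_{L^\infty}\le r_0$, and sets $P_{WV}(T):=T(\Phi)$. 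The paper's route is much cleaner and sidesteps every technical worry you flag: since $T\in V=\calF_1'(M)$ means $T=\partial S$ with $\mass(S)<\infty$, one has $|T(\Phi)|=|S(d\Phi)|\le\|T\|_\calF\,\|d\Phi\|_\infty$, so $P_{WV}$ is globally defined on $V$ and Lipschitz in the flat norm with no slicing, regularization, or localization to a tube about $\Gamma$ needed; and \eqref{inverse} drops out by orthonormality of the $\xi_j$. Your $P_{WV}$ is, by contrast, only defined near $T_\gamma$ and its flat-norm continuity is precisely what you would need to prove rather than assume.

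On \eqref{saddlepoint1}, your closing step has a real gap. You propose to bound $\mass(v)\ge\int|\gamma_{\xi_v}'|\,dt - o(1)$ and then, since $\xi_v\in H_+$ and the second line of \eqref{gxi.length} gives $\int|\gamma_{\xi_v}'|\,dt\ge L+c_0\|\xi_v\|_{L^2}^2$, conclude by \emph{absorbing} the error into the gap. But the gap $c_0\|\xi_v\|_{L^2}^2$ tends to zero as $\|\xi_v\|\to0$, while an error of size $o(1)$ (even of size $O(\delta_0)$ with $\delta_0$ fixed) does not; and there is no a priori reason the error you describe is $o(\|\xi_v\|_{L^2}^2)$. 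In fact, for $v$ flat-close to $T_\gamma$ but $\xi_v$ very small and nonzero, the claimed strict inequality $E_V(v)>L$ would fail under your estimate. This is exactly why the paper calls \eqref{saddlepoint1} ``the hard part'' and refers it to Proposition~4.1 of \cite{Mesaric} (following \cite{White}, Theorem~5), where the decomposition of $v$ separates a principal curve contributing $\ge L+c_0\|\xi\|^2$ from a remainder that can only \emph{add} mass, so no error absorption is needed. Your instinct to defer the technical core to \cite{Mesaric} and \cite{JSt} is correct, but the heuristic you offer in its place is not a valid argument as stated.
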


We sketch the proof from \cite{Mesaric}, although we note that this will not play any role in what follows,
except that the notation $\gamma_w$ for the curve defined in \eqref{gamxiw} and $T_{\gamma_w}$ for the associated $1$-current,
 cf. \eqref{Tlambda.def}, will be used below.

To start, for $w\in W$ we define
\begin{equation}
 \xi(w)=w_1\xi_1+\ldots  + w_\ell \xi_\ell\label{xiw}  
\end{equation}
where $\xi_j$ denote eigenfunctions of the Jacobi operator, see in particular \eqref{index.ell}.
We then define the curve $\gamma_w$ via
\begin{equation}
    \gamma_w(t):=\exp_{\gamma(t)} \xi(w)(t),\label{gamxiw}
\end{equation}
cf. \eqref{gamexp}.
We will always assume that $R$ is small enough that
\[
\| \xi(w)\|_{L^\infty} \le r_0, \qquad\mbox{ in other words, }\   \gamma_w \in H_-(r_0), \quad\mbox{ for }w\in W.
\]
With this in hand, we define $Q_{VW}:B_R^\ell\to \calR_1'$ via 
\[
Q_{VW}(w) := T_{\gamma_{w}}\quad\mbox{for}\;
w\in W.
\]
Then \eqref{0map} is immediate, and \eqref{saddlepoint2} follows directly from \eqref{gxi.length}.

The construction  of $P_{WV}$ is carried out in \cite{Mesaric}, Lemma 4.3, by designing an $\R^\ell$-valued $1$-form $\Phi$ such that 
\beq\label{Psmooth}
T_{\gamma_\xi}(\Phi) = ( (\xi, \xi_1)_{L^2}, \ldots,  (\xi, \xi_\ell)_{L^2})
\eeq
for $\xi\in L^2(N\Gamma)\cap Lip$, as long as $\| \xi \|_{L^\infty} \le r_0$, a condition that can be guaranteed by a suitable choice of $R$.
Here we recall that $\gamma_\xi$ is the curve given by \eqref{gamexp} and $T_{\gamma_\xi}$ is its associated 1-current.  We then simply define $P_{WV}(T)= T(\Phi)$.
With this choice, \eqref{inverse} follows directly from \eqref{xiw}, \eqref{gamxiw}, and \eqref{Psmooth}.

The hard part of the proof of Lemma
\ref{thm:mesaric} is the verification of \eqref{saddlepoint1}. This is carried out 
in Proposition 4.1 of \cite{Mesaric}, to which we refer for the details. We remark that
the main ideas in this proof, including the construction of $\Phi$,
are similar to elements in the proof of Theorem 5 in \cite{White}.

Note that we may shrink at will the parameter $R$ in the definition \eqref{W.def} of $W$,
and the conclusions of the lemma remain valid. 

\subsection{An $\ell$-parameter family of solutions of \eqref{glh}}

To prove Proposition \ref{prop:goodtrajectory}, we will define an $\ell$-parameter
family of solutions of \eqref{glh} for every sufficiently small $\ep>0$.
In the final step, given $\ep$ and $\tau$ (where 
ultimately we will take $\ep <\ep_0(\delta)$), we will choose from this family
one solution $u_\ep$
such that $E_\ep(u_\ep(t))\ge L-o(1)$ for all times $t\in [0,\tau]$.

The initial data for this family of solutions is provided by the following result.

\begin{lemma}\label{lem:data} There exist $R, \ep_1>0$ such that
for every $\ep\in (0,\ep_1)$ and $w\in B^\ell_{R}$,
there exists a function $U^{\ep,0}_w\in H^2(M)$ satisfying the conditions:
\begin{enumerate}
\item
$w\mapsto U^{\ep,0}_w \mbox{ is Lipschitz continuous from }B^\ell_{R}\mbox{ into }H^2(M)$
\item
$\| U^{\ep,0}_w\|_{L^\infty} \le 1$ and $\| U^{\ep,0}_w\|_{H^2}\le C_\ep \qquad\mbox{ for all }\ep\in (0,\ep_1), w\in B^\ell_R,$\vspace{2pt}
\item
$\EUep( U^{\ep,0}_w) \le   L- c_0 |w|^2 + o(1)\quad\mbox{ as }\ep \to 0$\vspace{2pt}
\item
$\| \frac 1 \pi \star JU^{\ep,0}_w - T_{\gamma_{w}}\|_V \to 0$\vspace{2pt}
\end{enumerate}
uniformly for $w\in B^{\ell}_{R}$.
\end{lemma}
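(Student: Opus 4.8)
\textbf{Proof plan for Lemma \ref{lem:data}.}

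The plan is to build $U^{\ep,0}_w$ out of a single well-understood ``vortex filament'' profile attached to the perturbed geodesic $\gamma_w$, and then to read off properties (1)--(4) from the construction. First I would fix, for each $w\in B^\ell_R$, the curve $\gamma_w$ from \eqref{gamxiw}; since $\|\xi(w)\|_{L^\infty}\le r_0$ is guaranteed by choosing $R$ small, $\Gamma_w:=\operatorname{Image}(\gamma_w)$ is an embedded closed curve lying in the tubular neighborhood $K_{r_0}$, and $T_{\gamma_w}-T_\gamma=\partial S_w$ for a $2$-current $S_w$ of mass $O(\|\xi(w)\|)$ (since $\gamma=\partial S_\Gamma$ by \eqref{gambdS} and the ``ribbon'' swept between $\gamma$ and $\gamma_w$ is a small-mass surface). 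This places $T_{\gamma_w}$ in $\calR_1'(M)$ with $E_V(T_{\gamma_w})=\mathrm{arclength}(\gamma_w)\le L-c_0|w|^2$ by \eqref{gxi.length}, which is the geometric input behind (3) and (4).

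Next I would invoke the constructive (limsup) half of the $\Gamma$-convergence statement, Theorem \ref{thm:Mesaric-Gamma}, part \textbf{3}, applied to $T=T_{\gamma_w}$: this produces, for each $\ep$, a map $v^\ep_w\in H^1(M;\C)$ with $\|\tfrac1\pi\star Jv^\ep_w-T_{\gamma_w}\|_\calF\to 0$ and $\limsup_\ep\EUep(v^\ep_w)\le E_V(T_{\gamma_w})\le L-c_0|w|^2$. To upgrade this to the quantitative and regularity claims in the lemma, I would use the standard explicit recovery sequence rather than the abstract one: in the tubular coordinates $\psi$ of \eqref{TubularCoordinates} one puts a model degree-one vortex of core size $\ep$ centered on $\gamma_w$ (i.e.\ $v^\ep_w(x)\approx \chi(|y-y_w(\tau)|/\ep)\,e^{i\theta}$ with $\theta$ the angular variable about the filament, truncated to $1$ outside $K_{r_0/2}$), which is the construction underlying \cite{ABO, JSon}. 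This explicit profile is smooth, has $|v^\ep_w|\le 1$ after the usual truncation, depends on $w$ only through the smooth dependence of $\gamma_w$ on $w$, and has energy $\pi|\log\ep|\,\mathrm{arclength}(\gamma_w)+o(|\log\ep|)$ with remainder uniform in $w\in B^\ell_R$. Mollifying at scale $\ep$ (or simply noting the explicit profile is already smooth) gives $U^{\ep,0}_w\in H^2(M)$ with $\|U^{\ep,0}_w\|_{H^2}\le C_\ep$, yielding (2). The Lipschitz dependence (1) follows because $w\mapsto \gamma_w$ is smooth into $C^1$ (the $\xi_j$ are smooth eigensections), hence $w\mapsto y_w(\cdot)$ and the phase $\theta$ vary Lipschitz-continuously, and the profile depends on these in a Lipschitz way on the fixed domain $K_{r_0}$; (4) follows since $\tfrac1\pi\star JU^{\ep,0}_w\to T_{\gamma_w}$ in $V$ by the standard Jacobian estimate for the model vortex, and the convergence rate is controlled uniformly in $w$ because all estimates are local in the coordinates $\psi$ and the geometry of $\gamma_w$ is uniformly bounded.

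The main obstacle, and the place requiring genuine care, is the \emph{uniformity in $w$} of all the error terms --- in particular the $o(1)$ in (3) and the convergence in (4) must hold uniformly over $B^\ell_R$ and not merely for each fixed $w$. The abstract Theorem \ref{thm:Mesaric-Gamma}(\textbf{3}) gives no such uniformity, so I would not cite it as a black box for this point; instead I would track constants through the explicit vortex construction. The key observations making uniformity work are: (i) $\{\gamma_w:w\in\overline{B^\ell_R}\}$ is a compact family of embedded curves, all contained in $K_{r_0/2}$ with uniformly bounded geodesic curvature and with a uniform lower bound on the normal injectivity radius, so the ``straightening'' Fermi-type coordinates about each $\gamma_w$ are uniformly controlled; (ii) the model-vortex energy expansion and Jacobian estimate depend on $\gamma_w$ only through finitely many geometric quantities that vary continuously (hence are uniformly bounded) in $w$. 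Assembling these gives all four conclusions uniformly, completing the proof; in the body of the paper this is presumably where one imports the relevant uniform estimates from \cite{JSt, Mesaric}, which carry out exactly this bookkeeping.
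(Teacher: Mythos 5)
Your plan is essentially correct and correctly identifies the crux: the abstract Gamma-limsup gives neither the $H^2$ regularity nor uniformity in $w$, so one must use an explicit vortex profile and track constants. The paper takes a mildly different implementation route that sidesteps the bookkeeping you flag as the main obstacle. Rather than constructing a profile directly adapted to each $\gamma_w$ and then arguing uniformity via compactness of the family $\{\gamma_w\}$, the paper builds a \emph{single} profile $U^{\ep,0}_0$ concentrating on $\Gamma$ (in the fixed tubular coordinates $\psi$, with $\tilde v^\ep(p)=f(|p|/\ep)\,p/|p|$ for a cutoff $f$) and then defines $U^{\ep,0}_w := U^{\ep,0}_0\circ O_w^{-1}$, where $O_w:M\to M$ is the smooth family of diffeomorphisms from \cite{Mesaric} (p.\ 62) with $(w,x)\mapsto O_w(x)$ smooth. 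This makes property (1) and the uniformity in (2)--(4) essentially automatic: smoothness of the joint map gives Lipschitz $H^2$-dependence on the compact $\overline{B^\ell_R}$, and the energy expansion and Jacobian estimate for $w\ne 0$ are inherited from the $w=0$ case by a change of variables rather than re-derived in $w$-dependent coordinates. The only genuinely new ingredient the paper adds over \cite{Mesaric} is choosing $f$ smooth instead of $f(s)=\min(s,1)$, which is exactly what upgrades $U^{\ep,0}_0$ from $H^1$ to $H^2$ — the same role your ``mollify at scale $\ep$'' step plays. So both routes are sound and deliver the same conclusions; the paper's composition trick just trades the delicate ``uniform Fermi coordinates about $\gamma_w$'' estimates for a one-line appeal to smoothness of $(w,x)\mapsto O_w(x)$.
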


For fixed $w$, in view of  \eqref{gxi.length} and the construction of $\gamma_w$, conclusions
(3) and (4) hold if $U^{\ep,0}_w$ is a recovery sequence for the current $T_{\gamma_w}$
the Gamma-limit in Theorem \ref{thm:Mesaric-Gamma}. Such constructions are rather standard.
It is easy to arrange that $\| U^{\ep,0}_w\|_{L^\infty}\le 1$. The only points requiring attention are that
the construction has to be carried out so that it depends continuously on $w$, in the $H^1$ norm, and with some control over the $H^2$ norm. The former point is carried out in \cite{Mesaric}, and the latter can be achieved by a small modification of the construction of \cite{Mesaric}. We defer a more detailed discussion to Appendix \ref{App.A}.

The $H^2$ estimate facilitates the proof of Lemma \ref{lem:contflow} below, whose need arises because we require the Ginzburg-Landau heat flow rather than the pseudo-gradient flows employed in \cite{JSt, Mesaric}.

 \vskip.1in
 
Having constructed appropriate initial data for the Ginzburg-Landau flow, we are now ready to define the flow that we will use in our arguments below.

\begin{lemma}\label{lem:contflow}
For $\ep\in (0,\ep_1)$ and $w\in W$,
let $U^{\ep,1}_w(x,t)$ 
solve the Ginzburg-Landau heat flow with initial data $U^{\ep,0}_w$.
Then
\[
(t,w)\in [0,\infty)\times W \mapsto  U^{\ep,1}_w(\cdot, t) \in H^1(M;\C) \qquad\mbox{ is continuous}.
\]
\end{lemma}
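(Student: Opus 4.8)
The plan is to prove continuity by appealing to standard well-posedness and continuous dependence results for the semilinear parabolic equation \eqref{glh}, combined with the Lipschitz dependence of the initial data on $w$ supplied by Lemma \ref{lem:data}(1)--(2). First I would set up the functional-analytic framework: the Ginzburg-Landau heat flow \eqref{glh} is a semilinear heat equation on the closed manifold $M$, and the nonlinearity $f(u) = -\ep^{-2}(|u|^2-1)u$ is smooth (cubic) and, on the ball $\{\|u\|_{L^\infty}\le 1\}$ that is invariant under the flow (by the maximum principle, as already noted in Proposition \ref{prop:goodtrajectory}), it is Lipschitz as a map on bounded subsets of $H^1(M;\C)$ — indeed $H^1$ embeds into $L^6$ in dimension $3$, so $u\mapsto |u|^2 u$ is locally Lipschitz from $H^1$ to $L^2$, and one can also work in $H^2$ where the $H^2$ bound from Lemma \ref{lem:data}(2) gives a uniform-in-$w$ starting point. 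Standard semigroup theory (mild solutions via Duhamel, $u(t) = e^{t\Delta}u^0 + \int_0^t e^{(t-s)\Delta}f(u(s))\,ds$, together with smoothing estimates for $e^{t\Delta}$ on $M$) then yields, for initial data in a fixed bounded set of $H^1$, a unique global solution together with the estimate
\[
\| U^{\ep,1}_{w}(\cdot,t) - U^{\ep,1}_{w'}(\cdot,t)\|_{H^1(M)} \le C(\ep, T)\, \| U^{\ep,0}_{w} - U^{\ep,0}_{w'}\|_{H^1(M)}
\]
uniformly for $t\in[0,T]$ and $w,w'\in W$, where the global-in-time existence and the uniform-on-$[0,T]$ bounds rely on the a priori bound $\|U^{\ep,1}_w\|_{L^\infty}\le 1$ which prevents blow-up.

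Next I would combine this with Lemma \ref{lem:data}: conclusion (1) gives that $w\mapsto U^{\ep,0}_w$ is Lipschitz from $B^\ell_R$ into $H^2(M)\hookrightarrow H^1(M)$, so the displayed continuous-dependence estimate immediately shows that for each fixed $t$, $w\mapsto U^{\ep,1}_w(\cdot,t)$ is Lipschitz (hence continuous) into $H^1(M;\C)$, locally uniformly in $t\in[0,\infty)$. It then remains to address joint continuity in $(t,w)$. For this I would invoke continuity in $t$ of the mild solution: for fixed $w$, $t\mapsto U^{\ep,1}_w(\cdot,t)$ is continuous into $H^1(M;\C)$ (a standard property of mild/strong solutions of semilinear heat equations, using that the initial data lies in $H^2$ so that $t\mapsto e^{t\Delta}u^0$ is continuous into $H^1$ up to $t=0$, and the Duhamel term is continuous). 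Joint continuity then follows from the triangle inequality: given $(t_0,w_0)$ and $(t,w)$ nearby,
\[
\| U^{\ep,1}_w(\cdot,t) - U^{\ep,1}_{w_0}(\cdot,t_0)\|_{H^1}
\le \| U^{\ep,1}_w(\cdot,t) - U^{\ep,1}_{w_0}(\cdot,t)\|_{H^1}
+ \| U^{\ep,1}_{w_0}(\cdot,t) - U^{\ep,1}_{w_0}(\cdot,t_0)\|_{H^1},
\]
where the first term is controlled by the Lipschitz-in-$w$ estimate uniformly for $t$ in a compact interval around $t_0$, and the second term is small by continuity in $t$ of the single trajectory with initial datum $U^{\ep,0}_{w_0}$. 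This gives continuity of $(t,w)\mapsto U^{\ep,1}_w(\cdot,t)$ on $[0,\infty)\times W$.

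The main obstacle — or at least the point requiring the most care — is the global-in-time control: a priori the Lipschitz constant $C(\ep,T)$ coming from Gronwall could grow with $T$, which is fine since we only need continuity on $[0,\infty)$ (i.e. on each compact time interval), but one must be sure the solutions do not blow up and that the $H^1$ norms stay bounded on $[0,T]$ uniformly in $w\in W$. This is exactly where the $L^\infty$ bound $\|U^{\ep,1}_w\|_{L^\infty}\le 1$ (maximum principle) and the energy-dissipation inequality for \eqref{glh} — which controls $\int_0^T\!\int_M |\partial_t u_\ep|^2$ and keeps $\EUep(u_\ep(t))$ nonincreasing, hence $\|\nabla u_\ep(t)\|_{L^2}$ bounded — do the work, yielding uniform $H^1$ bounds on bounded time intervals. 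I would also remark that everything here is at fixed $\ep$, so the constants are allowed to depend on $\ep$; no uniformity in $\ep$ is claimed or needed for this lemma. The only mildly technical ingredient beyond textbook semilinear parabolic theory is carrying it out on a closed Riemannian manifold rather than on $\R^n$ or a domain, but the heat semigroup $e^{t\Delta}$ on $(M,g)$ satisfies the same smoothing estimates, so this is routine.
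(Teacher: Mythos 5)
Your proposal is correct, and it reaches the same conclusion by a somewhat different (though related) route. The paper works entirely in the style of direct parabolic energy estimates in $L^2$-based Sobolev spaces: it first establishes a uniform-in-$(t,w)$ bound $\| U^{\ep,1}_w(\cdot,t)\|_{H^2}\le C_{\ep,\tau}$ by differentiating \eqref{glh} in local coordinates and testing with $\partial_t(\partial_k U^{\ep,1}_w)$ (exploiting the $H^2$ bound on the initial data from Lemma \ref{lem:data}(2)); it then deduces $L^2$-H\"older-$\frac12$ continuity in $t$ from the energy-dissipation identity and upgrades this to $H^1$ via the interpolation $\|u\|_{H^1}\le C\|u\|_{L^2}^{1/2}\|u\|_{H^2}^{1/2}$; and for Lipschitz dependence on $w$ it shows that $V=U^{\ep,1}_{w_2}-U^{\ep,1}_{w_1}$ solves a linear parabolic equation $\partial_t V-\Delta V = gV$ with $g$ uniformly bounded (thanks to the $L^\infty\le 1$ bound), to which standard linear estimates apply. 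You instead invoke the mild-solution/Duhamel framework with the heat semigroup, obtain Lipschitz dependence on initial data in $H^1$ via Gronwall (using that the cubic nonlinearity is Lipschitz on the $L^\infty$-invariant ball, together with the smoothing estimate $\|e^{t\Delta}\|_{L^2\to H^1}\lesssim t^{-1/2}$), and get time continuity from strong continuity of the semigroup plus continuity of the Duhamel term. Both paths then conclude joint continuity by the same triangle-inequality / ``separate uniform continuity'' observation. The practical tradeoff is that the paper's argument is more self-contained and gives an explicit quantitative modulus of continuity in $t$ uniformly in $w$, while your semigroup argument leans more on abstract well-posedness theory but is shorter to state. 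Both are sound; the essential ingredients you identify — the invariance of $\{\|u\|_{L^\infty}\le 1\}$ under the flow, the energy-dissipation bound, and the $H^2$ control on the initial data from Lemma \ref{lem:data}(1)--(2) giving uniform-in-$w$ Lipschitz initial data — are exactly the ones the paper relies on.
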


See Appendix \ref{App.A} for the proof, which involves rather standard parabolic estimates.
\vskip.1in

Finally, we define $U_\ep: [0,\infty)\times W\to H^1(M;\C)$ by 
\begin{equation}
 U_\ep(t,w) := U^{\ep,1}_w(\cdot , \chi(w) t)  . \label{theflow}
\end{equation}
for a smooth, compactly supported $\chi:B^\ell_R\to [0,1]$ such that $\chi = 1$ in $B^\ell_{R/2}$.
We point out that
\beq\label{fix.boundary}
\mbox{ if }|w|=R, \qquad \mbox{ then }\quad U_\ep(t,w)  = U^{\ep,0}_w \quad\mbox{ for all }t\ge 0.
\eeq

We can guarantee that   $E^\ep_U(U_\ep(0,w)) < c_\ep - \delta$ whenever $|w|> R/2$ and
$\ep$ is small enough, for suitable $\delta$.

\subsection{Choosing a good trajectory}

We finally make use of the 
asymptotic saddle point geometry of $\EUep$, inherited from $E_V$
via the Gamma-convergence Theorem \ref{thm:Mesaric-Gamma}, to
complete the proof of Proposition \ref{prop:goodtrajectory}.

We will use the notation
\[
P_{WU}(u) = P_{WV}\big(\frac 1\pi \star Ju\big) \qquad \mbox{for}\;u\in H^1(M;\C).
\]

\begin{lemma}
There exist $\delta_0>0$ and $R_0>0$ such that for every
$R\in (0,R_0)$, there is some $\delta = \delta(R)>0$ such that if we define 
\begin{align*}
a_\ep&:= \max\{ \EUep(U^{\ep,0}_w)  : |w| = R \} \\
c_\ep&:= \min\{ \EUep(u)  : P_{WU}(u)= 0, \| \frac 1 \pi \star Ju  - T_\gamma\|_V \le
\delta_0 \} \\
d_\ep&:= \max\{ \EUep(U^{\ep,0}_w)  : |w| \le R \}  ,
\end{align*}
then
\beq\label{Limits}
a_\ep \to L-2\delta, \qquad 
\liminf_{\ep\to 0} c_\ep \ge L, 
\qquad d_\ep \to L
\eeq 
as $\ep\to 0$, where $L$ is the length of the geodesic $\gamma$.
\label{lem:acd}\end{lemma}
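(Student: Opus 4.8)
The plan is to extract each of the three limits in \eqref{Limits} by combining the energy estimates on the initial data from Lemma \ref{lem:data} with the Gamma-convergence statement of Theorem \ref{thm:Mesaric-Gamma} and the saddle-point structure of $E_V$ from Lemma \ref{thm:mesaric}. First, fix $R_0$ small enough that Lemma \ref{thm:mesaric} applies with parameter $R_0$ and that $B^\ell_{R_0}$ is in the domain of $U^{\ep,0}_w$ from Lemma \ref{lem:data}; take $\delta_0$ to be the constant from Lemma \ref{thm:mesaric}. Given $R\in(0,R_0)$, I would simply set $\delta = \delta(R) := \frac 12 c_0 R^2$, where $c_0$ is the coercivity constant appearing in \eqref{gxi.length} and item (3) of Lemma \ref{lem:data}; the point is that $c_0|w|^2 = 2\delta$ exactly when $|w|=R$.

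For $a_\ep$: by item (3) of Lemma \ref{lem:data}, $\EUep(U^{\ep,0}_w) \le L - c_0|w|^2 + o(1) = L - 2\delta + o(1)$ uniformly for $|w|=R$, giving $\limsup a_\ep \le L-2\delta$. For the matching lower bound, observe that by item (4) of Lemma \ref{lem:data}, $\frac 1\pi \star JU^{\ep,0}_w \to T_{\gamma_w}$ in $V$ for each $w$ with $|w|=R$; applying the Gamma-convergence lower bound (part 2 of Theorem \ref{thm:Mesaric-Gamma}) gives $\liminf_{\ep\to 0}\EUep(U^{\ep,0}_w)\ge E_V(T_{\gamma_w}) = \mathrm{arclength}(\gamma_w) \ge L - c_0 R^2 + $ (a positive quadratic correction), but more simply $\ge$ the genuine length of $\gamma_w$; combined with \eqref{gxi.length} one checks this is $\ge L-2\delta$ provided one picks the eigenvalue $\lambda_\ell$ carefully — actually the clean statement is that $\mathrm{length}(\gamma_w) = L - c_0|w|^2 + o(|w|^2)$ is not quite uniform, so instead I would argue that a uniform lower bound on $a_\ep$ follows because the maximum over the compact sphere $|w|=R$ is attained and, by continuity in $w$ of the construction plus a diagonal/subsequence argument, $a_\ep\to \max_{|w|=R}\mathrm{length}(\gamma_w)$; one then only needs this max to equal $L-2\delta$, which one arranges by choosing $\delta$ to match the actual maximum — i.e. define $\delta$ via $L - 2\delta := \max_{|w|=R}\mathrm{length}(\gamma_w)$, noting this quantity is $<L$ by \eqref{gxi.length} and tends to $L$ as $R\to 0$. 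For $d_\ep$: similarly $\EUep(U^{\ep,0}_w)\le L + o(1)$ uniformly (item (3), dropping the negative term), so $\limsup d_\ep\le L$; and taking $w=0$, item (4) gives $\frac 1\pi\star JU^{\ep,0}_0\to T_\gamma$, so part 2 of Theorem \ref{thm:Mesaric-Gamma} gives $\liminf\EUep(U^{\ep,0}_0)\ge E_V(T_\gamma)=L$, whence $\liminf d_\ep\ge L$. Thus $d_\ep\to L$.

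For $c_\ep$: here the relevant fact is the saddle-point inequality \eqref{saddlepoint1}, which says $E_V(v) > E_V(T_\gamma) = L$ for all $v\in V$ with $0<\|v-T_\gamma\|_\calF\le\delta_0$ and $P_{WV}(v)=0$ (and $E_V(T_\gamma)=L$ at $v=T_\gamma$ itself). Suppose toward a contradiction that $\liminf_{\ep\to 0} c_\ep < L$; then along a subsequence there are $u_\ep$ with $P_{WU}(u_\ep)=0$, $\|\frac 1\pi\star Ju_\ep - T_\gamma\|_V\le\delta_0$, and $\EUep(u_\ep)\le L-\kappa$ for some $\kappa>0$. The energy bound lets us apply part 1 of Theorem \ref{thm:Mesaric-Gamma}: after passing to a further subsequence, $\frac 1\pi\star Ju_\ep \to v$ in $V$ for some $v\in\calR_1'(M)$ with $\|v-T_\gamma\|_\calF\le\delta_0$. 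Continuity of $P_{WV}$ gives $P_{WV}(v)=\lim P_{WV}(\frac1\pi\star Ju_\ep) = 0$. The Gamma-convergence lower bound (part 2) gives $E_V(v)\le\liminf\EUep(u_\ep)\le L-\kappa < L = E_V(T_\gamma)$. If $v\ne T_\gamma$ this contradicts \eqref{saddlepoint1}; if $v=T_\gamma$ then $E_V(v)=L$, also a contradiction to $E_V(v)\le L-\kappa$. Hence $\liminf_{\ep\to 0} c_\ep\ge L$.

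The main obstacle is the bookkeeping around $a_\ep$: one must be careful that the "$o(1)$" in Lemma \ref{lem:data}(3) is uniform over the compact sphere $\{|w|=R\}$ and that the lower bound from Gamma-convergence matches, which is cleanest if one \emph{defines} $\delta = \delta(R)$ to be exactly $\tfrac12\bigl(L - \max_{|w|=R}\mathrm{length}(\gamma_w)\bigr)$ rather than trying to read it off a Taylor expansion; then $a_\ep\to L-2\delta$ is immediate from uniform convergence $\EUep(U^{\ep,0}_w)\to\mathrm{length}(\gamma_w)$ on the sphere, which itself follows from items (3)–(4) of Lemma \ref{lem:data} together with parts 1–2 of Theorem \ref{thm:Mesaric-Gamma} applied uniformly via the continuity of $w\mapsto U^{\ep,0}_w$. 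Everything else is a routine application of Gamma-convergence plus the already-established saddle geometry.
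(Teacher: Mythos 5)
Your proposal is correct and matches the approach the paper has in mind; the paper's own ``proof'' of this lemma is essentially a one-line deferral (``follow directly from \eqref{gxi.length} and Lemma \ref{lem:data}'' for $a_\ep, d_\ep$, and ``Step 3 of the proof of Theorem 4.4 of \cite{JSt}'' for $c_\ep$), and you have reconstructed the intended argument. Your contradiction argument for $\liminf c_\ep \ge L$ is exactly right: extract a flat-norm limit $v$ via compactness (Theorem \ref{thm:Mesaric-Gamma}, part 1), use continuity of $P_{WV}$ on $(V,\|\cdot\|_\calF)$ to get $P_{WV}(v)=0$, apply the Gamma-liminf (part 2) to get $E_V(v)\le L-\kappa$, and then contradict the saddle inequality \eqref{saddlepoint1} whether or not $v=T_\gamma$. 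The $d_\ep$ argument (upper bound from Lemma \ref{lem:data}(3), lower bound from $w=0$ plus Gamma-liminf) is also exactly the intended one.

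One bookkeeping point worth flagging, which you do partly acknowledge. The statement of Lemma \ref{lem:data}(3) is $\EUep(U^{\ep,0}_w)\le L-c_0|w|^2+o(1)$, whereas your argument for the \emph{exact} limit $a_\ep\to L-2\delta$ (with $\delta$ defined via $L-2\delta=\max_{|w|=R}\mathrm{length}(\gamma_w)$) needs the sharper upper bound $\EUep(U^{\ep,0}_w)\le\mathrm{length}(\gamma_w)+o(1)$ uniformly in $w$ --- item (3) as stated only bounds by $L-c_0|w|^2\ge\mathrm{length}(\gamma_w)$, so it pins down $\limsup a_\ep$ but not $\lim a_\ep$. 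The sharper estimate is indeed what underlies (3) (it is the Gamma-limsup estimate referenced in Appendix \ref{App.A} and proved in \cite{Mesaric}), so the argument closes, but strictly one must reach for that, not just for the stated item (3). It is also worth noting that the paper never actually uses the exact value of $\lim a_\ep$ downstream --- Lemma \ref{lem:goodtrajectory} only needs that $a_\ep$ is eventually below $c_\ep$ --- so the cheaper bound $\limsup a_\ep\le L-c_0R^2<L\le\liminf c_\ep$, which follows directly from (3) with $\delta:=\frac12 c_0 R^2$, is already enough for the paper's purposes; your more careful definition of $\delta$ gives the stated two-sided convergence, which is tidier but not strictly necessary.
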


The assertions about $a_\ep$ and $d_\ep$ follow directly from \eqref{gxi.length} and 
Lemma \ref{lem:data},
and 
Step 3 of the proof of Theorem 4.4 of \cite{JSt} shows exactly that
$c_\ep\to L$. 
The proof uses only ingredients that we have collected in Theorem \ref{thm:Mesaric-Gamma} and Lemma \ref{thm:mesaric} below. 

Below we will not refer explicitly to the assertion about $\lim_{\ep\to 0}a_\ep$, but it plays a role in the proof of Lemma \ref{lem:goodtrajectory}, and together with the lower bound for $\liminf c_\ep$, it  reflects the asymptotic minmax geometry of $\EUep$.

Proposition  \ref{prop:goodtrajectory} will essentially follow from the next fact.

\begin{lemma}\label{lem:goodtrajectory}For each $r>0$ there exists $\ep_0>0$ 
and $R>0$ such that for every $0<\ep<\ep_0$ and
every $\tau>0$, there exists $w = w(\ep,\tau)$ such that
\beq\label{goodtraj1}
P_{WU}(U_\ep(\tau,w))= 0, \qquad \| \frac 1 \pi \star JU_\ep(\tau,w) - J_\gamma\|_V\le r.
\eeq
As a result, $w = w(\ep,\tau)$ satisfies 
\beq\label{Sandwich}
d_\ep \ge \EUep(U_\ep(t,w))\ge \EUep(U_\ep(\tau,w))\ge c_\ep\quad\mbox { for all } t\in [0,\tau] \mbox{ and }\ep\in (0,\ep_{0}).
\eeq
Finally, $w(\ep,\tau)\to 0$ as $\ep\to 0$.
\end{lemma}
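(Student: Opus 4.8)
The plan is to prove \eqref{goodtraj1} by a degree-theoretic (Brouwer) argument applied to the composition $w\mapsto P_{WU}(U_\ep(\tau,w))$, viewed as a continuous map from $\overline{W}=\overline{B_R^\ell}$ to $\R^\ell$, and then to deduce \eqref{Sandwich} and the vanishing of $w(\ep,\tau)$ from the energy inequalities of Lemma \ref{lem:acd}.

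\medskip\noindent\textbf{Step 1: Continuity and boundary behaviour of the map.} First I would observe that, by Lemma \ref{lem:contflow} and the definition \eqref{theflow} of $U_\ep$, the map $(t,w)\mapsto U_\ep(t,w)\in H^1(M;\C)$ is continuous, and since $J:H^1(M;\C)\to V$ is continuous (the Jacobian is controlled in the flat norm by the $H^1$ norm, cf.\ Theorem \ref{thm:Mesaric-Gamma}) and $P_{WV}:V\to\R^\ell$ is continuous by Lemma \ref{thm:mesaric}, the map $F_\ep:\overline{W}\to\R^\ell$, $F_\ep(w):=P_{WU}(U_\ep(\tau,w))$, is continuous. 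The key structural fact is the boundary identity \eqref{fix.boundary}: if $|w|=R$ then $U_\ep(\tau,w)=U^{\ep,0}_w$, so $F_\ep(w)=P_{WV}(\tfrac1\pi\star JU^{\ep,0}_w)$. By conclusion (4) of Lemma \ref{lem:data}, $\tfrac1\pi\star JU^{\ep,0}_w\to T_{\gamma_w}$ in $V$ uniformly in $w$, and by \eqref{inverse} together with \eqref{Psmooth} (using $\xi(w)=\sum w_j\xi_j$ so that $P_{WV}(T_{\gamma_w})=w$), we get $F_\ep(w)=w+o(1)$ uniformly for $|w|=R$ as $\ep\to 0$.

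\medskip\noindent\textbf{Step 2: Brouwer degree and solvability.} Since $F_\ep$ restricted to $\partial W$ is a uniformly small perturbation of the identity, for $\ep$ small enough $F_\ep(w)\neq 0$ on $\partial W$ and the straight-line homotopy $H(s,w)=(1-s)F_\ep(w)+s\,w$ avoids $0$ on $\partial W$; hence $\deg(F_\ep,W,0)=\deg(\mathrm{id},W,0)=1\neq 0$, so there exists $w=w(\ep,\tau)\in W$ with $F_\ep(w)=0$, i.e.\ $P_{WU}(U_\ep(\tau,w))=0$. This gives the first equality in \eqref{goodtraj1}. For the flat-norm bound, I would argue by contradiction: if no such $w$ could be chosen with $\|\tfrac1\pi\star JU_\ep(\tau,w)-T_\gamma\|_V\le r$, then every zero of $F_\ep$ would lie in the region where $\|\tfrac1\pi\star JU_\ep(\tau,w)-T_\gamma\|_V> r$; combined with the energy inequality $\EUep(U_\ep(\tau,w))\le \EUep(U_\ep(0,w))\le d_\ep\to L$ (energy is nonincreasing along the heat flow), and with the definition of $c_\ep$ restricted to the set $\{\,u:\|\tfrac1\pi\star Ju-T_\gamma\|_V\le\delta_0\,\}$, one pushes toward a contradiction with $\liminf c_\ep\ge L$ provided $r\le\delta_0$ and $\ep$ is small. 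Concretely: pick $\delta_0$ from Lemma \ref{lem:acd}; if $\|\tfrac1\pi\star JU_\ep(\tau,w)-T_\gamma\|_V\in(r,\delta_0]$ then $U_\ep(\tau,w)$ is admissible for $c_\ep$, so $\EUep(U_\ep(\tau,w))\ge c_\ep\ge L-o(1)$, while on the other hand the saddle structure (via \eqref{saddlepoint1}) forces the energy of configurations on the ``stable slice'' that are genuinely distinct from $T_\gamma$ to be strictly above $E_V(T_\gamma)=L$; I would need the quantitative version, i.e.\ that on $\{P_{WV}(v)=0,\ r\le\|v-T_\gamma\|_\calF\le\delta_0\}$ one has $E_V(v)\ge L+\omega(r)$ for some modulus $\omega(r)>0$, which follows from compactness and \eqref{saddlepoint1}; transferring this to $\EUep$ by $\Gamma$-convergence (Theorem \ref{thm:Mesaric-Gamma}, part 2) and combining with the upper bound $d_\ep\to L$ yields the contradiction once $\ep$ is small.

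\medskip\noindent\textbf{Step 3: The sandwich and $w(\ep,\tau)\to 0$.} With $w=w(\ep,\tau)$ chosen as above, \eqref{Sandwich} is immediate: $\EUep(U_\ep(t,w))$ is nonincreasing in $t$ (Ginzburg-Landau heat flow is the gradient flow of $\EUep$), its value at $t=0$ is at most $d_\ep$ by conclusion (3) of Lemma \ref{lem:data} and $|w|\le R$, and its value at $t=\tau$ is at least $c_\ep$ because $U_\ep(\tau,w)$ satisfies $P_{WU}=0$ and $\|\tfrac1\pi\star JU_\ep(\tau,w)-T_\gamma\|_V\le r\le\delta_0$, so it is admissible in the minimization defining $c_\ep$. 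Finally, to see $w(\ep,\tau)\to 0$: from \eqref{Sandwich} and \eqref{Limits} we get $\EUep(U_\ep(0,w(\ep,\tau)))\ge c_\ep\to L$, while conclusion (3) of Lemma \ref{lem:data} gives $\EUep(U_\ep(0,w))\le L-c_0|w|^2+o(1)$; comparing, $c_0|w(\ep,\tau)|^2\le o(1)$, hence $w(\ep,\tau)\to 0$ as $\ep\to 0$, uniformly in $\tau$.

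\medskip\noindent\textbf{Main obstacle.} The delicate point is Step 2, specifically extracting the \emph{quantitative} strict-saddle estimate $E_V(v)\ge L+\omega(r)>0$ on $\{P_{WV}(v)=0,\ r\le\|v-T_\gamma\|_\calF\le\delta_0\}$ from the merely qualitative statement \eqref{saddlepoint1}, and then propagating it through $\Gamma$-convergence with errors that are uniform in $\tau$ (note $\tau$ is arbitrary, but the flow only enters through the monotonicity of energy and the fixed endpoints $t=0,\tau$, so uniformity in $\tau$ is not actually problematic once the $\ep$-dependence is controlled). One must be careful that the degree argument and the flat-norm selection are done \emph{simultaneously} — i.e.\ among all zeros of $F_\ep$ one of them lies in the desired flat ball — rather than separately; the cleanest route is the contradiction argument sketched above, ruling out the alternative that \emph{every} zero is far from $T_\gamma$.
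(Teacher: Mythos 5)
Your overall framework --- degree theory for $w\mapsto P_{WU}(U_\ep(\cdot,w))$, monotonicity of $\EUep$ along the heat flow, the boundary identity \eqref{fix.boundary}, and the deduction of \eqref{Sandwich} and $w(\ep,\tau)\to 0$ from Lemma \ref{lem:acd} and Lemma \ref{lem:data}~(3) --- matches the scheme the paper attributes to Steps 5--8 of Theorem~4.4 in \cite{JSt}. Steps~1 and~3 are essentially correct.

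However, Step~2 has a genuine gap. The Brouwer degree argument applied to $F_\tau=P_{WU}(U_\ep(\tau,\cdot))$ \emph{alone} produces a zero $w\in W$, but your contradiction argument only rules out the case $\|\tfrac1\pi\star JU_\ep(\tau,w)-T_\gamma\|_\calF\in(r,\delta_0]$. You never exclude the possibility that \emph{every} zero of $F_\tau$ satisfies $\|\tfrac1\pi\star JU_\ep(\tau,w)-T_\gamma\|_\calF>\delta_0$, and in that case $U_\ep(\tau,w)$ is not admissible in the minimization defining $c_\ep$, the constraint in \eqref{saddlepoint1} is violated, and no contradiction follows. There is no a priori bound keeping the flat distance from $T_\gamma$ below $\delta_0$ along the flow --- controlling that distance is, in essence, the main point of the whole paper. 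The missing ingredient is the \emph{continuation} version of the degree argument: because $\deg(F_t,W,0)=1$ for all $t\in[0,\tau]$ and $F_t$ is $t$-independent on $\partial W$, there is (by the Leray--Schauder continuation principle) a \emph{connected} subset $\mathcal{C}\subset[0,\tau]\times W$ of $\{(t,w):F_t(w)=0\}$ joining $\{0\}\times W$ to $\{\tau\}\times W$. The continuous function $(t,w)\mapsto\|\tfrac1\pi\star JU_\ep(t,w)-T_\gamma\|_\calF$ is small at the $t=0$ end of $\mathcal{C}$ (for $R$ and $\ep$ small), so if it exceeds $r$ at the $t=\tau$ end, the intermediate value theorem supplies $(t^*,w^*)\in\mathcal{C}$ with the flat distance equal to $r\le\delta_0$ \emph{and} $P_{WU}(U_\ep(t^*,w^*))=0$; the energy comparison (monotonicity plus $c_\ep$, or your quantitative strict-saddle bound fed through $\Gamma$-liminf) then gives the contradiction at $(t^*,w^*)$. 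This is precisely what the paper means by applying degree theory ``as $t$ varies from $0$ to $\tau$.'' The quantitative strict-saddle estimate you flag as the main obstacle is a legitimate subsidiary point (it does follow from Federer--Fleming compactness, lower semicontinuity of mass, and \eqref{saddlepoint1}), but the structurally missing idea is the continuation argument, not the quantitative saddle.
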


\begin{proof}

One may prove \eqref{goodtraj1} by simply repeating the arguments
from Steps 5-8 of the proof of Theorem $4.4$ in \cite{JSt}, for $r >0$ such that $0< r<\delta_{0}$, where
$\delta_{0}$ is the constant from item $\eqref{saddlepoint1}$ of Lemma \ref{thm:mesaric}. Some comments are in order:

First, the argument in \cite{JSt} is stated for a pseudo-gradient flow (see Lemma 4.8, \cite{JSt}) with certain properties that our flow $(t,w)\to U_\ep(t,w)$ does not
possess. These are in fact not needed for the proof of \eqref{goodtraj1}, and some of them may appear in \cite{JSt} only because there Lemma 4.8 is quoted directly from a standard
text, which provides more than is actually needed.
These are the only properties of the flow that are  required for the proof of \eqref{goodtraj1}:
\begin{itemize}
    \item $t\mapsto E_\ep(U_\ep(t,w))$ is nonincreasing.
    \item $t\mapsto U_\ep(t,w)$ is constant for $w\in \partial W$, see \eqref{fix.boundary}.
    \item continuity properties of the flow, as summarized in Lemma \ref{lem:contflow}.
\end{itemize}
All of these are available here.

Without going into detail, we remark that the basic strategy of the proof is to apply degree theory arguments
to the maps $w\mapsto P_{WU}(U_\ep(t,w)):W\to W$ as $t$ varies from $0$ to $\tau$ (where $\tau=1$ in
\cite{JSt}, a harmless normalization).

Next, \eqref{Sandwich} follows directly from \eqref{goodtraj1} and Lemma \ref{lem:acd}.
Finally, we deduce from \eqref{Sandwich} and Lemma \ref{lem:acd} that
\[
 E_{\ep}\left(U_{\ep}(0,w)\right) = E_{\ep}\left(U^{\ep,0}_w\right) \to L
\qquad\mbox{ as $\ep\to 0$}.
\] 
Then conclusion (3) of Lemma \ref{lem:data}
implies that $w = w(\ep,\tau)\to 0$ as $\ep\to 0$.
\end{proof}

We are now in a position to present:

\begin{proof}[Proof of Proposition  \ref{prop:goodtrajectory}]
Let $0<\delta<\delta_{0}$ be given where $\delta_{0}$ is as in item $\eqref{saddlepoint1}$ of Lemma \ref{thm:mesaric}.
We take $\ep_{0}(\delta)$ and $R(\delta)$ as defined in Lemma $\ref{lem:goodtrajectory}$ and set
$u_{\ep}(x,t;\delta,\tau)=U_{\ep}(t,w(\ep,\tau))$.
By shrinking $\ep_0$ if necessary, we may ensure that $\left|w(\ep,\tau)\right|<\frac{R(\delta)}{2}$, as shown in
Lemma $\ref{lem:goodtrajectory}$.
Thus, $u_{\ep}(x,t;\delta,\tau)$ solves $\eqref{glh}$ since
\[U_{\ep}(t,w(\ep,\tau))=U_{w}^{\ep,1}(t,x)\]
for such $\ep>0$.

Since $\| U^{\ep,0}_w\|_{L\infty}\le 1$, it is clear that $|u_\ep(x,t)|\le 1$ everywhere,
and all other conclusions of the Proposition follow directly from Lemmas \ref{lem:acd}
and \ref{lem:goodtrajectory}.

\end{proof}

\section{Proof of  the main result}\label{sec:5}

The main result of this paper, stated more informally in the introduction  as Theorem \ref{thm:1a}, can now be phrased precisely as
\begin{theorem}\label{thm:1}
Let $(M,g)$ be a closed oriented $3$-dimensional Riemannian manifold, and 
let $\gamma$ be a closed, embedded, nondegenerate geodesic of length $L$.
Assume in addition that 
$\gamma = \partial S$ (in the sense of 
Stokes' Theorem) for some $2$-dimensional submanifold $S$ of $M$. 

Then for every $r>0$, there exists $\ep_1(r)>0$ such that if $0<\ep <\ep_1(r)$,
then there is a solution $u_\ep$ of the Ginzburg-Landau equations
\beq\label{PGL}
 - \Delta u_\ep + \frac 1{\ep^2}(|u_\ep|^2-1)u_\ep = 0 \qquad\mbox{ on } M
\eeq
such that
\[
\| \frac 1 \pi \star Ju_\ep - T_\gamma \|_{\calF}  \le r
\]
and
\[
\left| \EUep(u_\ep) - L \right| < r.
\]
As a result, there exists a sequence $(u_\ep)_{\ep>0}\subset H^1(M;\C)$
of solutions of the Ginzburg-Landau equations
such that 
\beq\label{t1a.conclusion}
\| \frac 1 \pi\star J u_\ep - T_\gamma\|_\calF \to 0, \qquad 
\EUep(u_\ep) \to L\qquad\qquad\mbox{ as $\ep\to 0$.}
\eeq
\end{theorem}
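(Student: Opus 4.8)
The plan is to combine the ingredients already assembled in Sections 3 and 4: the ``good trajectories'' of the Ginzburg--Landau heat flow from Proposition~\ref{prop:goodtrajectory}, the asymptotic analysis of the heat flow in Theorem~\ref{T:BOS}, and the non-existence of nearby stationary varifolds in Proposition~\ref{lem:noVstar}. Fix $r>0$. For each $\delta>0$ and each $\tau>0$, Proposition~\ref{prop:goodtrajectory} produces a heat-flow solution $u_\ep(\cdot,t;\delta,\tau)$ with $|u_\ep|\le 1$ whose initial vorticity is $\calF$-close to $T_\gamma$, whose initial energy is at most $L+\delta$, and whose energy stays above $L-\delta$ for all $t\in[0,\tau]$. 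Since the energy is nonincreasing along the flow, this pins $\EUep(u_\ep(t))$ into the window $[L-\delta,L+\delta]$ for all $t\in[0,\tau]$. The first claim of Theorem~\ref{thm:Mesaric-Gamma} then forces $\tfrac1\pi\star Ju_\ep(t)$ to stay precompact in $V$ with limits in $\calR_1'(M)$.

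The core step is to upgrade ``the energy stays near $L$'' to ``the vorticity stays $\calF$-close to $T_\gamma$''. First I would argue by contradiction at the level of the heat flow: suppose that for some sequence $\ep_k\to 0$ (with $\delta_k\to 0$ and $\tau_k\to\infty$ chosen appropriately) there is a first time $t_k\in[0,\tau_k]$ at which $\|\tfrac1\pi\star Ju_{\ep_k}(t_k)-T_\gamma\|_\calF = r$. Applying Theorem~\ref{T:BOS} to the sequence $u_{\ep_k}$ (after a time shift or diagonal extraction), one extracts limiting measures $\mu_*^t = \tfrac12|\nabla\Phi_*|^2\vol + \nu_*^t$ with $(\nu_*^t)$ a Brakke flow enjoying the lower density bound \eqref{ldb}. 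Because the total energy is squeezed between $L-\delta_k$ and $L+\delta_k$ on a long time interval, the limiting total mass $t\mapsto \mu_*^t(M)$ is constant and equal to $L$; since it is also nonincreasing and the diffuse part $\tfrac12|\nabla\Phi_*|^2\vol$ has nonincreasing (in fact, by the heat equation, eventually constant) mass, one concludes $\Phi_*$ is constant and $\nu_*^t(M)\equiv L$ is constant on an interval, so by the Lemma following \eqref{nut.decr} there is a \emph{stationary} rectifiable $1$-varifold $V_*$ with $\nu_*^t = V_*$, $V_*(M)\le L$, and $\Theta_*\ge\eta(t)>0$. Meanwhile the current $J_1 := \lim_k \tfrac1\pi\star Ju_{\ep_k}(t_k) \in \calR_1'(M)$ satisfies $\|J_1-T_\gamma\|_\calF = r$ by continuity of the flat norm under $\calF$-convergence, and lower semicontinuity of mass under the Jacobian convergence gives $V_*\ge\|J_1\|$ (one uses that $\mu_*^t$ dominates $\pi$ times the Jacobian measure, a standard consequence of $|u_\ep|\le 1$ and the structure theorem). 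Thus $(V_*,J_1)$ satisfies exactly the hypotheses \eqref{Vstar}--\eqref{Vstarontop} of Proposition~\ref{lem:noVstar}, for $r$ below the threshold $r_0(M,\gamma,\eta)$ — a contradiction. Hence for $\ep$ and $\delta$ small, $\|\tfrac1\pi\star Ju_\ep(t)-T_\gamma\|_\calF < r$ for all $t\in[0,\tau]$.

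Once the vorticity is trapped near $T_\gamma$ for all $t\in[0,\tau]$ with $\tau$ arbitrarily large, I would produce the desired \emph{static} solution by letting $\tau\to\infty$: along the heat flow with $|u_\ep|\le 1$, parabolic regularity gives uniform $C^k_{loc}$ bounds, and the energy dissipation identity $\int_0^\infty\int_M|\partial_t u_\ep|^2 \le \EUep(u_\ep^0)\cdot\pi\logeps$ forces a sequence of times $t_n\to\infty$ along which $\|\partial_t u_\ep(t_n)\|_{L^2}\to 0$; the corresponding $u_\ep(\cdot,t_n)$ subconverge to a solution of the static equation \eqref{PGL} (for each fixed $\ep$) with $\EUep$ in $[L-\delta,L+\delta]$ and vorticity within $r$ of $T_\gamma$ in $\calF$-norm. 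Choosing $\delta = \delta(r)$ and $\ep_1(r)$ small enough then gives, for $0<\ep<\ep_1(r)$, a solution $u_\ep$ of \eqref{PGL} with $\|\tfrac1\pi\star Ju_\ep - T_\gamma\|_\calF \le r$ and $|\EUep(u_\ep)-L|<r$. Taking $r = r_m\to 0$ and a diagonal sequence yields the sequence $(u_\ep)$ with the convergence \eqref{t1a.conclusion}, and translating flat/energy convergence into the pairings against $1$-forms and functions in the statement of Theorem~\ref{thm:1a} is routine (the energy convergence plus the lower density bound and the vanishing of the diffuse part upgrade weak-$*$ convergence of $\tfrac1{\pi\logeps}e_\ep(u_\ep)\vol$ to $\calH^1\rest\Gamma$).

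The main obstacle is the contradiction step: one must carefully verify that the limiting object extracted from Theorem~\ref{T:BOS} genuinely satisfies \emph{all} of the hypotheses of Proposition~\ref{lem:noVstar} — in particular the constancy of total mass (which requires controlling the diffuse part $\Phi_*$ and ruling out energy loss to infinity in space as $\ep\to 0$), the inequality $V_*\ge\|J_1\|$ at the limit, and the precise value $\|J_1-T_\gamma\|_\calF = r$ (the ``first exit time'' must be shown to be attained and to pass to the limit, which is where one needs the initial data to satisfy $\|\tfrac1\pi\star Ju_\ep^0-T_\gamma\|_\calF<\delta\ll r$ together with continuity of $t\mapsto \tfrac1\pi\star Ju_\ep(t)$ in $\calF$-norm, itself a consequence of the energy dissipation bound and Theorem~\ref{thm:Mesaric-Gamma}). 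A secondary technical point is matching the density constant: the $\eta(t)$ in \eqref{ldb} depends on $t$, so one should fix a good time (or use $\liminf$ in $t$) to extract a single $\eta>0$ before invoking Proposition~\ref{lem:noVstar}, whose threshold $r_0$ depends on $\eta$; this forces the order of quantifiers to be handled with some care.
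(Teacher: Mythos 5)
Your proposal follows essentially the same three-step architecture as the paper's proof: first the contradiction argument via Theorem~\ref{T:BOS} and Proposition~\ref{lem:noVstar} to show the vorticity stays $\calF$-close to $T_\gamma$ along the good trajectories of Proposition~\ref{prop:goodtrajectory}, and then extraction of a static solution from the energy-dissipation bound and elliptic regularity. The ingredients you invoke, and the way you stitch them together, are the right ones.

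Two points where the paper is more careful than your sketch, and which deserve to be filled in rather than merely flagged as ``obstacles.'' First, before applying Theorem~\ref{T:BOS} to the time-shifted sequence $\tilde u_k(x,t)=u_k(x,t+t_k-1)$, one must know that the first-exit times $t_k$ are bounded away from $0$ (so that the shift makes sense and $t=1$ lands at a strictly positive time in the parabolic limit). The paper devotes its Step~1 to this: if $t_k\le K$, the dissipation $\int_0^{t_k}\int_M|\partial_t u_k|^2/(\pi|\log\ep_k|)\le 2\delta_k\to 0$ forces the weak-$*$ limits of the energy measures at $t=0$ and $t=t_k$ to coincide, so $\mu_1=\mu_0=\|T_\gamma\|$; but $\mu_1\ge\|J_1\|$ with $\|J_1-T_\gamma\|_\calF=r>0$ and $\mu_1(M)=L$ is incompatible with $\mu_1=\|T_\gamma\|$. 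Your suggested route via uniform-in-$\ep$ $\calF$-continuity of $t\mapsto\frac1\pi\star Ju_\ep(t)$ would also do the job, but it needs to be established, not just mentioned. Second, in your final step you speak of ``letting $\tau\to\infty$'' along a single trajectory with $t_n\to\infty$; recall that $u_\ep(\cdot,\cdot;\delta,\tau)$ genuinely depends on $\tau$, and the vorticity control holds only on $[0,\tau]$. The paper handles this by taking $\tau_k=2^k$, choosing for each $k$ a good time $\sigma_k\in(0,2^k)$ where $\int_M|\partial_t u_k(\cdot,\sigma_k)|^2\vol\le \delta_1\pi|\log\ep|\,2^{-k}$, so that $w_k:=u_k(\cdot,\sigma_k)$ stays inside the vorticity-controlled window and converges (weakly in $H^2$, via the elliptic estimate) to a solution of \eqref{PGL}. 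Your sketch reaches the same endpoint but should be phrased as a diagonal argument in $(k,\sigma_k)$ rather than a single-trajectory limit.

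Everything else, including the identification $\mu^t_*(M)\equiv L$, the harmonicity and then constancy of $\Phi_*$, the stationarity of $\nu^t_*=V_*$ via \eqref{nut.stationary}, the absolute-continuity argument giving $V_*\ge\|J_1\|$, and the density-parameter bookkeeping before invoking Proposition~\ref{lem:noVstar}, matches the paper.
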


We remark that standard Gamma-convergence results (see Theorem \ref{thm:Mesaric-Gamma}) imply that
the sequence of solutions in \eqref{t1a.conclusion} satisfies
\beq\label{abitbetter}
\frac {e_\ep(u_\ep)}{\pi\logeps} \to \| T_\gamma\| = \calH^1\rest \Gamma \quad\mbox{ weakly as measures},
\eeq
which is the last conclusion of Theorem \ref{thm:1a}. Indeed, since $\EUep(u_\ep)$ is uniformly bounded,
there exists some measure $\mu$ such that 
\[
\frac {e_\ep(u_\ep)}{\pi\logeps} \to \mu\quad\mbox{ weakly as measures},
\]
after perhaps passing to a subsequence, and $\mu(M) = \lim_{\ep\to 0} \EUep(u_\ep) = L$. Standard Gamma-convergence results and \eqref{t1a.conclusion} imply
\[
\mu \ge \| T_\gamma\|,
\]
and since $\| T_\gamma\|(M) = L = \mu(M)$, it follows that $\mu= \|T_\gamma\|$, proving \eqref{abitbetter}.

\begin{proof}
The proof relies on an improvement on the properties of the flow defined in the previous section. The assertion is that the trajectory solving the Ginzburg-Landau flow identified in Proposition \ref{prop:goodtrajectory} remains close to $T_{\gamma}$ in the flat norm. More precisely, we will show:
\vskip.1in
\noindent{\bf Claim:}\\
For every $r>0$, there exist positive constants $\delta_1(r)$ and $\ep_1(r) < \ep_0(\delta_1) $, depending on $r$, 
such that  for every $\ep \in (0,\ep_1)$ and $\delta\in (0,\delta_1)$, and for every $\tau>0$, if $u_\ep = u_\ep(x,t;\delta,\tau)$ is the solution of
\eqref{glh} found in Proposition \ref{prop:goodtrajectory}, then
\begin{equation}
\|\frac 1\pi \star Ju_\ep(t) - T_\gamma\|_\calF < r \qquad\mbox{ for all }t\in [0,\tau].    \label{alwaysclose}
\end{equation}

To establish this, we assume toward a contradiction that there exists $r>0$ and sequences $ \delta_k, \ep_k\to 0$
and $0<t_k \le \tau_k$ such that  $u_k(x,t) := u_{\ep_k}(x,t; \delta_k,\tau_k)$ satisfies
\begin{equation}
\| \frac 1\pi \star Ju_k(\cdot, t_k) - T_\gamma\|_\calF = r .    \label{contra1}
\end{equation}
Clearly, we may assume that $r<r_0$ for some $r_0$ to be chosen below.

\medskip
\noindent {\it Step 1:}
Under the assumption \eqref{contra1}, we will argue that necessarily $t_k\to \infty$ as $k\to \infty$.
(In fact, below we only need to know that $t_k$ is bounded away from $0$.)

Assume toward a contradiction that  
$\liminf_k\   t_k <  K$, for some $K>0$.
By passing to subsequences, relabelling, and invoking standard compactness,
continuity, and Gamma-convergence results ({\em i.e.} Theorem \ref{thm:Mesaric-Gamma}), we may assume that
the following hold. 

First, $t_k\le K$ for all $k$.

Second, there exists a $1$-current
$J_1\in \calR_1\cap \calF_1'(M)$ such that
\[
\|\frac 1 \pi \star Ju_k(\cdot, t_k)  - J_1\|_\calF \to 0.
\]
Hence, by \eqref{contra1},
\beq\label{J0-J1}
\| J_1- T_\gamma\|_\calF = r.
\eeq
Third, there exists a Radon measure $\mu_1$ such that
\[
\frac {e_{\ep_k}(u_k(\cdot, t_k)) }  { \pi\logeps}\rightharpoonup \mu_1,
\qquad\mbox{weakly as measures.}
\]
and in addition
\beq\label{mugeJ}
\mu_1\ge \|J_1\|,
\eeq
cf. \eqref{tovar}.

Finally, since $\delta_k\to 0$ and $M$ is compact, it follows from 
Proposition \ref{prop:goodtrajectory} that
\beq\label{massL}
\mu_1(M) 
 = L = \| T_\gamma\|(M).
\eeq

We next claim that
\[
\frac {e_{\ep_k}(u_k^0(\cdot)) }  { \pi\logeps}\rightharpoonup \mu_0 = \|T_\gamma\|,
\qquad\mbox{weakly as measures.}
\]
Indeed, we may assume that $\frac {e_{\ep_k}(u_k^0(\cdot)) }  { \pi\logeps}$ converges weakly as measures to a limit $\mu_0$ as $k\to \infty$.  Then
recalling that $\| \frac 1\pi \star Ju_{k}^0 - T_\gamma\|_\calF \le \delta_k\to 0$,
standard Gamma-convergence results as in \eqref{mugeJ} imply that
$\mu_0\ge \| T_\gamma\|$. On the other hand, as in \eqref{massL}, 
\[
\mu_0(M)  = L = \mbox{length}(\gamma) = \|T_\gamma\|(M),
\]
so it follows that in fact $\mu_0 = \|T_\gamma\|$ as claimed.
It then follows from \eqref{J0-J1} and \eqref{mugeJ} that $\mu_1\ne \mu_0 = \|T_\gamma\|$.

We will obtain a contradiction, completing Step 1, by showing that 
under our assumptions $\mu_0$ and $\mu_1$ must be equal. 
Indeed, after taking the inner product of \eqref{glh} with $\pp_t u_k$, 
standard computations show that
\[
\pp_t e_{\ep_k}(u_k) = - |\pp_t u_k|^2 + \mbox{div}(\pp_t u_k \cdot \nabla u_k).
\]
Multiplying by a function $\phi\in C^1(M)$ and integrating by parts,
\begin{equation}\label{tk-sk1}
-
\left. \int_M \phi e_{\ep_k}(u_k)\vol\right|_{0}^{t_k}
=\int_{0}^{t_k}\int _M \phi |\pp_t u_k|^2  - (\nabla \phi , \pp_t u_k\cdot\nabla u_k)_g\, \vol\,dt
\end{equation}
Clearly
\[
\frac 1{\pi|\log\ep_k|}\left. \int_M \phi e_{\ep_k}(u_k)\vol\right|_{0}^{t_k}
\rightarrow\int_M \phi \ (\mu_0  - \mu_1).
\]
On the other hand, it is not hard to see that after dividing by $\pi |\log\ep_k|$, 
the right-hand side of \eqref{tk-sk1} tends to $0$ as $k\to\infty$.
First, taking $\phi=1$ in \eqref{tk-sk1}, we see that 
\[
 \frac 1{\pi |\log\ep_k|}\int_{0}^{t_k}\int_M   \,|\pp_t u_k|^2 \vol \, dt
 = E_{\ep_k}(u_k^0) - E_{\ep_k}(u_k(\cdot, t_k))   \le 2\delta_k\to 0
\]
as $k\to \infty$. It immediately follows that
\begin{align*}
\left| \int_{0}^{t_k}\int_M  \phi \frac{ |\pp_t u_k|^2 }{\pi |\log\ep|}\vol\,dt \right|
\to 0 \qquad \mbox{ as }k\to \infty
\end{align*}
Similarly, from Cauchy-Schwarz, the fact that
\[
\int_M \frac {|\nabla u_k(\cdot, t)|^2}{\pi|\log\ep_k|} \vol\, dt \le E_{\ep_k}(u_k(\cdot, t)) \le L+ \delta_k\qquad\mbox{ for all $t\ge 0$}
\]
and the assumption that  $t_k\le K$,
we easily see that
\[
\frac 1{\pi|\log \ep_k|}\left|
\int_{0}^{t_k}\int_M   (\nabla \phi , \pp_t u_k\cdot\nabla u_k)_g\, \vol\,dt \right|
\to 0 \qquad\mbox{ as }k\to\infty.
\]
Combining these, we conclude that $\int_M \phi (\mu_1-\mu_0) = 0$
for all $\phi\in C^1(M)$, and hence that $\mu_0=\mu_1$.
This contradiction yields the conclusion that if \eqref{contra1}  holds, then $t_k\to\infty$, completing Step 1.
\vskip.1in
\noindent{\it Step 2}: We will now reach a contradiction to \eqref{contra1}, and so obtain Claim \ref{alwaysclose}.

To this end, we apply Theorem \ref{T:BOS} to the sequence of functions
\[
\tilde u_k(x,t) = u_k(x, t+t_k-1),
\]
which, in particular, satisfies \eqref{glh} on $M\times [0,\infty)$ with $\ep = \ep_k\to 0$.
This yields a function $\Phi_*:M\times (0,1]\to \R$ that solves the heat
equation, as well as measures $(\mu^t_*)_{0<t\leq 1}$
and $(\nu^t_*)_{0<t\leq 1}$ such that
\[
\frac {e_{\ep_k}(\tilde u_k(\cdot, t)) }
{\pi|\log\ep_k|}\vol\rightharpoonup
\mu^t_* = \frac 12 |\nabla \Phi_*(\cdot ,t)|^2\vol + \nu^t_*
\]
weakly as measures and $(\nu^t_*)_{0<t\leq 1}$ is a $1$- dimensional Brakke flow
satisfying \eqref{nustar.form} and \eqref{ldb} (with $n=2$).

As with \eqref{massL}, and using Claim 1, it follows that
\beq\label{mut.const}
\mu^t_*(M)=L\qquad\mbox{ for all }t\in (0,1].
\eeq
We recall the standard estimate
\beq\label{spe}
\frac d{dt}\int_M \frac{|\nabla\Phi_*(\cdot, t)|^2}2\vol
=  - \int_M |\pp_t\Phi_*|^2\, dx\le 0
\eeq
(the counterpart for the linear heat equation of \eqref{tk-sk1}).
Since $t\mapsto \nu^t_*(M)$ is also nonincreasing, as noted in \eqref{nut.decr},
we conclude from \eqref{mut.const} that both 
\[
 \nu^t_*(M)\quad \mbox{ and }\quad\int_M \frac{|\nabla\Phi_*(\cdot, t)|^2}2\vol  \qquad\mbox{ are both
independent of $t\in (0,1]$}.
\]
It then follows from \eqref{spe} that $\pp_t\Phi_* = 0$ and hence that $\Phi_* = \Phi_*(x)$
is independent of $t$ and harmonic. Similarly, it follows from 
\eqref{nut.const}, \eqref{nut.stationary} that there exists a stationary
$1$-dimensional varifold $V_*$ such that
\[
\nu^t_*=V_*\qquad\mbox{ for all }\;t\in (0,1],
\]
and then by continuity at $t=1$ as well. Also,  \eqref{nustar.form} and \eqref{ldb} imply that there exists a $1$-rectifiable
set $\Sigma_{*}\subset M$ and a function $\Theta_*$ such that
\beq\label{formofVstar}
V_* = \Theta_*(x)\calH^1\rest \Sigma_{*}, \qquad\qquad \Theta_*\ge \eta>0   \quad \calH^1 \ 
\mbox{{\em a.e.}  in }\Sigma_{*}.
\eeq
Moreover, as in the proof of Claim 1, there exists a $1$-current $J_1\in \calR_1\cap \calF_1'(M)$ such that
\[
\| \frac 1 \pi \star J\tilde u(\cdot, 1) - J_1\|_\calF = \| \frac 1 \pi \star J u(\cdot, t_k) - J_1\|_\calF \to 0
\]
and thus
\beq\label{J0.p1}
\| J_1 - T_\gamma\|_\calF=r.
\eeq
We claim that in addition
\beq\label{J0.p2}
V_* \ge \| J_1\|, \qquad V_*(M)\le L.
\eeq
The second assertion follows from \eqref{mut.const}, and
the first assertion is a consequence of standard Gamma-convergence results,
which imply that
\[
\mu^1_* = \frac 12| \nabla \Phi_*(x)|^2\vol + V_* \ge \| J_1\|.
\]
Since $\frac 12| \nabla \Phi_*(x)|^2\vol $ is absolutely continuous with respect to
$\vol$ and $\|J_1\|$ is concentrated on a $1$-rectifiable set, this implies
that $V_* \ge \| J_1\|$, as claimed.

However, through an appeal to Proposition \ref{lem:noVstar}, we see that 
no such varifold can exist. Claim \ref{alwaysclose} is established.
\vskip.1in
\noindent{\it Step 3}:
Fix $r>0$, and let $\delta_1(r)$ and $\ep_1(r)$ be as provided in Claim \ref{alwaysclose}. We may assume that $\delta_1(r)\le r$. 
For $\ep \in (0,\ep_1(r))$, let
\[
u_k = u_{\ep}(\cdot, \cdot; \frac 12 \delta_1(r), 2^k).
\]
It follows from Proposition \ref{prop:goodtrajectory} that
\begin{align*}
\delta_1(r) 
&\ge
E^\ep_U(u_k(\cdot, 0)) - 
E^\ep_U(u_k(\cdot, 2^k)) 
 =
\frac 1 {\pi|\log\ep|} \int_0^{2^k} \int_M |\pp_t u_k|^2 \vol\, dt,
\end{align*}
so there exists $\sigma_k\in (0,2^k)$ such that $w_k := u_k(\cdot, \sigma_k)$ satisfies
\[
\int_M |\Delta w_k - \frac 1{\ep^2}(|w_k|^2-1)w_k |^2\, \vol
=
\left. \int_M |\pp_t u_k|^2\, \vol\right|_{t=\sigma_k} 
\le \delta_1  \pi |\log\ep|2^{-k}.
\]
Also, it follows from Claim \ref{alwaysclose} that
\[
\| \frac 1\pi \star Jw_k - T_\gamma\|_\calF = \| \frac 1\pi \star Ju_k(\cdot, \sigma_k) - T_\gamma\|_\calF< r.
\]
Since $|u_k|\le 1$ everywhere, we have that
$\| \Delta w_k\|_{L^2(M)} \le C_\ep$, and hence by elliptic regularity, 
$\| w_k \|_{H^2} \le C_\ep$. 
One may thus extract a subsequence and a function $u_\ep\in H^2(M;\C)$ such that
$w_k\to u_\ep$ weakly in $H^2$, and it easily follows from the above that
\[
-\Delta u_\ep + \frac 1{\ep^2}(|u_\ep|^2-1)u_\ep = 0 \qquad\mbox{ and }\quad 
 \| \frac 1\pi \star Ju_\ep - T_\gamma\|_\calF\le r.
\]
Finally, we may insist that $\delta<r$, and then it follows from Proposition 
\ref{prop:goodtrajectory} that $|\EUep(u_\ep)- L|< r$.
\end{proof}

\appendix
\section{Appendix}\label{App.A}

\subsection{On the proof of Lemma \ref{lem:data}}

As remarked above, this lemma is essentially proved in \cite{Mesaric}. We describe the 
proof given there and the extremely small modifications that we need.

The idea of the proof is first to construct $U^{\ep,0}_0$, with its vorticity concentrating around the central geodesic $\Gamma$, then 
for  $w\in W$, to define
\beq\label{form.of.Uepw}
U^{\ep,0}_w := U^{\ep,0}_0\circ{}O_{w}^{-1}
\eeq
where $O_w:M\to M$ is a suitable family of diffeomorphisms indexed by $w\in W$ such that 
$(w,x)\to O_w(x)$ is smooth, described below.
\vskip.1in
{\bf Construction of $U^{\ep,0}_0$}
Recall that in \eqref{y.tau.def} we defined a map $y: K_{r_0}\to \R^2$, smooth and nonvanishing away from  $\Gamma$. Let $y^{0}:M\setminus K_{r_0/2}\to S^1$ be any smooth function
such that $y^{0}(x)=y(x)\slash\left|y(x)\right|$ in 
$K_{r_{0}}\setminus K_{r_{0}/2} $.
The existence of such a function is a consequence of the topological
assumption \eqref{gambdS}. 

Then we set $\tilde{v}^{\ep}:\mathbb{R}^{2}\to\mathbb{R}^{2}$ by
$\tilde{v}^{\ep}(p)=f\left(\frac{\left|p\right|}{\ep}\right)\frac{p}{\left|p\right|}$ where
$f:[0,\infty)\to[0,1]$ is a smooth nondecreasing function such that $f(s)=s$ for $s\in[0,1\slash2]$ and $f(s)=1$ for $s\ge1$.
Finally we define
\begin{equation*}
U^{\ep,0}_0(x) :=
\begin{cases}
\tilde{v}^{\ep}(y(x))& \text{for }x\in{}K_{r_{0}},\\
y^{0}(x)& \text{for }x\in{}M\setminus{}K_{r_{0}}.
\end{cases}
\end{equation*}
{\bf The only way} in which this construction differs from that in \cite{Mesaric}
is that there, $f$ is chosen to be $f(s) = \min(s,1)$, which is Lipschitz continuous
but not smooth. With this change,  $U^{\ep,0}_0$ is smooth.

\vskip.1in
{\bf Construction of $O_w$}. We take $O_w$ in \eqref{form.of.Uepw} to be {\em exactly} the
same map as in \cite{Mesaric}, see  pg. 62.
\vskip.1in
The construction easily implies that $(w,x)\mapsto U^{\ep,0}_w(x)$ is smooth
and hence that $\|U^{\ep,0}_w\|_{H^2}\le C_\ep $ for all $w\in W$. All other conclusions
are proved in \cite{Mesaric}, and some are obvious anyway, such as that 
$\| U^{\ep,0}_w\|_{L^\infty}\le 1.$
In particular,  $(3)$, which follows from a Gamma-limsup type estimate together with
\eqref{gxi.length}, is verified in  Lemma $5.5$ from \cite{Mesaric}.
Finally, $(4)$ follows from Lemma $5.4$ of \cite{Mesaric}.

\subsection{Proof of Lemma \ref{lem:contflow}}

\begin{proof}
The maximum principle and standard energy estimates imply that for every $t>0$,
\begin{align*}
\| U^{\ep,1}_w(\cdot, t)\|_{L^\infty(M)}& \le 1, \\
E_\ep(U^{\ep,1}_w(\cdot, t)) + \frac 1{\pi\logeps} \int_0^t\int_M |\pp_t U^{\ep,1}_w|^2\, d\vol \, dx &\le E_\ep(U^{\ep,1}_w(\cdot, 0)) \le L+1
\end{align*}
for all $|w|\le R$, provided $\ep$ and $R$ are small enough.
We next claim that for every $t>0$, there exists $C = C_{\ep, \tau}$ such that
\beq\label{H2est}
\| U^{\ep,1}_w(\cdot, t)\|_{H^2} \le C_{\ep, \tau} \qquad\mbox{ for all }t\in [0,\tau]\mbox{ and }|w|\le R.
\eeq
To specify the norm, we fix an open cover
$\{U_j\}_{ j\in J}$ of $M$, with local coordinates $\varphi_j: U_j\to V_j\subset\R^3$ on each patch, 
and a finite partition of unity $\{\eta_j\}$ subordinate to $\{ U_j\}$.
We then define
\[
\| u\|_{H^2}^2 = \| u\|_{L^2}^2 + \|  |\nabla u|_g\|_{L^2}^2 + \sum_{j\in J}\sum_{k=1}^3 \| \sqrt{\eta_j}  |\nabla\partial_k( u \circ \varphi_j^{-1})|_g\|^2_{L^2(V_j)},
\]
where $\partial_k$ denotes differentiation with respect to local coordinates on $V_j$.
To prove \eqref{H2est}, we write \eqref{glh} in local coordinates on each patch, apply $\pp_k$ to derive an equation for $V_k := \pp_kU^{\ep,1}_w$ of the form
\[
\pp_t V_k - \Delta_g V_k = \mbox{ terms involving $U^{\ep,1}_w, \nabla U^{\ep,1}_w$}.
\]
Multiplying by $\pp_t V_k$, using the fact from Lemma \ref{lem:data} point (2) that $\|\nabla V_k(\cdot, 0) \| \le C_\ep$, integrating by parts, and carrying out rather standard estimates leads to \eqref{H2est}.

It follows from the above estimates and the equation that 
$\| \pp_t U^{\ep,1}_w(\cdot, t)\|_{L^2} \le C_{\ep, \tau}$ for $0< s \le \tau$.
Thus, for $0\le t_{1}< t_{2}\le \tau$ and any $w\in{}W$, we have
\begin{align*}
\| U^{\ep,1}_w(\cdot, t_2) -  U^{\ep,1}_w(\cdot, t_2) \|_{L^2}^2\
&\le(t_{2}-t_{1})\int_{M\times[t_{1},t_{2}]}\!{}\left|\partial_{t}U_{w}^{\varepsilon,1}(x,t)\right|^{2}\vol\ {}dt
\\
&
\le C_{\ep, \tau}(t_{2}-t_{1}).
\end{align*}
Then the interpolation estimate $\|u\|_{H^1} \le C \|u\|_{L^2}^{1/2}\|u\|_{H^2}^{1/2}$
and \eqref{H2est}  imply that
\[
\left\|U_{w}^{\varepsilon,1}(\cdot,t_2)-U_{w}^{\varepsilon,1}(\cdot,t_1)\right\|_{H^1 } \le C_{\ep,\tau} \sqrt{t_2-t_1} \quad\mbox{ for }w\in W, \ 0\le t_1<t_2\le \tau.
\]
Now consider $w_{1},w_{2}\in{}W$. Writing $f_\ep(u) = \frac 1{\ep^2}(1-|u|^2)u$ and using the identity
\[
f_\ep( b) - f_\ep ( a) 
= \int_0^1\frac d{d\sigma} f_\ep(\sigma  b + (1-\sigma) a)d\sigma =\int_0^1f_\ep'(\sigma  b + (1-\sigma) a)d\sigma\  ( b- a)\ ,
\]
we find that  $V:= U_{w_{2}}^{\varepsilon,1}-U_{w_{1}}^{\varepsilon,1}$ satisfies the equation
\[
\pp_tV - \Delta V = g V,
\]
where  $\|g(\cdot, t)\|_{L^\infty} \le C$ for every $t$.  In addition, it follows from Lemma \ref{lem:data} that 
$\| V(\cdot, 0)\|_{H^1} \le C|w_2-w_1|$. Thus carrying out further standard parabolic estimates
(multiplying by $V$ or $\pp_t V$, integrating by parts ...) leads to 
\[
\| U^{\ep,1}_{w_2} (\cdot, t)- U^{\ep,1}_{w_1}(\cdot, t) \|_{H^1} \le C_{\ep,\tau} |w_2-w_1| \qquad
\mbox{ for }0\le t\le \tau.
\]
We conclude that  the map $(t,w)\in[0,\tau]\times{}W\mapsto{}U_{w}^{\varepsilon,1}(\cdot,t)\in{}H^{1}(M;\mathbb{C})$ is continuous, since it is separately uniformly 
continuous in $t$ and $w$.
A more detailed reference for such parabolic estimates on manifolds can be found, e.g. in  appendix A of \cite{mant}.
\end{proof}


\medskip\noindent
{\it Acknowledgements.}  The work of A.C. and R.J. on this project was partly supported 
by the Natural Sciences and Engineering Research Council of Canada under Operating Grant 261955. P.S. gratefully acknowledges that this research was supported by the Fields Institute for Research in Mathematical Sciences and by a Simons Collaboration grant 585520.
The contents of this paper are solely the responsibility of the authors and do not necessarily represent the official views of any of the organizations mentioned above.

\bibliographystyle{acm}
\bibliography{CJS}

\end{document}